\documentclass[12pt,a4paper]{amsart}
\usepackage{amsmath,amsthm,amsfonts,amssymb,mathrsfs}
\usepackage[hypertexnames=false, colorlinks=true, linkcolor = teal, citecolor = blue]{hyperref}

 \usepackage[textwidth=1.8cm]{todonotes}
 \usepackage{marginnote}

\newcommand\R{\mathbb{R}}

\renewcommand{\d}{{{\rm d}}}

\newtheorem{theorem}{Theorem}
\newtheorem{cor}[theorem]{Corollary}

\newtheorem{lemma}[theorem]{Lemma}
\newtheorem{prop}[theorem]{Proposition}
\newtheorem{definition}[theorem]{Definition}

\theoremstyle{remark}
\newtheorem{rem}[theorem]{Remark}

\numberwithin{equation}{section}
\numberwithin{theorem}{section}
\usepackage{mathtools}
\mathtoolsset{showonlyrefs=true}

\title[Aggregation-diffusion equation]{Aggregation--Diffusion Equations with Regular \\
or Repulsive Interactions:
\\ Gaussian and Self-Similar Asymptotics}

\author[G. Karch]{Grzegorz Karch} 
\address[G. Karch]{	
Instytut Matematyczny, Uniwersytet Wroc\l{}awski, pl. Grunwaldzki 2, \hbox{50-384} Wroc\l{}aw, Poland \\ 
\href{https://orcid.org/0000-0001-9390-5578}{orcid.org/0000-0001-9390-5578}}
\email{grzegorz.karch@math.uni.wroc.pl}
\urladdr {http://karch.math.uni.wroc.pl}

\author[Y.~Soga]{Yuri Soga} 
\address[Y. Soga]{	
Mathematical Institute,
Tohoku University,
Sendai 980-8578,
Japan} 
\email{soga.yuri.q6@dc.tohoku.ac.jp}

\subjclass[2020]{Primary 35K55; Secondary 35B40, 35C06}

\keywords{aggregation-diffusion equation, asymptotic behavior, self-similar solutions}
\date{\today}
\begin{document}

\begin{abstract}
We study the large-time behavior of solutions to the aggregation--diffusion equation
$$
u_t=\Delta u+\nabla\cdot\bigl(u\,\nabla K*u\bigr)
\qquad \text{in } \mathbb{R}^d.
$$
Our main results identify a transition, governed by the regularity and singularity of the interaction kernel, between Gaussian and nonlinear self-similar asymptotics. We prove that for regular interaction kernels, as well as for sufficiently mild singular repulsive kernels, the nonlinear drift is asymptotically negligible and solutions exhibit the same Gaussian large-time behavior as solutions of the linear heat equation. In contrast, for the critical logarithmic kernel
$
K(x)=-\log |x|,
$
the interaction persists at the diffusive scale and leads to genuinely nonlinear asymptotics. More precisely, for every mass \(M>0\), we prove the existence and uniqueness of a self-similar solution of mass \(M\) and show that every solution with mass $M$ and with finite second moment converges in all $L^p$-norms to this self-similar solution as \(t\to\infty\).
\end{abstract}

\maketitle
\vspace{-0.7cm}
\tableofcontents

\section{Introduction}

\subsection{Aggregation-diffusion equations} 
We investigate the large-time behavior of solutions to the Cauchy problem
\begin{equation}\label{eq:main}
	\begin{cases}
		u_t =  \Delta u + \nabla \cdot (u \nabla K \ast u),
		& \quad x \in \R^d, \ t > 0,\\
		u(x,0) = u_0(x), & \quad x \in \R^d,
	\end{cases}
\end{equation}
in arbitrary spatial dimensions $d \ge 1$, subject to a non-negative initial condition
\begin{equation}\label{ini:L1Linf}
u_0 \in L^1(\R^d) \cap L^\infty(\R^d), \quad u_0 \ge 0.
\end{equation}
This model describes the evolution of a particle density $u = u(x, t)$ interacting through pairwise potentials, characterized by convolution with a kernel $K : \mathbb{R}^d \to \mathbb{R}$. Such models arise in several areas of science.
In astrophysics, mean-field models for gravitationally attracting particles are often rooted in the Chandrasekhar equation for stellar equilibrium \cite{Chandrasekhar1942,Chavanis1996}. Similarly, in mathematical biology, variations of the Keller--Segel system are utilized to describe population dynamics in phenomena such as chemotaxis, haptotaxis, and angiogenesis \cite{HillenPainter2009}.
For a comprehensive survey of recent results on equation \eqref{eq:main} (involving linear or nonlinear diffusion), we refer to \cite{Carrillo2019,Gomez-Castro24} and the references therein. In the specific case of linear diffusion, equation \eqref{eq:main} is often referred to as the McKean--Vlasov equation, which is associated with a stochastic differential equation via a mean-field limit procedure \cite{Jabin2017,Bresch2019}.

\subsection{Self-similar large-time behavior}

The main goal of this work is to identify a class of kernels $K = K(x)$ in model \eqref{eq:main} such that the corresponding solutions are global-in-time and exhibit self-similar large-time behavior. To set the context for our results on system \eqref{eq:main}--\eqref{ini:L1Linf} as $t \to \infty$, we first recall some classical PDE models where self-similar solutions play a fundamental role in governing long-time asymptotics.

\subsubsection{Heat equation}
The solution of the Cauchy problem for the heat equation is given by
the heat semigroup defined by
\begin{equation}\label{heat semigroup}
e^{t\Delta}u_0(x) = \int_{\mathbb{R}^d} G(x-y, t)u_0(y) \, \mathrm{d}y, 
\end{equation}
where 
\begin{equation}\label{heat kernel}
    G(x,t) = \frac{1}{(4\pi t)^{\frac{d}{2}}} \exp\left(-\frac{|x|^2}{4t}\right)
\end{equation}
is the standard heat kernel. If $u_0 \in L^1(\mathbb{R}^d)$, then the following asymptotic simplification holds true for every $p\in [1,\infty]$
\begin{equation}\label{heat:kernel:asymptotics}
t^{\frac{d}{2}\left(1-\frac{1}{p}\right)} \left\| e^{t\Delta}u_0(\cdot) - \int_{\mathbb{R}^d} u_0(x) \, \mathrm{d}x \, G(\cdot, t) \right\|_p \to 0 \quad\text{as}\quad t\to\infty.
\end{equation} 
Since $\|G(\cdot, t)\|_p=t^{-(d/2)(1-1/p)}\|G(\cdot,1)\|_p$ for all $t>0$,
this is indeed a first-order  asymptotic expansion of the heat semigroup \eqref{heat semigroup}.
The proof of this asymptotic result is based on a  Taylor expansion of the heat kernel $G(x,t)$ and analogous higher-order expansions can be found in the paper \cite{DZ92}.

\subsubsection{Convection-diffusion equation}
Analogous self-similar asymptotics are known for several nonlinear PDEs. We recall the closely related result of Escobedo and Zuazua
 \cite{EZ91}
for the convection-diffusion equation
\begin{equation}\label{eq:conv-diff}
u_t - \Delta u + \overline{a} \cdot \nabla (|u|^{q-1}u) = 0, \qquad x \in \mathbb{R}^d, \quad t > 0,
\end{equation}
where  $\overline{a} \in \mathbb{R}^d \setminus \{0\}$ is constant and 
$u(\cdot, 0)=u_0\in L^1(\R^d)$.
The result by Escobedo and Zuazua can be summarized as follows:
\begin{itemize}
    \item If $q > 1 + 1/d$, the solution to the Cauchy problem for equation \eqref{eq:conv-diff} asymptotically behaves like the heat kernel \eqref{heat kernel}:
\begin{equation}
    t^{\frac{d}{2}(1-\frac{1}{p})} \|u(\cdot, t) - M G(\cdot, t)\|_p \to 0 \quad \text{as } t \to \infty
\end{equation}
for every $p \in [1,\infty]$, where $M = \int_{\mathbb{R}^d} u_0(x) \, \mathrm{d}x$.

    \item If $q = 1 + 1/d$, there exists a self-similar solution $U_M(x,t)$ to equation \eqref{eq:conv-diff} such that
    $\int_{\R^d}U_M(x,t)\, \d x=M$ and 
\begin{equation}\label{conv-diff-self}
    t^{\frac{d}{2}(1-\frac{1}{p})} \|u(\cdot, t) - U_M(\cdot, t)\|_p \to 0 \quad \text{as } t \to \infty
\end{equation}
for every $p \in [1,\infty]$.
\end{itemize}

\subsubsection{Aggregation equation in one dimension}
Inspired by the asymptotic results for the con\-vec\-tion-diffusion equation \eqref{eq:conv-diff} mentioned above, we aim to obtain similar asymptotic expansions for the aggregation-diffusion model \eqref{eq:main}. The first step in this direction was taken by the authors of \cite{KS10-spikes}, who investigated the large-time behavior of solutions to the one-dimensional counterpart of problem \eqref{eq:main}:
\begin{equation} \label{eq:1.1}
\begin{cases}
    u_t = u_{xx} - (u(K' * u))_x &\quad \text{for } x \in \mathbb{R}, \; t > 0, \\
    u(x, 0) = u_0(x) &\quad \text{for } x \in \mathbb{R},
\end{cases}
\end{equation}
where $K' \in L^1(\mathbb{R})$ 
(notice that $K'$ in the paper \cite{KS10-spikes} corresponds to $-\partial_xK$ in this work)
and $u_0 \in L^1(\mathbb{R})$ is assumed to be nonnegative.
The primary result of \cite{KS10-spikes} establishes that solutions $u(x,t)$ to problem \eqref{eq:1.1} develop a self-similar asymptotic profile as $t \to \infty$. This profile depends entirely on the integral of the kernel $K'(x)$ and there are two distinct asymptotic regimes:
\begin{itemize}
    \item 
If $\int_{\mathbb{R}} K'(x) \, \mathrm{d}x = 0$, the solution to problem \eqref{eq:1.1} asymptotically behaves like the heat kernel \eqref{heat kernel}: for every $p \in [1, \infty]$, 
\begin{equation}
    t^{\frac{1}{2}(1-\frac{1}{p})} \|u(\cdot, t) - M {G}(\cdot, t)\|_p \to 0 \quad \text{as } t \to \infty, \label{eq:2.5}
\end{equation}
  where $M = \int_{\mathbb{R}} u_0(x) \, \mathrm{d}x$.

\item On the other hand, if $A \equiv \int_{\mathbb{R}} K'(x) \, \mathrm{d}x \neq 0$,  the solution behaves like a nonlinear diffusion wave. Namely, for every $p \in [1, \infty]$, 
\begin{equation}\label{selfsimliar-one-dimensional}
    t^{\frac{1}{2}(1-\frac{1}{p})} \|u(\cdot, t) - {U}_{M,A}(\cdot, t)\|_p \to 0 \quad \text{as } t \to \infty. 
\end{equation}
Here, ${U}_{M,A}(x,t) = t^{-1/2}{U}_{M,A}\left({x}/{{t^{1/2}}}, 1\right)$ is the unique self-similar solution to the initial value problem for the viscous Burgers equation
\begin{equation}
\begin{cases}
    U_t = U_{xx} - A(U^2)_x, &\quad \text{for } x \in \mathbb{R}, \; t > 0, \\
    U(x, 0) = M \delta_0, &
    \end{cases}
\end{equation}
where $\delta_0$ denotes the Dirac measure.
\end{itemize}

\subsubsection{Brief summary of results from this work}
Because the asymptotics of solutions to model \eqref{eq:main}--\eqref{ini:L1Linf} in one spatial dimension were described in \cite{KS10-spikes} (as noted above), in what follows, we systematically analyze this problem assuming the spatial dimension is $d \ge 2$.
We summarize the results here and state them precisely in the next section.
\begin{itemize}
    \item 
If the interaction kernel $K = K(x)$ in problem \eqref{eq:main}--\eqref{ini:L1Linf} is a regular function, meaning that both $K$ and $\nabla K$ are bounded and decay sufficiently fast as $|x| \to \infty$, then the self-similar large-time asymptotics of solutions to problem \eqref{eq:main}--\eqref{ini:L1Linf} are given by the heat kernel \eqref{heat kernel} (see Theorem \ref{thm:regular:asymptotics:intro}).

\item
The Gaussian asymptotics (as in formula \eqref{heat:kernel:asymptotics}) also appear in the case of solutions to problem \eqref{eq:main}--\eqref{ini:L1Linf} with singular, but not too singular, kernels. Here, the repulsive Riesz kernel 
$K(x) = A|x|^{-k}$ with $A>0$ and $k \in (0, d-2)$ serves as a canonical example, see Theorem \ref{thm:heat:kernel:intro}
and Remark \ref{examples}.

\item
Finally, we consider the repulsive kernel with the critical singularity $K(x) =- \log|x|$, for which the asymptotics are essentially different. In this case, equation \eqref{eq:main} has a class of self-similar solutions that describe the large-time behavior of other solutions to the Cauchy problem \eqref{eq:main}--\eqref{ini:L1Linf}, analogous to the critical cases for the convection-diffusion equation \eqref{conv-diff-self} and the one-dimensional aggregation-diffusion equation \eqref{selfsimliar-one-dimensional}. We refer to Theorem \ref{main thm:self-similar} and the comments following it for a detailed presentation of this result.
\end{itemize}


\section{Results of this work}
We begin the description of our results by noting that the Cauchy problem \eqref{eq:main}--\eqref{ini:L1Linf} has 
a local-in-time solution for a broad class of kernels
 satisfying $\nabla K \in L^1(\mathbb{R}^d) + L^\infty(\mathbb{R}^d)$ (see Proposition \ref{prop:local:exist} and Remark \ref{rem:L1Linfty}, below). We discuss this in Section~\ref{sec:preliminary}, where we also show that this solution is nonnegative, conserves mass 
\begin{equation}\label{mass:intro}
    M \equiv \int_{\mathbb{R}^d} u(x,t) \, \mathrm{d}x = \int_{\mathbb{R}^d} u_0(x) \, \mathrm{d}x \quad \text{for all } t\geq 0,
\end{equation}
and possesses sufficient regularity for the subsequent calculations. In fact, due to additional assumptions we impose on the kernel $K = K(x)$, these solutions exist for all $t \ge 0$ by a standard continuation argument combined with the $L^p$-estimates obtained below.

In Section \ref{sec:Lp:decay}, we estimate the $L^p$-norms of the solutions. We consider two types of kernels: either regular kernels in Theorem \ref{thm:regular:decay} or so-called repulsive kernels in Theorem~\ref{thm:decay:repulsive}. In both cases, we show that the $L^p$-norms of the solutions to problem \eqref{eq:main} with these kernels decay in time at the same rate as the solution 
\eqref{heat semigroup} to the Cauchy problem for the heat equation. These decay estimates play a crucial role in our study of the self-similar asymptotics of solutions to the Cauchy problem \eqref{eq:main}.


\subsection{Heat kernel asymptotics for regular kernels}
Our first two results concern the asymptotic simplification of solutions to the nonlinear Cauchy problem \eqref{eq:main} and their large-time behavior as characterized by the heat kernel. We begin by analyzing this model in the context of regular kernels.

\begin{theorem} \label{thm:regular:asymptotics:intro}
Let $d \ge 1$ and $M>0$. Assume that the interaction kernel $K$ satisfies 
\begin{equation}
    \nabla K(-x) = -\nabla K(x) \qquad \text{for all } x \in \mathbb{R}^d,
\end{equation}
and 
\begin{equation}
    \sup_{x \in \mathbb{R}^d} |x| \, |\nabla K(x)| < \infty.
\end{equation}
In addition, assume that one of the following conditions holds:
\begin{itemize}
    \item if $d = 1$, then $\nabla K \in L^1(\R^d)$.
    \item if $d \ge 2$, then $K \in W^{1,\infty}(\R^d),\quad \nabla K \in L^d(\R^d)$.
\end{itemize}
Let the initial datum $u_0 \in L^1(\mathbb{R}^d)\cap L^\infty(\mathbb{R}^d)$ be nonnegative and satisfy
\begin{equation*}
    \int_{\mathbb{R}^d} u_0(x) \, \mathrm{d}x = M, \quad
    \int_{\mathbb{R}^d} |x|^2 u_0(x) \, \mathrm{d}x < \infty, \quad \text{and} \quad
    \int_{\mathbb{R}^d} u_0(x)\log u_0(x) \, \mathrm{d}x < \infty.
\end{equation*}
Then, for every $p \in [1,\infty]$, the solution $u(x,t)$ to problem \eqref{eq:main} with mass $M=\|u_0\|_1$ satisfies the following large-time asymptotic behavior:
\begin{equation}
    t^{\frac{d}{2}\left(1-\frac{1}{p}\right)} \|u(t) - MG(t)\|_p \to 0 \qquad \text{as } t \to \infty.
\end{equation}
\end{theorem}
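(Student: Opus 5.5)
We will write the solution through the Duhamel formula,
\begin{equation*}
u(t)=e^{t\Delta}u_0+\int_0^t \nabla e^{(t-s)\Delta}\cdot\bigl(u\,\nabla K*u\bigr)(s)\,\d s ,
\end{equation*}
and use \eqref{heat:kernel:asymptotics} for the linear part. Everything then reduces to showing that the Duhamel term $I(t)$ satisfies $t^{\frac d2(1-\frac1p)}\|I(t)\|_p\to0$. We take as input the decay estimate of Theorem~\ref{thm:regular:decay}, namely $\|u(t)\|_p\le C(1+t)^{-\frac d2(1-\frac1p)}$ for all $p\in[1,\infty]$. The finite second moment and finite entropy are the hypotheses under which that theorem supplies global existence and these estimates.

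\textbf{Step 1: decay of the drift.} The key quantity is $\|\nabla K*u(s)\|_\infty$. Since $|\nabla K(x)|\le C|x|^{-1}$ and $\nabla K\in L^\infty$, we have $|\nabla K(x)|\le C\min(1,|x|^{-1})$.
\begin{itemize}
\item For $d\ge2$, split $\nabla K=\nabla K\,\mathbf 1_{|x|\le R}+\nabla K\,\mathbf 1_{|x|>R}$. The first piece lies in $L^{q}$ for every $q<d$, and the second lies in $L^{r}$ for every $r>d$. Young's inequality combined with the $L^p$ decay of $u$ then gives $\|\nabla K*u(s)\|_\infty\le C(1+s)^{-1/2-\varepsilon}$ for some $\varepsilon>0$.
\item Alternatively, in the case where only $\nabla K\in L^d$ is used, Hölder gives $\|\nabla K*u\|_\infty\le\|\nabla K\|_d\|u\|_{d'}\le C(1+s)^{-1/2}$. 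This exponent is exactly critical.
\item For $d=1$ with $\nabla K\in L^1$, we get $\|\nabla K*u\|_\infty\le\|\nabla K\|_1\|u\|_\infty\le C(1+s)^{-1/2}$, again critical.
\end{itemize}
So in the critical cases a plain Gronwall-type bound gives only boundedness of $t^{\frac d2(1-\frac1p)}\|I(t)\|_p$, not decay. The main obstacle is to gain the extra $o(1)$.

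\textbf{Step 2: the $o(1)$ gain, using oddness of $\nabla K$.} For $d\ge2$ we will try first to show $\|\nabla K*u(s)\|_\infty=o(s^{-1/2})$. Fix $R$ and split into $|y|\le R$ and $|y|>R$.
\begin{itemize}
\item The near part is bounded by $\|\nabla K\|_{L^{q}(B_R)}\|u\|_{q'}$ with $q<d$. Since $q'>d'$, this decays faster than $s^{-1/2}$.
\item The far part is bounded by $\|\nabla K\,\mathbf 1_{|x|>R}\|_d\|u\|_{d'}$. Its $L^d$ coefficient tends to $0$ as $R\to\infty$ because $\nabla K\in L^d$.
\end{itemize}
Letting $s\to\infty$ and then $R\to\infty$ gives the $o(s^{-1/2})$ bound.

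For $d=1$ this trick is unavailable, and we instead use the oddness hypothesis: $\int\nabla K=0$. Write $\nabla K*u(s)$ in self-similar variables, $u(x,s)=s^{-1/2}v(x s^{-1/2},s)$. Then
\begin{equation*}
\nabla K*u(x,s)=s^{-1/2}\int\nabla K(z)\,v\bigl((x-z)s^{-1/2},s\bigr)\,\d z .
\end{equation*}
Because $\int\nabla K=0$, we may subtract $v(xs^{-1/2},s)$ inside the integral. What remains is controlled by the modulus of continuity of $v$ at scale $|z|s^{-1/2}\to0$, weighted by $|\nabla K(z)|\in L^1$. The equicontinuity of $v$ we would obtain from a bound on $\|\partial_x u(s)\|_\infty\lesssim s^{-1}$, derived from the Duhamel formula.

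\textbf{Step 3: conclusion.} Estimate
\begin{equation*}
\|I(t)\|_p\le\int_0^t\|\nabla e^{(t-s)\Delta}\|_{p\to p}\,\|u(s)\|_p\,\|\nabla K*u(s)\|_\infty\,\d s ,
\end{equation*}
splitting the integral at $s=t/2$. On $[t/2,t]$ the integrand is $(t-s)^{-1/2}\,t^{-\frac d2(1-\frac1p)}\,o(t^{-1/2})$, which integrates to $o\bigl(t^{-\frac d2(1-\frac1p)}\bigr)$. On $[0,t/2]$ we move the heat-semigroup decay to the $L^1\to L^p$ bound $(t-s)^{-\frac d2(1-\frac1p)-\frac12}$ and use $\|u\,\nabla K*u\|_1\le M\,o(s^{-1/2})$. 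After the change of variables $s=t\sigma$, dominated convergence gives the $o$-rate, since $\int_0^{1/2}\sigma^{-1/2}\,\d\sigma<\infty$ and the small-$s$ region contributes $O(t^{-1/2})$ relative to the main rate. Combining this with \eqref{heat:kernel:asymptotics} yields the claim for all $p\in[1,\infty]$.
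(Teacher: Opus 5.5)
For $d\ge 2$ your argument is essentially the paper's. The proof of Theorem~\ref{thm:regular:asymptotics:intro} inserts the decay estimates of Theorem~\ref{thm:regular:decay} into the Duhamel formula split at $t/2$ (Lemma~\ref{lem:nonlinear-term:heat}, Case~2), after writing $\nabla K=k_1+k_2$ with $k_1\in C_c^\infty(\mathbb{R}^d)$ and $\|k_2\|_d\le\varepsilon$. Your cut-off at radius $R$ is the same device: a piece with better integrability plus an $L^d$-small tail. Collecting it into the single bound $\|\nabla K*u(s)\|_\infty=o(s^{-1/2})$ is a tidy way to treat all $p\in[1,\infty]$ at once.

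You should discard the first bullet of your Step~1, however. The hypotheses only place the tail $\nabla K\,\mathbf{1}_{\{|x|>R\}}$ in $L^r$ for $r\ge d$. Young's inequality then gives at best $\|u(s)\|_{r'}\lesssim s^{-d/(2r)}$, which is not faster than $s^{-1/2}$, so the claimed rate $s^{-1/2-\varepsilon}$ is unjustified. That is precisely why your Step~2 is needed, and Step~2 is correct.

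For $d=1$ the paper gives no argument of its own. Lemma~\ref{lem:nonlinear-term:heat} is stated for $d\ge 2$, and the remark following it defers the case $\int_{\mathbb{R}}\nabla K\,\mathrm{d}x=0$ to \cite{KS10-spikes}. Your cancellation argument fills this in, and it needs no self-similar variables:
$|\nabla K*u(x,s)|\le\int_{\mathbb{R}}|\nabla K(z)|\min\{2\|u(s)\|_\infty,\,|z|\,\|\partial_x u(s)\|_\infty\}\,\mathrm{d}z$.
This is $o(s^{-1/2})$ by dominated convergence once $\|\partial_x u(s)\|_\infty\le Cs^{-1}$.

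That gradient bound is the one step you leave unproved, and it needs some care. Bootstrapping $\sup_s s\|\partial_x u(s)\|_\infty$ through the Duhamel formula on $[t/2,t]$ does not close in general. The coefficient that appears, of size $\|\nabla K\|_1\sup_s s^{1/2}\|u(s)\|_\infty$, is scale invariant and need not be small. Instead, proceed as follows.
\begin{itemize}
\item Write the Duhamel formula starting at $t_0$ and set $\tilde g(\theta)=t_0\|\partial_x u(t_0+t_0\theta)\|_\infty$.
\item Use $\|\partial_x(u\,\nabla K*u)\|_\infty\le2\|\nabla K\|_1\|u\|_\infty\|\partial_x u\|_\infty$ and $\|u\|_\infty\lesssim t_0^{-1/2}$ on $[t_0,2t_0]$ to get $\tilde g(\theta)\le C\theta^{-1/2}+C\int_0^\theta(\theta-\rho)^{-1/2}\tilde g(\rho)\,\mathrm{d}\rho$ on $(0,1]$, with $C$ independent of $t_0$.
\item Apply the singular Gronwall inequality, as in Step~2 of the proof of Proposition~\ref{prop:critical:optimal:decay}.
\end{itemize}
This gives $\|\partial_x u(2t_0)\|_\infty\le Ct_0^{-1}$ without any smallness assumption.
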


The self-similar heat kernel asymptotics of the aggregation-diffusion equation \eqref{eq:main} with regular kernels have been extensively studied \cite{KS10-spikes, CCS12, Carrillo-similification}. Notably, the work of Carrillo \textit{et al.}~\cite{Carrillo-similification} is particularly significant, as their results do not require smallness assumptions on either the initial data or the interaction kernel. While the proof of \cite[Theorem 1.2]{Carrillo-similification} utilizes entropy methods and relies on the relatively restrictive assumption that $\Delta K \in L^{\frac{d}{2}}(\mathbb{R}^d)$, we provide a simpler argument in Theorem \ref{thm:regular:asymptotics:intro} to establish these asymptotics. 
Our argument combines free-energy and relative-entropy bounds with $L^p$ estimates and the Duhamel formula; it does not impose an integrability assumption on $\Delta K$.
Moreover, in contrast to the result of \cite[Theorem 2.3]{KS10-spikes} in the one-dimensional case, our assumptions do not impose a smallness condition on the initial datum or the kernel.

\subsection{Heat kernel asymptotics for repulsive kernels}
Our second result shows that the asymptotic simplification of solutions toward the heat kernel can also be achieved for certain singular kernels. These singular kernels can be decomposed into the sum of a ``repulsive'' part and a part that is small in the Marcinkiewicz space $L^{d,\infty}(\mathbb{R}^d)$. We begin with a definition of the repulsive kernels used in this work.

\begin{definition}\label{def:repulsive kernel}
A kernel $K=K(x)$ is called repulsive if $\Delta K\le 0$ in the sense of distributions.
\end{definition}

Note that this definition in fact corresponds to 
 \emph{superharmonicity} condition and gives the favorable
sign in the $L^p$ energy identity. It is not, for arbitrary nonradial
kernels, equivalent to the physical condition $x\cdot\nabla K(x)\le0$.
The shifted point-source examples below are valid under the stated
definition, although they need not be even. 

The Cauchy problem \eqref{eq:main} with a repulsive, moderately singular kernel also possesses solutions whose large-time behavior is described by the heat kernel.

\begin{theorem} \label{thm:heat:kernel:intro}
Let $d\geq 2$ and $M>0$.
Let $u = u(x,t)$ be a nonnegative solution to the Cauchy problem \eqref{eq:main} with initial datum $u_0 \in L^1(\mathbb{R}^d) \cap L^\infty(\mathbb{R}^d)$, with  $M = \|u_0\|_1$.
Suppose that the kernel $K$ admits a decomposition $K = K_1 + K_2$ with the following properties 
\begin{itemize}
    \item $K_1$ is repulsive, i.e. $\Delta K_1 \le 0$ in the sense of distributions; moreover,  
    either  $\nabla K_1 \in L^{r,\infty}(\mathbb{R}^d)$ for some $r \in (1, d)$ or  $\nabla K_1 \in L^{d}(\mathbb{R}^d)$;
    \item $\nabla K_2 \in L^{d}(\mathbb{R}^d)$.
\end{itemize}
There exists a constant $D_d>0$, depending only on $d$, such that,
if $\|\nabla K_2\|_{d}\,M\le D_d$, then for every $p\in[1,\infty]$, 
\begin{equation}
t^{\frac{d}{2}\left(1-\frac{1}{p}\right)} \|u(t) - M G(t)\|_p \to 0 \qquad \text{as } t \to \infty.
\end{equation}
\end{theorem}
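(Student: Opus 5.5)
We will follow the Duhamel-formula strategy used for convection--diffusion equations, with the $L^p$-decay estimate of Theorem~\ref{thm:decay:repulsive} as the key a priori bound.

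\textbf{Step 1: decay.} From Theorem~\ref{thm:decay:repulsive} (and the smallness $\|\nabla K_2\|_d M\le D_d$, which is where the constant $D_d$ enters), we take for granted that the solution is global and satisfies
\[
\|u(t)\|_p\le C(1+t)^{-\frac d2(1-\frac1p)}\qquad\text{for all } p\in[1,\infty],\ t>0 .
\]
The mechanism behind this is the $L^p$ energy identity
\[
\frac{\d}{\d t}\int u^p=-\frac{4(p-1)}{p}\int|\nabla u^{p/2}|^2-(p-1)\int u^p\,\Delta K*u .
\]
In it, the $K_1$ part has the favorable sign because $\Delta K_1\le0$. The $K_2$ part is integrated back by parts and bounded via H\"older and Sobolev by $C\|\nabla K_2\|_d\|u\|_1\|\nabla u^{p/2}\|_2^2$, which is absorbed by the dissipation when $M\|\nabla K_2\|_d$ is small. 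A Moser/Nash iteration then gives the heat decay rate.

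\textbf{Step 2: Duhamel.} Write
\[
u(t)=e^{t\Delta}u_0+\int_0^t\nabla e^{(t-s)\Delta}\cdot\bigl(u\nabla K*u\bigr)(s)\,\d s .
\]
By \eqref{heat:kernel:asymptotics}, $t^{\frac d2(1-\frac1p)}\|e^{t\Delta}u_0-MG(t)\|_p\to0$. It therefore suffices to show that the Duhamel term $I(t)$ satisfies $t^{\frac d2(1-\frac1p)}\|I(t)\|_p\to0$. We first treat $p=1$; the case $p=\infty$ then follows from a splitting at $t/2$ together with the uniform $L^\infty$ decay, and intermediate $p$ follow by interpolation.

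\textbf{Step 3: the $K_2$ part.} Using $\|\nabla e^{\tau\Delta}f\|_1\le C\tau^{-1/2}\|f\|_1$ and the Hardy--Littlewood--Sobolev/Young estimate
\[
\|u\,\nabla K_2*u\|_1\le\|u\|_{q}\,\|\nabla K_2*u\|_{q'}\le C\|\nabla K_2\|_d\|u\|_{q}\|u\|_{r},
\]
with exponents chosen so that the total decay is $s^{-1/2}$ (the $L^d$ scaling is exactly critical), the integrand is of order $(t-s)^{-1/2}s^{-1/2}$. This is merely bounded rather than decaying, which is the main obstacle.

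\textbf{Step 4: handling criticality.} We split $\nabla K_2=\phi_\varepsilon+\psi_\varepsilon$ with $\|\phi_\varepsilon\|_d<\varepsilon$ and $\psi_\varepsilon\in L^{d-\eta}\cap L^{d+\eta}$ compactly supported, or even bounded with compact support.
\begin{itemize}
\item The $\phi_\varepsilon$ contribution is $O(\varepsilon)$ uniformly in $t$.
\item The $\psi_\varepsilon$ contribution is subcritical: with $\psi_\varepsilon\in L^{q}$, $q<d$, Young's inequality gives extra decay $s^{-1/2-\delta}$. Hence
\[
\int_0^t (t-s)^{-1/2}\min(1,s^{-1/2-\delta})\,\d s\to0 .
\]
\end{itemize}
Letting $t\to\infty$ and then $\varepsilon\to0$ gives $\|I_2(t)\|_1\to0$.

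\textbf{Step 5: the $K_1$ part.} If $\nabla K_1\in L^d$, the same argument as in Step~4 applies. If $\nabla K_1\in L^{r,\infty}$ with $r<d$, weak Young (HLS) gives
\[
\|\nabla K_1*u\|_\infty\lesssim\|\nabla K_1\|_{r,\infty}\,\|u\|_{r',1}\lesssim s^{-\frac d{2r}} .
\]
We interpolate to Lorentz norms, which is fine given decay in all $L^p$. Then $\|u\nabla K_1*u\|_1\lesssim s^{-d/(2r)}$ with $d/(2r)>1/2$, so this term is subcritical and its contribution decays directly; near $s=0$ we use boundedness of the solution.

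Combining Steps~2--5 yields the claimed convergence for all $p\in[1,\infty]$. The delicate points are the absorption in Step~1, which fixes $D_d$, and the approximation argument in Step~4 at the critical $L^d$ scaling.
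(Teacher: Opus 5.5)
Your proposal is correct and is essentially the paper's argument. It combines Duhamel's formula, the decay bounds of Theorem~\ref{thm:decay:repulsive} (applicable since $\|\nabla K_2\|_{L^{d,\infty}}\le\|\nabla K_2\|_d$), an $\varepsilon$-approximation of the critical $L^d$ kernel by a compactly supported one, and weak Young for the $L^{r,\infty}$ part; you are in fact more careful than the paper in treating $\nabla K_1$ and $\nabla K_2$ separately.

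The one step to tighten is $p=\infty$. Theorem~\ref{thm:decay:repulsive} gives decay only for finite exponents, and on $[t/2,t]$ the critical $\nabla K_2$ piece contributes only $O(t^{-d/2})$, not $o(t^{-d/2})$. So there you should reapply your Step~4 splitting together with $\|\nabla e^{\tau\Delta}f\|_\infty\le C\tau^{-\frac12-\frac{d}{2q}}\|f\|_q$ for a finite $q>d$, as the paper does. Two minor remarks on your Step~1 sketch: the interaction term in the energy identity should be $+(p-1)\int u^p\,\Delta K*u$, and a fixed $D_d$ absorbs the $K_2$ term only for bounded $p$, which is why the paper absorbs at $p=2$ and then iterates dyadically.
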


In other words, Theorem \ref{thm:heat:kernel:intro} states that solutions to the Cauchy problem \eqref{eq:main}--\eqref{ini:L1Linf} with a singular (but not too singular) repulsive kernel perturbed by a small regular part (from $L^d(\mathbb{R}^d)$) 
have Gaussian asymptotics as $t\to\infty$.

\begin{rem} \label{examples}
The assumptions in Theorem \ref{thm:heat:kernel:intro} cover several standard singular and nonsingular interaction kernels. Below, we list some examples satisfying these assumptions.

\begin{itemize}
\item \emph{Repulsive Riesz kernels.}
Let $d\ge 3$ and
\begin{equation}\label{repulsive Riesz}
    K_1(x)=A|x|^{-\alpha},
    \qquad
    A>0,\qquad 0<\alpha<d-2.
\end{equation}
Then
\[
    \Delta K_1(x)
    =
    -A\alpha(d-2-\alpha)|x|^{-\alpha-2}
    \le 0
\]
in the sense of distributions. Since
$|\nabla K_1(x)| \le A\alpha |x|^{-\alpha-1}$,
we immediately obtain 
$\nabla K_1 \in L^{\frac{d}{\alpha+1},\infty}(\mathbb{R}^d)$
with $1 < {d}/(\alpha+1) < d$.
The endpoint $\alpha=d-2$ gives the Newtonian kernel
\(
    K_1(x)=A|x|^{2-d}.
\)
In this case,
\[
    \Delta K_1=-c_dA\delta_0\le 0,
    \qquad
    \nabla K_1 \in L^{\frac{d}{d-1},\infty}(\mathbb{R}^d).
\]

\item \emph{Potentials generated by nonnegative measures.}
Let $d\ge 3$, let $\Phi_d$ be the fundamental solution of
$-\Delta$, and let $\mu$ be a finite nonnegative Radon measure.
Then $K_1=\Phi_d*\mu$ satisfies
\[
    \Delta K_1=-\mu\le 0.
\]
The weak-type estimate for the Riesz potential gives
$\nabla K_1 \in L^{\frac{d}{d-1},\infty}(\mathbb{R}^d)$.
For example, one may take
\[
    K_1(x)=\sum_{j=1}^N a_j\Phi_d(x-x_j),
    \qquad a_j\ge 0.
\]
This produces a repulsive kernel with finitely many point singularities.

\item \emph{A smooth repulsive kernel.}
For $d\ge 3$, consider
\[
    K_1(x)
    =
    A(1+|x|^2)^{-\frac{d-2}{2}},
    \qquad A>0.
\]
A direct calculation gives
$$
\Delta K_1(x)=-A d(d-2)(1+|x|^2)^{-(d+2)/2}\leq 0.
$$
Furthermore,
\[
    |\nabla K_1(x)|
    =
    A(d-2)|x|(1+|x|^2)^{-\frac{d}{2}},
\]
so that $\nabla K_1\in L^d(\mathbb{R}^d)$.
\end{itemize}
\end{rem}


\subsection{Critical repulsive case and self-similar solutions}
Finally, we consider the Cauchy problem \eqref{eq:main} in $\mathbb{R}^d$ for $d \ge 2$ with the kernel
obtained from the normalized limit
\[
  \lim_{\alpha\downarrow0}\frac{|x|^{-\alpha}-1}{\alpha}
  =-\log|x|,\qquad x\ne0,
\]
of the repulsive Riesz potential \eqref{repulsive Riesz}:
\begin{equation}
    K = -\log |x| \quad \text{satisfying} \quad \nabla K(x) = -\frac{x}{|x|^2} \in L^{d,\infty}(\mathbb{R}^d) \setminus L^d(\mathbb{R}^d).
\end{equation}
This kernel is excluded from the assumptions of Theorem~\ref{thm:heat:kernel:intro}, and the large-time behavior of solutions is essentially different. Here, in contrast to the subcritical case, the interaction term remains present at the diffusive scale, and the large-time asymptotic profile is expected to be a nonlinear self-similar solution rather than the heat kernel.

The gradient $\nabla K$ is homogeneous of degree $-1$. Consequently, a direct calculation (see also Remark~\ref{rem:scaling}) shows that equation \eqref{eq:main} is invariant under the parabolic rescaling
\begin{align}\label{def:rescaled-solution:0}
u_\lambda(x,t) := \lambda^d u(\lambda x, \lambda^2 t), \qquad x \in \mathbb{R}^d, \ \ t > 0,
\end{align}
for every $\lambda > 0$. That is, if $u$ is a solution, then $u_\lambda$ is a solution as well. As the third main result of this work, we show that equation \eqref{eq:main} admits a family of self-similar solutions -- that is, solutions invariant under \eqref{def:rescaled-solution:0} -- which govern the large-time behavior of general solutions.


\begin{theorem} \label{main thm:self-similar}
Let $d\geq 2$ and $M>0$. Assume that $\int_{\R^d}u_0(x)\,\d x=M$.
Let $u = u(x,t)$ be a non-negative solution to the Cauchy problem \eqref{eq:main} with kernel $K = -\log |x|$ and non-negative initial datum $u_0 \in L^1(\mathbb{R}^d) \cap L^\infty(\mathbb{R}^d)$ satisfying the finite second moment condition
$
\mu_2(0) := \int_{\mathbb{R}^d} |x|^2 u_0(x) \, \mathrm{d}x < \infty.
$
Then, for every $p \in [1, \infty]$, we have
\begin{align*}
t^{\frac{d}{2}\left(1-\frac{1}{p}\right)} \|u(t) - U_M(t)\|_{p} \to 0 \qquad \text{as } t \to \infty,
\end{align*}
where $U_M(x, t)$ is the unique self-similar solution of the equation in \eqref{eq:main} of the following form
\[
U_M(x,t) = t^{-\frac{d}{2}} \Phi_M\left(\frac{x}{\sqrt{t}}\right),
\]
where the profile $\Phi_M$ is a solution with mass $M$ to the following elliptic equation:
\begin{align} \label{main thm: stationary equation for W:0}
\Delta \Phi + \frac{1}{2} \nabla \cdot (y \Phi) + \nabla \cdot (\Phi \nabla K \ast \Phi) = 0 \quad \text{in } \mathbb{R}^d.
\end{align}
\end{theorem}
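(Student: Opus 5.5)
We work in self-similar variables. Setting
\[
u(x,t)=(1+t)^{-\frac d2}\,v\Bigl(\tfrac{x}{\sqrt{1+t}},\,\log(1+t)\Bigr),
\]
and using that $\nabla K$ is homogeneous of degree $-1$ (the scaling \eqref{def:rescaled-solution:0}), the function $v(y,s)$ solves the autonomous nonlocal Fokker--Planck equation
\[
v_s=\Delta v+\tfrac12\nabla\cdot(yv)+\nabla\cdot(v\nabla K*v).
\]
The stationary states of this equation are exactly the profiles solving \eqref{main thm: stationary equation for W:0}. The claimed $L^p$ convergence with weight $t^{\frac d2(1-\frac1p)}$ is then equivalent to $\|v(s)-\Phi_M\|_p\to0$ as $s\to\infty$; the shift $1+t$ versus $t$ is harmless, since $\|U_M(t)-U_M(1+t)\|_p=o(t^{-\frac d2(1-\frac1p)})$ by continuity of dilations in $L^p$.

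The equation has a natural Lyapunov functional (the free energy)
\[
\mathcal F[v]=\int v\log v+\tfrac14\int|y|^2v+\tfrac12\iint v(y)\,(-\log|y-z|)\,v(z)\,\d y\,\d z,
\]
whose dissipation is $\int v|\nabla(\log v+\tfrac{|y|^2}{4}+K*v)|^2$.

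\textbf{Step 1 (bounds on $v$).} We first derive uniform-in-$s$ bounds. The second moment satisfies
\[
\tfrac{\d}{\d s}\int|y|^2v=2dM-\int|y|^2v+M^2,
\]
because the symmetrized interaction gives $\iint v v\,(y-z)\cdot(y-z)/|y-z|^2=M^2$. Hence $\int|y|^2 v$ stays bounded, and in fact converges to $2dM+M^2$. Since $\Delta K=-(d-2)|x|^{-2}\le0$ in the distributional sense (with a $-2\pi\delta$ in $d=2$), $K$ is repulsive, and the decay estimates of Section~\ref{sec:Lp:decay} (Theorem~\ref{thm:decay:repulsive}) give $\|u(t)\|_p\le Ct^{-\frac d2(1-\frac1p)}$. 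This translates into uniform bounds on $\|v(s)\|_\infty$. Parabolic regularity then yields Hölder bounds, and together with the moment bound this gives relative compactness of $\{v(s)\}_{s\ge1}$ in every $L^p$, $1\le p<\infty$; the case $p=\infty$ follows by interpolation with Hölder bounds. The logarithmic interaction term in $\mathcal F$ is bounded below using $\log|y-z|\le\log(1+|y|)+\log(1+|z|)$ together with the moment bound, so $\mathcal F$ is bounded below along the flow.

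\textbf{Step 2 (existence and uniqueness of $\Phi_M$).} Existence can be obtained by minimizing $\mathcal F$ over probability-type densities of mass $M$, or alternatively from the $\omega$-limit set of Step 1. For uniqueness, the key observation is that $\mathcal F$ is displacement convex in the Wasserstein sense: the confinement $|y|^2/4$ is $\tfrac12$-uniformly convex, the entropy is displacement convex, and $-\log|y|$ is convex along lines because $x\mapsto-\log|x|$ has Hessian $(2\hat x\otimes\hat x-I)/|x|^2$. Since this Hessian is not positive semidefinite for $d\ge3$, I expect to use instead the known fact that for repulsive logarithmic interaction the interaction energy is convex along interpolations in the radial/one-dimensional reduction, or else a direct argument. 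Concretely, any stationary solution satisfies $\log\Phi+|y|^2/4+K*\Phi=\mathrm{const}$. Taking two such profiles $\Phi_1,\Phi_2$, subtracting the equations, and testing against $\Phi_1-\Phi_2$ gives
\[
\int(\log\Phi_1-\log\Phi_2)(\Phi_1-\Phi_2)+\int(\Phi_1-\Phi_2)\,K*(\Phi_1-\Phi_2)=0.
\]
The kernel $-\log|x|$ is positive definite on mass-zero differences (its Fourier transform is $c|\xi|^{-d}>0$). Both terms are therefore nonnegative, which forces $\Phi_1=\Phi_2$. This flat-convexity argument avoids displacement convexity entirely and is the route I will take.

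\textbf{Step 3 (convergence).} The function $\mathcal F[v(s)]$ is nonincreasing and bounded below, so the dissipation is integrable in $s$. By LaSalle's principle and the compactness of Step 1, every $\omega$-limit point has zero dissipation and is therefore a stationary solution of mass $M$. By Step 2 it equals $\Phi_M$, so the whole trajectory converges in every $L^p$. Undoing the change of variables gives the theorem.

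\textbf{Main obstacle.} The delicate step is Step 1. The required $L^\infty$ decay relies on the repulsive estimate for $\nabla K\in L^{d,\infty}$, which is the critical endpoint and is not covered by Theorem~\ref{thm:heat:kernel:intro}. Along with it, one must justify that $\mathcal F$ is finite and controlled from below for the logarithmic kernel, and rule out loss of mass at infinity; the uniform second-moment bound is what handles the latter. The energy inequality needs the nonnegative initial datum to have finite entropy, which I would obtain by starting the flow at a positive time using parabolic smoothing.
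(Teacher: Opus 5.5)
Your proposal is correct and follows essentially the paper's route: self-similar variables, Theorem~\ref{thm:decay:repulsive} plus the explicit second-moment law for compactness, the free energy $\mathcal F$ with LaSalle's principle, and uniqueness by testing the Euler--Lagrange identity $\log\Phi+|y|^2/4+K*\Phi=\mathrm{const}$ against $\Phi_1-\Phi_2$ and using positive definiteness of $-\log|x|$ on zero-mass functions (Lemma~\ref{lem:fourier inequality}). The paper is more explicit about three details you pass over: the $L^\infty$ bound, since Theorem~\ref{thm:decay:repulsive} covers only finite exponents and a Duhamel step on $[t/2,t]$ is added; positivity of $\Phi$ on all of $\mathbb R^d$, which is needed to derive that identity from $\mathcal D[\Phi]=0$; and the continuity of $\mathcal F$ that LaSalle requires, which the paper secures via the weighted metric on $\mathcal X_M$ and uniform second-moment tightness, and for which your observation $\int|y|^2v\to 2dM+M^2$ can serve the same purpose.
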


\begin{rem}
Theorem~\ref{main thm:self-similar} stands in contrast to the classical two-dimensional Keller--Segel system with logarithmic interaction, see {\it e.g.} \cite{MR3925816,BlanchetDolbeaultPerthame2006,BilerEtAl2006}. 
In the classical system, the logarithmic kernel is mass-critical, and the dynamics are strongly governed by critical mass thresholds, such as the $8\pi$ threshold. Consequently, the existence and asymptotic behavior of self-similar profiles depend sensitively on the total mass. In contrast, the logarithmic interaction in our setting has the {\it opposite, repulsive sign}. Although the kernel remains critical with respect to parabolic scaling, no mass threshold arises in Theorem~\ref{main thm:self-similar}.
\end{rem}

\begin{rem}
The proof of Theorem~\ref{main thm:self-similar} consists of three main steps. First, using scale-invariant estimates and a finite second moment, we show that the rescaled orbit is relatively compact in \(L^p(\mathbb{R}^d)\). 
Second, combining the Lyapunov functional in self-similar variables with the LaSalle invariance principle, we deduce that every element of the \(\omega\)-limit set is a weak stationary solution to the rescaled equation. 
Third, we show that this stationary problem admits at most one solution in the appropriate function class.
This uniqueness argument also departs from the approach used 
to study self-similar solutions 
for the attractive Keller--Segel system {\it e.g.} in the papers \cite{NaitoSuzuki04,Naito06,Nagai11}. 
The key ingredient is Lemma~\ref{lem:fourier inequality}, which establishes that the logarithmic interaction energy has a favorable sign on zero-mass perturbations. This sign property, combined with the strict monotonicity of the logarithm, yields the unique self-similar solution. Thus, uniqueness stems directly from the repulsive energy structure.
\end{rem}


\section{Preliminary properties of solutions}	\label{sec:preliminary}

First, we establish the local-in-time existence of solutions to problem \eqref{eq:main} for a large class of kernels $\nabla K$ and address fundamental properties including nonnegativity, mass conservation, and parabolic regularity. While these results are largely standard, they are included for the sake of completeness. Furthermore, the global existence of solutions to model \eqref{eq:main} follows from the \textit{a priori} estimates derived in the following sections.

\begin{prop}[Local-in-time solutions]\label{prop:local:exist}
	Let
	\begin{equation}\label{nablaK:ass}
	\nabla K = k_1 + k_2, \quad \text{where} \quad k_1 \in L^1(\R^d)\quad \text{and} \quad k_2 \in L^\infty(\R^d).
	\end{equation}
	Assume that $r \in (\max\{d, 2\}, \infty)$. 
	Then, for every $u_0 \in L^1(\R^d) \cap L^r(\R^d)$, there exists $T>0$ such that problem \eqref{eq:main} admits a unique mild solution in the space $C([0,T];L^1(\R^d)\cap L^r(\R^d))$.
\end{prop}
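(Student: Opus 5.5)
We prove Proposition~\ref{prop:local:exist} by the Banach fixed point theorem applied to the Duhamel formulation
\begin{equation}
\mathcal{T}(u)(t) = e^{t\Delta}u_0 + \int_0^t \nabla e^{(t-s)\Delta}\cdot\bigl(u(s)\,\nabla K * u(s)\bigr)\,\d s ,
\end{equation}
posed on a closed ball $B_R$ of $X_T := C([0,T];L^1(\R^d)\cap L^r(\R^d))$ with norm $\|u\|_{X_T}=\sup_{0\le t\le T}(\|u(t)\|_1+\|u(t)\|_r)$. We take $R = 2C_0(\|u_0\|_1+\|u_0\|_r)$, where $C_0$ bounds the heat semigroup on $L^1\cap L^r$.

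\textbf{Step 1: the drift.} We estimate the drift $\nabla K * u$ using the splitting \eqref{nablaK:ass}. By Young's inequality,
\begin{equation}
\|k_1 * u\|_r \le \|k_1\|_1\|u\|_r, \qquad \|k_2 * u\|_\infty \le \|k_2\|_\infty\|u\|_1 .
\end{equation}
We then bound the product $u\,(\nabla K*u)$ in suitable Lebesgue spaces.
\begin{itemize}
\item \emph{The $k_2$ part.} Here $u\,(k_2*u)\in L^1\cap L^r$, with norm at most $\|k_2\|_\infty\|u\|_1\|u\|_{L^1\cap L^r}$.
\item \emph{The $k_1$ part.} By H\"older, $u\,(k_1*u)\in L^{r/2}$, and also $u\,(k_1*u)\in L^{q}$ with $1/q = 1/r+1/r'$, in particular $L^1$ when pairing the $L^1$ and $L^r$ norms appropriately. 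Since $r>2$, we have $r/2>1$, and the product lies in $L^1\cap L^{r/2}$ with norm $\lesssim\|k_1\|_1\|u\|_{L^1\cap L^r}^2$.
\end{itemize}

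\textbf{Step 2: the gradient heat estimates.} We apply
\begin{equation}
\|\nabla e^{\tau\Delta} f\|_{p} \le C\tau^{-\frac12-\frac d2\left(\frac1q-\frac1p\right)}\|f\|_q, \qquad 1\le q\le p\le\infty .
\end{equation}
\begin{itemize}
\item For the $L^1$ norm of $\mathcal{T}(u)$ we use $q=p=1$, which gives a singularity $\tau^{-1/2}$.
\item For the $L^r$ norm we use $q=r/2$ and $p=r$. The exponent is then $\frac12+\frac d{2r}$, which is $<1$ exactly because $r>d$. This is where the hypothesis $r>\max\{d,2\}$ enters.
\end{itemize}
In both cases the time singularity is integrable. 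We obtain
\begin{equation}
\|\mathcal{T}(u)\|_{X_T} \le C_0\|u_0\|_{L^1\cap L^r} + C\bigl(T^{1/2}+T^{\frac12-\frac d{2r}}\bigr)\bigl(\|k_1\|_1+\|k_2\|_\infty\bigr)\|u\|_{X_T}^2 .
\end{equation}
The same bilinear computation gives the difference estimate, in which $\|u\|_{X_T}^2$ is replaced by $(\|u\|_{X_T}+\|v\|_{X_T})\|u-v\|_{X_T}$.

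\textbf{Step 3: contraction.} Choosing $T$ small in terms of $R$, $\|k_1\|_1$ and $\|k_2\|_\infty$ makes $\mathcal{T}$ a contraction of $B_R$. The fixed point is the mild solution.

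\textbf{Step 4: continuity and uniqueness.} Time continuity with values in $L^1\cap L^r$ follows from strong continuity of the heat semigroup together with standard dominated convergence in the Duhamel integral. Uniqueness in the whole space $C([0,T];L^1\cap L^r)$, not only in the ball, follows from the difference estimate applied on a short interval where both solutions are bounded, iterated over $[0,T]$.

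\textbf{Main obstacle.} The only delicate point is the choice of exponents in Step 2. The $L^1$-kernel part $k_1$ gives no $L^\infty$ control on the drift. We therefore have to place the product in $L^{r/2}$ and pay $\tau^{-1/2-d/(2r)}$, which is integrable only because $r>d$. Everything else is routine.
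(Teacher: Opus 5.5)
Your proposal is correct and follows the paper's proof essentially step for step. Both arguments place the $k_1$-term in $L^1\cap L^{r/2}$ and the $k_2$-term in $L^1\cap L^r$, and use the gradient heat estimates with singularities $\tau^{-1/2}$ and $\tau^{-1/2-d/(2r)}$, the latter integrable because $r>d$. Both then conclude with a Banach fixed point and obtain uniqueness by applying the difference estimate on successive short intervals. The only cosmetic difference is in the $L^1$ bound of the $k_1$-term: you pair $\|u\|_r$ with $\|k_1*u\|_{r'}$, whereas the paper uses $\|u\|_2\|k_1\|_1\|v\|_2$; both are controlled by interpolation between $L^1$ and $L^r$.
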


\begin{rem} \label{rem:L1Linfty}
	Assumption \eqref{nablaK:ass} encompasses a broad class of kernels, specifically those of the form:
	\[
	K(x) = 
	\begin{cases} 
	|x|^{-k}, & k \in (0, d-1), \\ 
	\log|x|, & k=0.
	\end{cases}
	\]
	These kernels exhibit the asymptotic gradient behavior $|\nabla K(x)| \sim |x|^{-(k+1)}$, which satisfies the requirements of Proposition \ref{prop:local:exist} via a natural decomposition 
    $$\nabla K = \nabla K \chi_{\{|x| \le 1\}} + \nabla K \chi_{\{|x| > 1\}},$$ 
    where the singularity at the origin is integrable since $k+1 < d$ and the tail is bounded.
	
	More generally, the decomposition property \eqref{nablaK:ass} is satisfied by every function $f$ in the weak Lebesgue space $L^{p,\infty}(\mathbb{R}^d)$ (also known as the Marcinkiewicz space) for $1 < p < \infty$. Indeed, given any threshold $\lambda > 0$, we may decompose $f$ into $f = f_1 + f_2$, where $f_1 = f \chi_{\{|f| > \lambda\}}$ and $f_2 = f \chi_{\{|f| \le \lambda\}}$. It is immediate that $f_2 \in L^\infty(\mathbb{R}^d)$ with $\|f_2\|_{\infty} \le \lambda$. To show that $f_1 \in L^1(\mathbb{R}^d)$, we employ the layer-cake representation alongside the weak $L^p$ estimate $|\{|f| > t\}| \le \|f\|_{L^{p,\infty}}^p t^{-p}$:
	\begin{align*}
		\|f_1\|_{1} &= \int_0^\infty \big|\{x \in \mathbb{R}^d : |f_1(x)| > t\}\big|  \, \mathrm{d}t \\
		&= \int_0^\lambda \big|\{|f| > \lambda\}\big|  \, \mathrm{d}t + \int_\lambda^\infty \big|\{|f| > t\}\big|  \, \mathrm{d}t \\
		&\le \lambda \frac{\|f\|_{L^{p,\infty}}^p}{\lambda^p} + \|f\|_{L^{p,\infty}}^p \int_\lambda^\infty t^{-p}  \, \mathrm{d}t < \infty.
	\end{align*}
\end{rem}

\begin{proof}[Proof of Proposition \ref{prop:local:exist}.]
We work in the Banach space
\[
X_T := C([0,T]; L^1(\R^d) \cap L^r(\R^d))
\]
endowed with the norm
\[
\|u\|_{X_T} := \sup_{0 \le t \le T} \left( \|u(t)\|_1 + \|u(t)\|_r \right).
\]
For $u,v \in X_T$, we introduce the bilinear form
\begin{equation}\label{bilinear form}
    B(u,v)(t) := \int_0^t \nabla \cdot e^{(t-s)\Delta} (u(\nabla K \ast v))(s) \, \mathrm{d}s, \qquad t \in [0,T].
\end{equation}
Then the mild formulation of the Cauchy problem for equation \eqref{eq:main} can be written as
\begin{equation}\label{duhamel}
    u(t) = e^{t\Delta}u_0 + B(u,u)(t), \qquad t \in [0,T].
\end{equation}
Since the heat semigroup estimate guarantees that $e^{t\Delta}u_0$ lies in $X_T$, the construction of a mild solution reduces to establishing the appropriate bounds for the bilinear form \eqref{bilinear form}.

\medskip
\noindent
\textit{Step 1. $L^1$-estimate.}
We apply the H\"older inequality and the Young inequality to the singular term involving $k_1$:
\begin{align*}
    \|u(k_1 \ast v)\|_1 \le \|u\|_2 \|k_1\|_1 \|v\|_2.
\end{align*}
Since $r > \max\{d, 2\} \ge 2$, interpolating between $L^1$ and $L^r$ yields
\[
    \|u\|_2 \le C\|u\|_1^\theta \|u\|_r^{1-\theta} \le C\|u\|_{X_T}
\]
for $\theta = \frac{r-2}{2(r-1)} \in [0,1]$. Consequently, combining these bounds with formula \eqref{bilinear form}, we arrive at
\begin{align*}
    \left\| \int_0^t \nabla \cdot e^{(t-s)\Delta} (u(k_1 \ast v))(s) \, \mathrm{d}s \right\|_1 &\le \int_0^t (t-s)^{-\frac{1}{2}} \|u(k_1 \ast v)(s)\|_1 \, \mathrm{d}s \\
    &\le C T^{\frac{1}{2}} \|k_1\|_1 \|u\|_{X_T} \|v\|_{X_T}.
\end{align*}
For the term involving $k_2$, we have
\begin{align*}
    \|u(k_2 \ast v)\|_1 &\le \|u\|_1 \|k_2\|_\infty \|v\|_1 \\
    &\le \|k_2\|_\infty \|u\|_{X_T} \|v\|_{X_T}.
\end{align*}
Thus, the $L^1$-norm of the $k_2$ contribution to the integral is similarly bounded by
\[
C T^{\frac{1}{2}} \|k_2\|_\infty \|u\|_{X_T} \|v\|_{X_T}.
\]

\medskip
\noindent
\textit{Step 2. $L^r$-estimate.}
Define $m$ by $\frac{1}{m} = \frac{2}{r}$. Since $r \ge 2$, we have $m \ge 1$. Using the H\"older inequality and the Young inequality, we estimate the nonlinear term:
\begin{align*}
    \|u(k_1 \ast v)\|_m &\le \|u\|_r \|k_1 \ast v\|_r \\
    &\le \|k_1\|_1 \|u\|_r \|v\|_r \\
    &\le \|k_1\|_1 \|u\|_{X_T} \|v\|_{X_T}.
\end{align*}
Setting $\beta_1 = \frac{1}{2} + \frac{d}{2}\left(\frac{1}{m} - \frac{1}{r}\right) = \frac{1}{2} + \frac{d}{2r}$, we note that our assumption $r > d$ ensures $\beta_1 < 1$. Hence, the heat semigroup estimate yields
\[
    \left\| \int_0^t \nabla \cdot e^{(t-s)\Delta} (u(k_1 \ast v))(s) \, \mathrm{d}s \right\|_r \le C T^{1-\beta_1} \|k_1\|_1 \|u\|_{X_T} \|v\|_{X_T}.
\]
For the contribution involving $k_2$, we estimate
\begin{align*}
    \|u(k_2 \ast v)\|_r &\le \|u\|_r \|k_2 \ast v\|_\infty \\
    &\le \|u\|_r \|k_2\|_\infty \|v\|_1 \\
    &\le \|k_2\|_\infty \|u\|_{X_T} \|v\|_{X_T}.
\end{align*}
In this case, the corresponding singular exponent is $\beta_2 = \frac{1}{2} < 1$, which yields the bound $C T^{\frac{1}{2}} \|k_2\|_\infty \|u\|_{X_T} \|v\|_{X_T}$.

\medskip
\noindent
\textit{Step 3. Existence.}
Combining the above estimates, we obtain
\[
    \|B(u,v)\|_{X_T} \le C\,\omega(T)\,\|u\|_{X_T}\|v\|_{X_T},
\]
where $\omega(T) \to 0$ as $T \downarrow 0$. Thus, for sufficiently small $T$, a local-in-time solution to equation \eqref{duhamel} is constructed via the Banach fixed-point theorem.

\medskip
\noindent
\textit{Step 4. Uniqueness.}
Let $u_1,u_2\in X_T$ have the same initial datum, and set
\[
 H(t)=\|u_1(t)-u_2(t)\|_1+\|u_1(t)-u_2(t)\|_r.
\]
The estimates in Steps 1 and 2, applied to
$B(u_1,u_1)-B(u_2,u_2)=B(u_1-u_2,u_1)+B(u_2,u_1-u_2)$, yield
\[
 H(t)\le C_T\int_0^t
 \bigl((t-s)^{-1/2}+(t-s)^{-\beta_1}\bigr)H(s)\,\d s,
 \qquad \beta_1=\frac12+\frac{d}{2r}<1.
\]
Here $C_T$ is finite because $u_1,u_2\in X_T$. The singular
Gr\"onwall inequality, or contraction on successive short intervals,
gives $H\equiv0$. This proves uniqueness in $X_T$.
\end{proof}

\begin{prop}\label{prop:properties}
    The local-in-time mild solution $u=u(x,t)$ constructed in Proposition~\ref{prop:local:exist} satisfies the following properties:
    \begin{enumerate}
        \item {Non-negativity:} $u(x,t) \ge 0$ for all $t \in [0,T]$ and $x \in \mathbb{R}^d$, provided $u_0 \ge 0$.
        \item {Mass conservation:} The total mass is preserved for all $t \in [0, T]$, namely,
        \begin{equation}\label{mass}
        M \equiv \int_{\mathbb{R}^d} u(x,t)  \, \mathrm{d}x = \int_{\mathbb{R}^d} u_0(x)  \, \mathrm{d}x.
        \end{equation}
        \item {Parabolic regularity:} The solution possesses classical regularity of the form:
        \begin{equation} \label{regularity}
        \begin{aligned}
            u&\in C\big([0,T];L^1(\mathbb R^d)\cap L^r(\mathbb R^d)\big),\\
            u&\in C\big((0,T];W^{2,p}(\mathbb R^d)\big)
                 \cap C^1\big((0,T];L^p(\mathbb R^d)\big)
            \end{aligned}
            \end{equation}
            for every $p > 1$.
    \end{enumerate}
\end{prop}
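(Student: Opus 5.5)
The plan is to prove the three properties in the order regularity, mass conservation, non-negativity. Each step uses the mild formulation \eqref{duhamel} and the fixed-point construction of Proposition~\ref{prop:local:exist}.

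\textbf{Regularity.} I will first bootstrap the integrability of $u$ using smoothing estimates for the heat semigroup. Writing $\nabla K=k_1+k_2$, the drift satisfies
$$\|\nabla K*u\|_\infty\le \|k_1\|_{1}\|u\|_\infty+\|k_2\|_\infty\|u\|_1 .$$
Before $L^\infty$ is available I will instead use $\|k_1*u\|_q\le\|k_1\|_1\|u\|_q$. Iterating the Duhamel formula on $[\varepsilon,T]$ then shows that $u\in L^\infty((\varepsilon,T)\times\R^d)$ for every $\varepsilon>0$, and hence that $\nabla K*u$ is bounded there.

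Next I will show that $u$ is H\"older continuous in time with values in $L^p$, and that $\nabla u\in L^p$ for $t>0$. This uses the estimate $\|\nabla e^{t\Delta}f\|_p\le Ct^{-1/2}\|f\|_p$, applied once more to the Duhamel formula. As a consequence, the flux $F=u\nabla K*u$ is H\"older in time with values in $L^p$ and has $\nabla\cdot F\in L^p$. Indeed,
$$\nabla\cdot(u\,\nabla K*u)=\nabla u\cdot(\nabla K*u)+u\,\nabla K*\nabla u,$$
and $\nabla K*\nabla u$ lies in $L^p$ because $k_1\in L^1$ and $k_2*\nabla u=\nabla(k_2*u)$ is handled by moving the derivative. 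With this in hand, analytic semigroup theory for inhomogeneous Cauchy problems with H\"older forcing $f=\nabla\cdot F\in C^\theta([\varepsilon,T];L^p)$ gives
$$u\in C((0,T];W^{2,p})\cap C^1((0,T];L^p),$$
and $u$ is a strong solution. The main obstacle is the step that gets H\"older regularity of $\nabla\cdot F$ in time. This may require one more bootstrap, first to $u\in C^\theta((\varepsilon,T];W^{1,p})$ via the standard estimate $\|(e^{h\Delta}-I)e^{t\Delta}f\|\le Ch^\theta t^{-\theta}\|f\|$.

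\textbf{Mass conservation.} Integrating \eqref{duhamel} in $x$ gives
$$\int \nabla\cdot e^{(t-s)\Delta}F(s)\,\d x=0,$$
since $e^{\tau\Delta}$ preserves integrals and $\int\partial_j g=0$ for $g\in W^{1,1}$; here $F(s)\in L^1$ by Step~1 of Proposition~\ref{prop:local:exist}. To make this rigorous, I will write $\nabla\cdot e^{\tau\Delta}F=(\nabla G(\tau))*F$ and note that $\int\nabla G=0$, so Fubini applies. Hence $\int u(t)\,\d x=\int u_0\,\d x$.

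\textbf{Non-negativity.} With the strong regularity established, I will use a Stampacchia argument. Set $b=\nabla K*u$, which is bounded on $[\varepsilon,T]$, and multiply the equation by $u_-=\max(-u,0)$. This gives
$$\frac12\frac{\d}{\d t}\|u_-\|_2^2=-\|\nabla u_-\|_2^2-\int u_-\,b\cdot\nabla u_- \le \frac14\|b\|_\infty^2\|u_-\|_2^2 .$$
Gr\"onwall's inequality then shows that $u_-\equiv 0$, provided $\|u_-(\varepsilon)\|_2\to0$ as $\varepsilon\to0$, which follows from continuity of $u$ into $L^2$ at $t=0$ (since $L^1\cap L^r\subset L^2$). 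Because $\|b\|_\infty$ may blow up as $t\downarrow0$, I will check that it is integrable near $0$, or avoid the issue altogether. The cleaner route is to approximate $u_0$ by smooth data with $b$ bounded up to $t=0$, which is possible since $u_0\in L^r$ with $r>d$ makes $k_1*u$ bounded locally in time. The continuous dependence provided by the contraction argument then passes the sign to the limit.
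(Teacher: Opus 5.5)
Your proposal is correct in substance. For mass conservation it is the paper's argument: integrate \eqref{duhamel} in $x$, using that $e^{\tau\Delta}$ preserves integrals and that $\int_{\R^d}\nabla e^{\tau\Delta}w\,\mathrm{d}x=0$. Your justification via $\nabla G(\tau)*F$ with $F\in L^\infty(0,T;L^1(\R^d))$ is the right way to make the use of Fubini legitimate. For parabolic regularity and non-negativity the paper gives no argument at all. It calls regularity classical and cites the literature, and it does not discuss non-negativity. So your bootstrap plus analytic-semigroup sketch and your Stampacchia truncation fill in what the paper omits; they do not compete with a different proof.

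Two steps need tightening.

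First, $\nabla u(t)\in L^p$ does not follow from applying $\|\nabla e^{t\Delta}f\|_p\le Ct^{-1/2}\|f\|_p$ ``once more'' to the Duhamel term. That term already carries $\nabla\cdot$, so you meet $\|\nabla\nabla\cdot e^{(t-s)\Delta}F(s)\|_p\lesssim(t-s)^{-1}\|F(s)\|_p$, which is not integrable at $s=t$. One of the standard devices is required:
\begin{itemize}
\item a fractional step: regularity of order $\alpha\in(0,1)$ for $u$, hence for $F$, turns the singularity into $(t-s)^{-1+\alpha/2}$;
\item carrying $t^{1/2}\|\nabla u(t)\|_p$ in the contraction norm and invoking uniqueness;
\item subtracting $F(t)$ and using time-H\"older continuity.
\end{itemize}
Also, the $k_2$-part of $\nabla K*\nabla u$ is controlled only in $L^\infty$, by $\|k_2\|_\infty\|\nabla u\|_1$. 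So you need the gradient bound in $L^1$ as well.

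Second, in the non-negativity step your direct route already closes, and this is precisely where $r>d$ enters. The bootstrap gives $\|u(t)\|_\infty\le Ct^{-d/(2r)}$, hence $\|b(t)\|_\infty^2\le C(1+t^{-d/r})$, which is integrable on $(0,T)$. Gr\"onwall on $[\varepsilon,t]$, together with $\|u_-(\varepsilon)\|_2\le\|u(\varepsilon)-u_0\|_2\to0$ (since $u_0\ge0$), then yields $u_-\equiv0$. The reason you give for the approximation route is not right: for $k_1$ merely in $L^1$, $u_0\in L^r$ does not make $k_1*u$ bounded near $t=0$. For approximating data that boundedness comes from $u_0^n\in L^\infty$. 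In any case, that route is not needed.
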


\begin{proof}
The proof of Proposition \ref{prop:properties} is standard; analogous arguments have been established in previous literature (see, e.g., \cite{LR09, LR10,KarchSuzuki2011,karch2026concentrationmasssolutionsaggregationdiffusion,Carrillo-similification}). Consequently, we only provide a brief outline of the key steps.

Mass conservation is obtained by integrating both sides of equation \eqref{duhamel} with respect to $x \in \mathbb{R}^d$ and applying the following relations for the heat semigroup 
$$
\int_{\R^d} e^{t\Delta}w \, \mathrm{d}x=\int_{\R^d} w \, \mathrm{d}x \quad\text{and}\quad  \int_{\R^d} \nabla e^{t\Delta}w \, \mathrm{d}x=0
$$
for all $w\in L^1(\R^d)$ and $t>0$.

The parabolic regularity \eqref{regularity} is also a classical result for such systems, see {\it e.g.} \cite{Carrillo-similification}. 
\end{proof}

\begin{prop}\label{prop:bound for moment}
Assume that a nonnegative initial condition $u_0 \in L^1(\R^d) \cap L^\infty(\R^d)$ satisfies 
$
\mu_2 (0) = \int_{\R^d} |x|^2 u_0 (x)  \, \mathrm{d}x < \infty.
$
 There exists a number $C_T>0$ such that  the local-in-time mild solution $u=u(x,t)$ constructed in Proposition~\ref{prop:local:exist} satisfies 
 \begin{equation}
     \mu_2 (t) \equiv \int_{\R^d} |x|^2 u (x,t)  \, \mathrm{d}x \leq C_T \quad \text{for all } t\in [0,T].
 \end{equation}
\end{prop}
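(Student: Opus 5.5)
We prove the bound by computing the time derivative of the second moment, using the regularity in Proposition~\ref{prop:properties}. To justify the integrations by parts, work with a truncated weight $\varphi_R(x)=|x|^2\psi(x/R)$, where $\psi\in C_c^\infty$, $0\le\psi\le 1$, and $\psi\equiv1$ on the unit ball. Alternatively, use $\varphi_\varepsilon(x)=|x|^2e^{-\varepsilon|x|^2}$. Then $|\nabla\varphi_R(x)|\le C|x|$ and $|\Delta\varphi_R|\le C$ uniformly in $R$. Multiplying \eqref{eq:main} by $\varphi_R$ and integrating by parts on $(0,t]$ is legitimate because $u\in C^1((0,T];L^p)\cap C((0,T];W^{2,p})$. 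This gives
\[
\frac{\d}{\d t}\int \varphi_R u\,\d x=\int \Delta\varphi_R\, u\,\d x-\int \nabla\varphi_R\cdot \bigl(u\,\nabla K*u\bigr)\,\d x .
\]
The diffusion term is bounded by $CM$ by mass conservation \eqref{mass}.

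For the drift term, split $\nabla K=k_1+k_2$ as in \eqref{nablaK:ass}. For the bounded part,
\[
\Bigl|\int\nabla\varphi_R\cdot u\,(k_2*u)\Bigr|\le C\|k_2\|_\infty M\int|x|u\,\d x\le C\|k_2\|_\infty M\bigl(M+\textstyle\int\varphi_R u\bigr)+(\text{tail}),
\]
using $|x|\le 1+|x|^2$. For the integrable part we use $|\nabla\varphi_R|\le C(1+|x|^2)$ and the bound $\|k_1*u\|_\infty\le\|k_1\|_1\|u\|_\infty$. This is where we need $u\in L^\infty$ on $[0,T]$. The local solution lies only in $L^1\cap L^r$, but since $u_0\in L^\infty$, the same fixed-point argument performed in $L^1\cap L^\infty$, or the smoothing in Proposition~\ref{prop:properties}, gives $\sup_{[0,T]}\|u\|_\infty<\infty$. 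If one prefers to avoid this, Hölder gives $\|k_1*u\|_{r}\le\|k_1\|_1\|u\|_r$, and one can estimate $\int |x|\,u\,|k_1*u|$ by Hölder with the weight split as $|x|u^{1/r'}\cdot u^{1/r}$. Either way we obtain
\[
\Bigl|\int\nabla\varphi_R\cdot u\,(\nabla K*u)\Bigr|\le C_1\Bigl(1+\int(1+|x|^2)\psi(x/R)u\,\d x\Bigr)+\text{error}_R,
\]
with $C_1$ depending on $M$, $\|k_1\|_1$, $\|k_2\|_\infty$, and $\sup_{[0,T]}\|u\|_\infty$.

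The main obstacle is making this inequality close in terms of $\int\varphi_R u$ alone, because $|\nabla\varphi_R|\lesssim |x|$ holds on all of $\R^d$ while $\varphi_R$ vanishes outside $B_{2R}$. To handle this, choose a better cutoff. Take $\varphi_R(x)=R^2\theta(|x|^2/R^2)$ with $\theta$ concave, increasing, $\theta(s)=s$ for $s\le1$, and $\theta$ constant for large $s$. Then $|\nabla\varphi_R|^2\le C\varphi_R$, hence $|\nabla\varphi_R|\le C(1+\varphi_R)$, and $|\Delta\varphi_R|\le C$. With this choice the estimate becomes $\frac{\d}{\d t}\int\varphi_R u\le C(1+\int\varphi_R u)$, with a constant independent of $R$. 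Gronwall's inequality then gives $\int\varphi_R u(t)\le (\mu_2(0)+CT)e^{CT}$, since $\varphi_R\le|x|^2$ initially.

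Finally, letting $R\to\infty$ and applying Fatou's lemma (monotone convergence, as $\varphi_R\uparrow|x|^2$) yields $\mu_2(t)\le C_T$ for all $t\in[0,T]$. For $t$ near $0$ we use continuity of $u$ in $L^1$ together with the fact that $\varphi_R$ is bounded, so the identity holds down to $t=0$.
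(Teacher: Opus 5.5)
Your proof is correct and follows the paper's route. You bound the drift $\nabla K*u$ in $L^\infty$ via the splitting $k_1+k_2$ and Young's inequality, then run the cut-off/Gr\"onwall argument for the second moment, which the paper only cites from the literature and you make explicit through the concave cutoff with $|\nabla\varphi_R|^2\le C\varphi_R$ and monotone convergence.

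You are more careful than the paper near $t=0$, since the paper bounds the drift only on $[\tau,T]$, but two small points need tightening. The bound $\sup_{[0,T]}\|u\|_\infty<\infty$ requires combining the short-time $L^1\cap L^\infty$ fixed point \emph{and} the smoothing on $[\tau,T]$; neither one alone, as your ``or'' suggests, covers all of $[0,T]$. Also, because $\varphi_R$ is constant at infinity and the regularity is only $C^1((0,T];L^p)$ for $p>1$, you should differentiate $\int\varphi_R u$ by writing $\varphi_R=c_R-\chi_R$ with $\chi_R\in C_c^\infty$ and using mass conservation.
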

\begin{proof}
The parabolic regularity \eqref{regularity} for $p > d/2$ implies that $u \in C((0,T]; L^\infty(\mathbb{R}^d))$ for every $\tau > 0$. Thus, using the decomposition $\nabla K = k_1 + k_2$ with $k_1 \in L^1(\mathbb{R}^d)$ and $k_2 \in L^\infty(\mathbb{R}^d)$, along with Young's inequality, in the same way as in the proof of Proposition~\ref{prop:local:exist}, we obtain
\begin{equation}
    \nabla K * u \in L^\infty(\tau, T; L^\infty(\mathbb{R}^d))
\end{equation}
for every $\tau > 0$. This regularity is sufficient to justify the standard cut-off argument for the second moment on $[\tau, T]$ (see \cite[Remark 2.3]{CPZ04} and \cite[Lemma 3.2]{KSu08}).
\end{proof}


\section{\texorpdfstring{$L^p$}{Lp}-decay of solutions}\label{sec:Lp:decay}

The first step in studying the large-time behavior of solutions to the Cauchy problem \eqref{eq:main}--\eqref{ini:L1Linf} is to obtain decay estimates of their $L^p$-norms. In this section, we establish decay estimates analogous to those for the heat semigroup for two classes of kernels:
arbitrary  regular kernels and
singular repulsive kernels.


\subsection{Decay in the case of regular kernels}

\begin{theorem} \label{thm:regular:decay}
Let $d \ge 1$ and $M>0$. Assume that the interaction kernel $K$ satisfies 
\begin{equation}
    \nabla K(-x) = -\nabla K(x) \qquad \text{for all } x \in \mathbb{R}^d,
\end{equation}
and 
\begin{equation}\label{ass:reg:K:decay}
    \sup_{x \in \mathbb{R}^d} |x| \, |\nabla K(x)| < \infty.
\end{equation}
In addition, assume that one of the following conditions holds:
\begin{itemize}
    \item if $d = 1$, then $\nabla K \in L^1(\R^d)$.
    \item if $d \ge 2$, then $K \in W^{1,\infty}(\R^d),\quad \nabla K \in L^d(\R^d)$.
\end{itemize}
Let the initial datum 
$u_0 \in L^1(\mathbb{R}^d) \cap L^\infty(\R^d)$  be nonnegative and satisfy
\begin{equation*}
    \int_{\mathbb{R}^d} u_0(x)  \, \mathrm{d}x = M, \quad
    \int_{\mathbb{R}^d} |x|^2 u_0(x)  \, \mathrm{d}x < \infty, \quad \text{and} \quad
    \int_{\mathbb{R}^d} u_0(x)\log u_0(x)  \, \mathrm{d}x < \infty.
\end{equation*}
Then, for every $p \in [1,\infty]$, the solution $u(x,t)$ to problem \eqref{eq:main} with mass $M=\|u_0\|_1$
satisfies the following decay estimate:
\begin{equation}
   \|u(t)\|_p \leq C(M,p,d,\nabla K,u_0)   t^{- \frac{d}{2}\left(1-\frac{1}{p}\right)}
    \qquad \text{for all } t>0.
\end{equation}
\end{theorem}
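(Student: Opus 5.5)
Since $\|u(t)\|_1=M$ by mass conservation (Proposition~\ref{prop:properties}), it suffices by interpolation to prove the $L^\infty$ (or any large $L^p$) decay $\|u(t)\|_\infty\le Ct^{-d/2}$. I would obtain this in two stages: first a uniform-in-time control coming from the free energy, and then a bootstrap through the Duhamel formula \eqref{duhamel} that turns this control into the sharp heat decay rate.

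\textbf{Stage 1: free energy and entropy.} Introduce
\[
\mathcal F[u]=\int u\log u\,\d x+\frac12\iint K(x-y)u(x)u(y)\,\d x\,\d y .
\]
The symmetry $\nabla K(-x)=-\nabla K(x)$ gives the dissipation identity $\frac{\d}{\d t}\mathcal F[u]=-\int u|\nabla(\log u+K*u)|^2\,\d x\le0$. For $d\ge2$ we have $K\in L^\infty$, so the interaction part is bounded by $\tfrac12\|K\|_\infty M^2$. Hence $\int u\log u$ is bounded above uniformly in $t$. Next I combine the second moment with the drift bound $|x\cdot\nabla K(x)|\le C$ from \eqref{ass:reg:K:decay}. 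By antisymmetry,
\[
\frac{\d}{\d t}\mu_2=2dM-\iint (x-y)\cdot\nabla K(x-y)u(x)u(y)\,\d x\,\d y\le CM+CM^2 ,
\]
so $\mu_2(t)\le C(1+t)$. Together with the Carleman-type lower bound $\int u\log u\ge -C-\mu_2$, this gives relative entropy control with respect to the heat kernel, namely $\int u\log(u/MG(t+1))\le C$. For $d=1$ the same scheme works, with $K$ defined from $\nabla K\in L^1$ and therefore bounded.

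\textbf{Stage 2: Duhamel bootstrap.} Write $u(t)=e^{(t-t_0)\Delta}u(t_0)+\int_{t_0}^t\nabla\cdot e^{(t-s)\Delta}(u\nabla K*u)\,\d s$. The key nonlinear estimate is
\[
\|u\nabla K*u\|_{q}\le \|u\|_{a}\|\nabla K*u\|_{b},
\]
where $\|\nabla K*u\|_\infty\le\|\nabla K\|_d\|u\|_{d/(d-1)}$, or alternatively $\|\nabla K\|_\infty M$. For a scale-critical estimate I would use $\nabla K\in L^d$, which is the critical space: the bilinear term then behaves like the critical Keller--Segel case. This means smallness is needed either in $\|\nabla K\|_d$ or in $\|u\|_{d/(d-1)}$ on late time intervals. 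The smallness comes from Stage 1 in the following way. The relative entropy bound gives uniform integrability of the rescaled densities $t^{d/2}u(t^{1/2}\cdot,t)$, which means no concentration at the diffusive scale. As a result, the $L^d$ norm of $\nabla K$ restricted to large $|x|\ge R\sqrt t$ is small; more precisely, I would split $\nabla K=\nabla K\chi_{|x|<R}+\nabla K\chi_{|x|\ge R}$, with the tail small in $L^d$ and the core in $L^1\cap L^\infty$. I would then run the standard argument for $f(t)=\sup_{s\le t}s^{\frac d2(1-\frac1p)}\|u(s)\|_p$ with $p\in(d,\infty)$. The result is an inequality $f(t)\le C+\varepsilon f(t)+C_R\,(\text{lower-order terms})$, where the core part contributes with an extra factor of $t^{-1/2}$ because $\nabla K\chi_{|x|<R}\in L^1$. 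Taking $\varepsilon$ small closes the bound, and one more Duhamel step passes from $L^p$ to $L^\infty$.

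\textbf{Main obstacle.} The hardest step is closing the bootstrap without any smallness on $\|\nabla K\|_d M$. Everything hinges on converting the entropy/free-energy bound into the $\varepsilon$-smallness of the critical part of the nonlinearity. If the splitting of $\nabla K$ is not enough, my fallback is an $L^p$ energy estimate: multiply by $u^{p-1}$ and bound the drift contribution by $\|\nabla K*u\|_\infty\le\|\nabla K\|_\infty M$. Through Nash/Gagliardo--Nirenberg this gives $\|u(t)\|_p$ bounded uniformly in time. I would then use the uniform bound in $L^p$, $p>d$, to make the Duhamel integral over $[t/2,t]$ subordinate to the linear term.
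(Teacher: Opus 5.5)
\textbf{Stage 1 has a genuine gap, and it propagates.} A uniform upper bound on $\int u\log u$ together with $\mu_2(t)\le C(1+t)$ does not bound the relative entropy. By Lemma~\ref{lem:ulogu:G},
\[
\int u\log\frac{u}{MG(t+1)}\,\d x=\int u\log u\,\d x-M\log M+\frac{dM}{2}\log\bigl(4\pi(t+1)\bigr)+\frac{\mu_2(t)}{4(t+1)},
\]
so your bounds only give $C+\frac{dM}{2}\log(t+1)$. The Carleman-type lower bound on $\int u\log u$ points in the wrong direction and cannot help. What is needed is that the free energy itself decreases at a logarithmic rate:
\[
E[u(t)]\le-\tfrac{dM}{2}\log t+C,
\]
which is Lemma~\ref{lem:entropy_estimate}, taken from Carrillo et al. This is a quantitative use of the dissipation, not just monotonicity of $E$. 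A fixed, non-spreading profile has bounded entropy, yet its relative entropy grows like $\frac{dM}{2}\log t$. So without this input you get no non-concentration at the diffusive scale.

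\textbf{Stage 2 uses non-concentration in the wrong place, even granting the entropy bound.} Smallness of $\nabla K\chi_{\{|x|\ge R\}}$ in $L^d$ follows directly from $\nabla K\in L^d$ and has nothing to do with concentration of $u$; your cutoff is also at fixed $R$, not $R\sqrt t$. The real obstruction is the core $\nabla K\chi_{\{|x|<R\}}$, whose $L^1$-norm blows up as $R\to\infty$. Its contribution can be absorbed only if $u$ is already small at late times in a critical norm. Your claimed extra factor $t^{-1/2}$ presupposes the very decay you are trying to prove.

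The missing intermediate step is $\|u(t)\|_d\to0$ (for $d=2$, $\|u(t)\|_2\to0$). The paper gets this in Lemma~\ref{lem:decay}: it combines the relative entropy bound, applied on the level sets $\{u\ge G\log t\}$, with the uniform $L^r$ bounds of Lemma~\ref{lem:bound}, which are your fallback.

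With that smallness in hand, the paper avoids a Duhamel bootstrap altogether. It works with the $L^p$ energy identity and splits $\nabla K=(\nabla K-\Psi)+\Psi$, with $\Psi\in C_c^\infty$. The tail term is bounded by $C_S\delta M\|\nabla u^{p/2}\|_2^2$ and the core term by $C_S\|\Psi\|_1\|u\|_d\|\nabla u^{p/2}\|_2^2$; both are absorbed into the dissipation for $t\ge T_p$. Nash's inequality then gives $Y_p'\le-cY_p^{1+2/(d(p-1))}$, whose solutions decay at the heat rate whatever the value of $Y_p(T_p)$. Your fallback on its own, uniform $L^p$ bounds without decay, cannot make the Duhamel integral over $[t/2,t]$ subordinate to the linear term.
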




\begin{rem} 
   In the paper \cite{KS10-spikes}, the $L^p$-decay of solutions to model \eqref{eq:main}--\eqref{ini:L1Linf} in one spatial dimension was obtained for all kernels $K = K(x)$ such that $\nabla K \in L^1(\mathbb{R})$ and the quantity $\|\nabla K\|_1 \|u_0\|_1$ is sufficiently small. Note that Theorem \ref{thm:regular:decay} is also stated for $d = 1$ and allows us to remove the smallness assumption from \cite{KS10-spikes}, albeit under extra assumptions on the kernel and initial data, as stated in Theorem \ref{thm:regular:decay}.
\end{rem}

We proceed with the proof of Theorem \ref{thm:regular:decay} 
in several steps
by establishing some preliminary estimates. 

First, following \cite{Carrillo-similification}, we recall that equation \eqref{eq:main} admits the following Lyapunov functional:
\begin{equation}\label{entropy}
    E[u(t)] = \int_{\R^d} u\log u \, \mathrm{d}x + \frac{1}{2}\int_{\R^d} u (K * u) \, \mathrm{d}x.
\end{equation}
The dissipation of this functional is given by
\begin{equation}\label{entropy dyssipation}
    \frac{d}{dt}E[u(t)] = -\int_{\R^d} u|\nabla (\log u + K*u)|^2  \, \mathrm{d}x.
\end{equation}

The following estimate is a direct consequence of this Lyapunov structure. It can be obtained by directly adapting the argument from \cite{Carrillo-similification} to the case of a general mass $\|u_0\|_{L^1(\R^d)}=M$; therefore, we omit the proof.

\begin{lemma}[{\cite[Lemma 3.1]{Carrillo-similification}}]\label{lem:entropy_estimate}
Assume 
the regular-kernel hypotheses of Theorem~\ref{thm:regular:decay}; in particular, $K$ is even and bounded.
Let $u$ be a nonnegative solution to problem \eqref{eq:main}--\eqref{ini:L1Linf} satisfying
$
    E[u_0] < \infty.
$ 
Then, it holds that
\begin{equation*}
    E[u(t)] \leq -\frac{d\|u_0\|_{1}}{2}\log \left(c t + e^{-\frac{2}{d\|u_0\|_{1}}E[u_0]}\right),
\end{equation*}
where the constant $c > 0$ depends only on $\|K \|_{\infty}$, $\|u_0\|_{1}$, and the dimension $d$.
\end{lemma}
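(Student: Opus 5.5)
The plan is to compare $E[u(t)]$ with the free energy of the heat flow, using the second moment as the confining quantity. Throughout, write $M=\|u_0\|_1$ and $\mu_2(t)=\int_{\R^d}|x|^2u(x,t)\,\d x$.

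\emph{Step 1 (moment growth).} Differentiate $\mu_2$ along the equation, justified by Proposition~\ref{prop:bound for moment}. The diffusion term gives $2dM$. For the drift term, integrate by parts and symmetrize using $\nabla K(-x)=-\nabla K(x)$:
\begin{equation}
\frac{d}{dt}\mu_2(t)=2dM-\iint_{\R^d\times\R^d} (x-y)\cdot\nabla K(x-y)\,u(x)u(y)\,\d x\,\d y .
\end{equation}
By assumption \eqref{ass:reg:K:decay}, the double integral is bounded in absolute value by $\sup_x|x||\nabla K(x)|\,M^2$. Hence $\mu_2(t)\le \mu_2(0)+Ct$ and $\mu_2(t)\ge \mu_2(0)-Ct$.

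\emph{Step 2 (lower bound for the entropy).} Use the standard Gaussian-comparison (Carleman-type) inequality. Nonnegativity of the relative entropy with respect to a Gaussian of variance $\sigma\sim \mu_2/M$ gives
\begin{equation}
\int_{\R^d} u\log u\,\d x \ge M\log M-\frac{dM}{2}\log\Bigl(\frac{2\pi e\,\mu_2}{dM}\Bigr).
\end{equation}
The interaction part satisfies $\frac12\int u(K*u)\ge -\frac12\|K\|_\infty M^2$.

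\emph{Step 3 (differential inequality for $E$).} Take the dissipation \eqref{entropy dyssipation}. By Cauchy--Schwarz,
\begin{equation}
\Bigl(\int_{\R^d} x\cdot u\nabla(\log u+K*u)\,\d x\Bigr)^2\le \mu_2\int_{\R^d} u|\nabla(\log u+K*u)|^2\,\d x.
\end{equation}
The left-hand integral equals $-dM+\frac12\iint (x-y)\cdot\nabla K(x-y)uu$, which is the same expression as $-\frac12\frac{d}{dt}\mu_2$. This yields an inequality linking $E'$, $\mu_2$ and $\mu_2'$. The goal is to obtain
\begin{equation}
\frac{d}{dt}E\le -\frac{(dM-C)^2_+}{\mu_2}.
\end{equation}
This form is only good when $dM$ dominates $\sup|x\nabla K|M^2$, which is a smallness condition. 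So Carrillo et al.'s route must be different, and I would instead follow it by working from the bound on $E$ to a bound on $\mu_2$ through the entropy.

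The cleaner target is a relation of the form $\frac{d}{dt}\mu_2\ge c'$ together with
\begin{equation}
E\le -\frac{dM}{2}\log \mu_2+C.
\end{equation}
To get it, I would proceed as follows.

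\emph{Step 4 (rescaling and integration).} Set $v(x,t)=\lambda^d u(\lambda x,t)$ with $\lambda=\sqrt{\mu_2}$ so that $v$ has unit second moment. Then
\begin{equation}
\int_{\R^d} u\log u\,\d x=\int_{\R^d} v\log v\,\d x-\frac{dM}{2}\log\mu_2 .
\end{equation}
Combining this with $E$ nonincreasing, the bounded interaction term, and the Step~2 lower bound gives two-sided control of $\int u\log u+\frac{dM}{2}\log \mu_2$. Differentiating in time and using the Cauchy--Schwarz bound from Step~3 on the entropy dissipation then gives
\begin{equation}
\frac{d}{dt}E\le -\frac{c}{\mu_2}\exp\Bigl(-\frac{2}{dM}E\Bigr).
\end{equation}
This follows from a log-Sobolev/Fisher information bound of the form
\begin{equation}
I(u)\ge 2\pi e\, d\, M\exp\Bigl(-\frac{2}{dM}\Bigl(\int u\log u\Bigr)/M\Bigr).
\end{equation}
Here $I(u)=\int u|\nabla\log u|^2$, and this inequality is Stam's (isoperimetric Fisher-information) inequality. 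For this, the dissipation must control $I(u)$ up to the bounded $K$-contribution. Write
\begin{equation}
\int u|\nabla(\log u+K*u)|^2\ge \tfrac12 I(u)-\int u|\nabla K*u|^2 .
\end{equation}
The last term is at most $\|\nabla K\|_\infty^2M^3$, a constant. That constant is harmless only if $I$ is large, i.e. if the entropy is very negative. Otherwise $E$ is already small and we are fine, or else it is the obstacle.

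The resulting ODE $y'\le -c e^{-2y/(dM)}$ integrates exactly to
\begin{equation}
E(t)\le -\frac{dM}{2}\log\bigl(ct+e^{-2E(0)/(dM)}\bigr).
\end{equation}
\textbf{Main obstacle:} absorbing the interaction contribution in the dissipation without smallness. This uses that $K$ is bounded, so $E$ and $\int u\log u$ differ by at most $\frac12\|K\|_\infty M^2$, which enters only the constant $c$.
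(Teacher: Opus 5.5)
Your proposal has a genuine gap: it never proves the inequality the lemma rests on, namely $D[u]:=\int u|\nabla(\log u+K*u)|^2\,\d x\ge C_*\exp\bigl(\frac{2}{dM}E[u]\bigr)$ with $C_*=C_*(d,M,\|K\|_\infty)$. Once this holds, $\frac{d}{dt}e^{-2E/(dM)}\ge 2C_*/(dM)$, and the stated bound follows at once.

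\emph{The additive splitting.} Your splitting $D\ge\frac12 I(u)-\|\nabla K\|_\infty^2M^3$ cannot give this. Combined with Stam, it only yields $D\ge c\,e^{2E/(dM)}-C_0$, and the right-hand side is negative as soon as $E$ drops below a fixed level. That is exactly the regime $E\to-\infty$ the lemma is about, and ``otherwise $E$ is already small'' supplies no mechanism for further decrease there.

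\emph{A sign error in Step~4.} Stam's inequality is $I(u)\ge 2\pi e\,d\,M\exp\bigl(\frac{2}{dM}\int u\log\frac uM\bigr)$. So the Fisher information is \emph{small} when the entropy is very negative, and the ODE you need is $y'\le -c\,e^{+2y/(dM)}$. The one you wrote, $y'\le -c\,e^{-2y/(dM)}$, reaches $-\infty$ in finite time and does not integrate to the claimed formula.

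\emph{Routing through $\mu_2$.} Steps~1--3 and the rescaling by $\sqrt{\mu_2}$ would make $c$ depend on $\mu_2(0)$ and $\sup|x||\nabla K|$, which the statement excludes. The asserted two-sided control of $\int u\log u+\frac{dM}{2}\log\mu_2$ is also not available; only the Carleman lower bound is. Finally, the factor $1/\mu_2$ disappears without justification.

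\emph{The missing idea.} The paper omits the proof and defers to Carrillo et al. What you need is to use the boundedness of $K$ \emph{multiplicatively} in the dissipation. Set $V=K*u$, $\kappa:=\|K\|_\infty M\ge|V|$ and $g:=ue^{V}$. Then
\begin{equation*}
u\,|\nabla(\log u+V)|^2=e^{-V}\frac{|\nabla g|^2}{g}\ge 4e^{-\kappa}\,|\nabla\sqrt g|^2 .
\end{equation*}
Next use a bound that survives bounded multiplicative perturbations. One choice is the Gagliardo--Nirenberg inequality $\int g^{1+2/d}\,\d x\le C_d\|g\|_1^{2/d}\|\nabla\sqrt g\|_2^2$, together with:
\begin{itemize}
\item $\|g\|_1\le e^{\kappa}M$;
\item $\int g^{1+2/d}\ge e^{-(1+2/d)\kappa}\int u^{1+2/d}$;
\item Jensen's inequality $\frac d2\log\int(u/M)^{1+2/d}\ge\int(u/M)\log(u/M)$, i.e.\ $\int u^{1+2/d}\ge M\exp\bigl(\frac{2}{dM}\int u\log u\bigr)$.
\end{itemize}
Since $\int u\log u\ge E-\frac12\|K\|_\infty M^2$, this gives $D\ge C_*e^{2E/(dM)}$ using no moments and no bound on $\nabla K$. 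Applying Stam to $g$ instead would force you to compare the Shannon entropies of $g/\|g\|_1$ and $u/M$ with coefficient exactly one; the power $\int u^{1+2/d}$ avoids that.
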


\begin{rem}
 Applying Lemma \ref{lem:entropy_estimate}, we immediately obtain the following bound, which demonstrates that $E[u(t)]$ escapes to $-\infty$ logarithmically in time:
\begin{equation}\label{eq:entropy:estimate}
\begin{split}
E[u(t)] &\leq -\dfrac{d\|u_0\|_{1}}{2} \log \left\{t\left(c + \dfrac{e^{-\frac{2}{d\|u_0\|_{1}}E[u_0]}}{t}\right)\right\}\\
&= -\dfrac{d\|u_0\|_{1}}{2}\log t -\dfrac{d\|u_0\|_{1}}{2}\log \left(c + \dfrac{e^{-\frac{2}{d\|u_0\|_{1}}E[u_0]}}{t}\right)\\
&\leq -\dfrac{d\|u_0\|_{1}}{2}\log t -\dfrac{d\|u_0\|_{1}}{2}\log c.
\end{split}
\end{equation}
\end{rem}

\begin{lemma}\label{lem:ulogu:G}
For the heat kernel \eqref{heat kernel}
and for  arbitrary nonnegative  $f\in L^1(\R^d)$ such that $f\log f \in L^1(\R^d)$ and $\int |x|^2f(x)\,\d x<\infty$, the following identity holds 
\begin{align*}
\int_{\R^d}f \log \dfrac{f}{G}  \, \mathrm{d}x = \int_{\R^d} f\log f  \, \mathrm{d}x + \dfrac{d\|f\|_{1}}{2}\log (4\pi t) + \dfrac{\int_{\R^d}|x|^2 f(x)  \, \mathrm{d}x}{4t}.
\end{align*}
\end{lemma}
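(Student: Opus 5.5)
The identity follows by a direct pointwise computation: expand the logarithm of the heat kernel \eqref{heat kernel} and integrate term by term against $f$. For fixed $t>0$ and every $x\in\R^d$,
\[
\log G(x,t) = -\frac{d}{2}\log(4\pi t) - \frac{|x|^2}{4t},
\]
so that, wherever $f(x)>0$,
\[
f\log\frac{f}{G} = f\log f + \frac{d}{2}\log(4\pi t)\, f + \frac{|x|^2}{4t}\, f .
\]
On the set $\{f=0\}$ both sides vanish under the usual convention $0\log 0 = 0$.

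Next we check that each term on the right-hand side is integrable. The first term lies in $L^1(\R^d)$ by the hypothesis $f\log f\in L^1(\R^d)$. The second is a constant multiple of $f\in L^1(\R^d)$. The third is integrable because the second moment $\int|x|^2 f\,\d x$ is finite. Hence the left-hand side is integrable, and $\int f\log(f/G)\,\d x$ is a well-defined finite number.

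Integrating term by term, and using $\int f\,\d x = \|f\|_1$ (valid since $f\ge 0$), gives
\[
\int_{\R^d} f\log\frac{f}{G}\,\d x = \int_{\R^d} f\log f\,\d x + \frac{d\|f\|_1}{2}\log(4\pi t) + \frac{1}{4t}\int_{\R^d}|x|^2 f(x)\,\d x,
\]
which is exactly the claimed identity.

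The only point needing care is integrability, since $f\log f$ may change sign. This is handled entirely by the $L^1$ assumption on $f\log f$, so no real obstacle is expected.
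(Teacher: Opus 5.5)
Your proposal is correct and follows the same route as the paper, which simply substitutes the explicit Gaussian formula for $\log G$ into $f\log(f/G)$ and integrates. Your extra check that each of the three resulting terms is integrable, so that the left-hand side is finite and term-by-term integration is legitimate, is a detail the paper leaves implicit.
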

\begin{proof}
    Substituting the explicit Gaussian formula gives the identity.
\end{proof}

Lemma \ref{lem:ulogu:G} reduces the study of the relative entropy with respect to the heat kernel to the control of the entropy and the second moment. 
We next prove that $\mu_2(t)$, which is denoted by
\begin{align*}
    \mu_2(t) := \int_{\R^d}|x|^2 u(x,t)  \, \mathrm{d}x,
\end{align*}
grows at most linearly under suitable assumptions on $K$.

\begin{lemma}\label{lem:moment} Let $K$ and $u_0$ satisfy the assumptions of Theorem~\ref{thm:regular:decay}, and let $u=u(x,t)$ be a nonnegative solution to \eqref{eq:main} in $\R^d \times (0,\infty)$.
Then the second moment $\mu_2(t)$ satisfies
\begin{align}\label{eq:moment}
    \mu_2(t) \leq \mu_2(0) + (2dM + \sup_{x \in \R^d} |x||\nabla K(x)|M^2)t\quad\text{for all } t > 0.
\end{align}
\end{lemma}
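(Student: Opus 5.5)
The plan is to differentiate the second moment along the flow, integrate by parts, and symmetrize the interaction term using the oddness of $\nabla K$. By Proposition~\ref{prop:bound for moment}, $\mu_2(t)$ is finite on every bounded time interval. The parabolic regularity \eqref{regularity} then justifies the computation below, first for a truncated weight $|x|^2\psi(x/R)$ with a smooth cut-off $\psi$, and then in the limit $R\to\infty$ (as in \cite{CPZ04,KSu08}). Multiplying \eqref{eq:main} by $|x|^2$ and integrating by parts gives
\[
\frac{d}{dt}\mu_2(t) = \int_{\R^d} \Delta(|x|^2)\,u\,\d x - \int_{\R^d} \nabla(|x|^2)\cdot u\,(\nabla K * u)\,\d x
= 2dM - 2\int_{\R^d}\int_{\R^d} x\cdot \nabla K(x-y)\,u(x)u(y)\,\d y\,\d x .
\]
Here mass conservation \eqref{mass} is used in the first term.

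For the interaction term, I will exchange $x$ and $y$ in the double integral. Since $\nabla K(y-x)=-\nabla K(x-y)$, averaging the two expressions gives
\[
2\iint x\cdot\nabla K(x-y)\,u(x)u(y)\,\d x\,\d y = \iint (x-y)\cdot\nabla K(x-y)\,u(x)u(y)\,\d x\,\d y .
\]
Hypothesis \eqref{ass:reg:K:decay} bounds the integrand pointwise:
\[
|(x-y)\cdot\nabla K(x-y)| \le \sup_z |z||\nabla K(z)|.
\]
Since $u\ge0$, the whole term is therefore bounded in absolute value by $\sup_z|z||\nabla K(z)|\,M^2$. This yields
\[
\mu_2'(t)\le 2dM+\sup|z||\nabla K(z)|\,M^2,
\]
and integrating in time gives \eqref{eq:moment}.

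The main obstacle is justifying the truncation limit. With the cut-off, the interaction term becomes $\iint \nabla(|x|^2\psi_R)(x)\cdot\nabla K(x-y)u(x)u(y)$. After symmetrization it becomes
\[
\tfrac12\iint\bigl(\nabla\phi_R(x)-\nabla\phi_R(y)\bigr)\cdot\nabla K(x-y)\,u(x)u(y),
\qquad \phi_R(x)=|x|^2\psi(x/R).
\]
Since $|\nabla\phi_R(x)-\nabla\phi_R(y)|\le C|x-y|$ uniformly in $R$ (because $D^2\phi_R$ is bounded uniformly in $R$), the integrand is bounded by $C\sup_z|z||\nabla K(z)|\,u(x)u(y)$ uniformly in $R$. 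Dominated convergence then applies, and the diffusion term $\int u\,\Delta\phi_R$ converges to $2dM$. So the estimate in fact holds directly for $\phi_R$ with uniform constants, and monotone convergence passes the resulting integrated inequality to the limit.
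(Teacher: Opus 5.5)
Your proposal is correct and is essentially the paper's proof: differentiate $\mu_2$, use mass conservation for the diffusion term, symmetrize the interaction term via the oddness of $\nabla K$ to get $-\iint (x-y)\cdot\nabla K(x-y)\,u(x)u(y)\,\d x\,\d y$, bound this by $\sup_z|z||\nabla K(z)|\,M^2$, and integrate in time. Your cut-off argument, using $|\nabla\phi_R(x)-\nabla\phi_R(y)|\le C|x-y|$ uniformly in $R$, spells out the justification that the paper only attributes to Proposition~\ref{prop:bound for moment}; note that the sharp constant comes from letting $R\to\infty$ by dominated convergence in the time-integrated identity, since the uniform-in-$R$ bound on its own carries a constant depending on $\psi$.
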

\begin{proof}
We remark that the following calculation is justified owing to Proposition~\ref{prop:bound for moment}.
Differentiating $\mu_2(t)$ and using \eqref{eq:main}, we obtain
\begin{align*}
    \frac{d}{dt}\mu_2(t)
    &= \frac{d}{dt}\int_{\R^d}|x|^2u(x,t) \, \mathrm{d}x \\
    &= \int_{\R^d}|x|^2 u_t(x,t) \, \mathrm{d}x \\
    &= 2dM - 2\int_{\R^d}u(x,t)\,x\cdot (\nabla K * u)(x,t) \, \mathrm{d}x.
\end{align*}
Employing the oddness of $\nabla K$, we rewrite the interaction term as
\begin{align*}
    -2\int_{\R^d}u(x,t)\,x\cdot (\nabla K * u)(x,t) \, \mathrm{d}x
    = -\int_{\R^d}\int_{\R^d}(x-y)\cdot \nabla K(x-y)\,u(x,t)u(y,t) \, \mathrm{d}y \, \mathrm{d}x.
\end{align*}
Therefore, by the assumption
\[
\sup_{z\in\R^d}|z|\,|\nabla K(z)|<\infty,
\]
we infer that
\begin{align*}
    \frac{d}{dt}\mu_2(t)
    &\le 2dM + \sup_{z\in\R^d}|z|\,|\nabla K(z)|
    \int_{\R^d}\int_{\R^d}u(x,t)u(y,t) \, \mathrm{d}y \, \mathrm{d}x \\
    &= 2dM + \sup_{z\in\R^d}|z|\,|\nabla K(z)|\,M^2.
\end{align*}
Integrating this differential inequality over $(0,t)$, we conclude
\[
\mu_2(t)\le \mu_2(0) + Ct
\qquad \text{for all } t>0,
\]
where
\[
C:=2dM + \sup_{z\in\R^d}|z|\,|\nabla K(z)|\,M^2.
\]
This proves that $\mu_2(t)$ grows at most linearly in time.
\end{proof}

Next, we establish a uniform bound on the relative entropy with respect to the heat kernel, which follows from the Lyapunov structure combined with control over the second moment.

\begin{lemma}\label{lem:relative entropy bound}
Assume the kernel hypotheses of Theorem~\ref{thm:regular:decay}.
Let $u(x,t)$ be a nonnegative solution to problem \eqref{eq:main} with initial datum $u_0 \in L^1(\R^d)\cap L^\infty(\R^d)$ satisfying
\[
u_0 \ge 0, \quad \int_{\R^d} |x|^2 u_0(x) \, \mathrm{d}x < \infty, \quad E[u_0] < \infty.
\]
Then it holds that
\begin{align}\label{eq:relative entropy bound}
\sup_{t\ge t_0} \int_{\R^d} u(x,t)\log \frac{u(x,t)}{G(x,t)} \, \mathrm{d}x < \infty
\end{align}
for some $t_0 > 0$.
\end{lemma}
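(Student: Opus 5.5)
We combine Lemma~\ref{lem:ulogu:G} with $f=u(t)$, the entropy decay of Lemma~\ref{lem:entropy_estimate} (in the form \eqref{eq:entropy:estimate}), and the linear moment growth of Lemma~\ref{lem:moment}. By mass conservation, $\|u(t)\|_1=M$, so Lemma~\ref{lem:ulogu:G} gives
\begin{equation*}
\int_{\R^d} u\log\frac{u}{G}\,\d x = \int_{\R^d} u\log u\,\d x + \frac{dM}{2}\log(4\pi t) + \frac{\mu_2(t)}{4t}.
\end{equation*}
The identity is legitimate for each $t>0$: $u(t)\in L^1\cap L^\infty$ by the regularity of Proposition~\ref{prop:properties}, and $\mu_2(t)<\infty$ by Proposition~\ref{prop:bound for moment}. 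Together these give $u\log u\in L^1$, where the negative part is controlled by the second moment in the standard way.

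Next we bound the entropy term. We write $\int u\log u = E[u(t)] - \frac12\int u(K*u)$. Since $K\in W^{1,\infty}$ is bounded, $\bigl|\frac12\int u(K*u)\bigr|\le \frac12\|K\|_\infty M^2$. Moreover, $E[u_0]<\infty$ because $u_0\log u_0\in L^1$ and $K$ is bounded. Hence \eqref{eq:entropy:estimate} yields
\begin{equation*}
\int_{\R^d} u\log u\,\d x \le -\frac{dM}{2}\log t - \frac{dM}{2}\log c + \frac12\|K\|_\infty M^2 .
\end{equation*}
The $-\frac{dM}{2}\log t$ term exactly cancels the growth $\frac{dM}{2}\log(4\pi t)$, leaving the constant $\frac{dM}{2}\log(4\pi/c)$.

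For the moment term, Lemma~\ref{lem:moment} gives $\mu_2(t)/(4t)\le \mu_2(0)/(4t) + C/4$, which is bounded for $t\ge t_0$ with, say, $t_0=1$. Summing the three bounds gives a uniform upper bound for $t\ge t_0$, which is \eqref{eq:relative entropy bound}.

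The only delicate point is the cancellation of the logarithmic terms, which depends on the precise constant $\frac{d\|u_0\|_1}{2}$ in Lemma~\ref{lem:entropy_estimate}. This constant matches the one in Lemma~\ref{lem:ulogu:G} because the mass is conserved, so no real obstacle arises. The remaining care is in justifying that $u(t)\log u(t)$ is integrable for $t>0$. Note also that the relative entropy is automatically nonnegative by Jensen's inequality, but only the upper bound is required.
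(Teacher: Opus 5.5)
Your proposal is correct and follows the paper's proof essentially verbatim. You combine the identity of Lemma~\ref{lem:ulogu:G} with $E[u(t)]\le -\frac{dM}{2}\log t + C$ from \eqref{eq:entropy:estimate}, the bound $\frac12\|K\|_\infty M^2$ on the interaction term, and the linear moment growth of Lemma~\ref{lem:moment}, so that the logarithmic terms cancel and the remaining terms are bounded for $t\ge t_0$; your added remarks on the integrability of $u\log u$ are a harmless supplement that the paper leaves implicit.
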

\begin{proof}
We begin by the identity obtained from formula \eqref{entropy} and Lemma \ref{lem:ulogu:G}:
\begin{align*}
\int_{\R^d} u \log \frac{u}{G} \, \mathrm{d}x
= E[u(t)] - \frac{1}{2}\int_{\R^d} uK*u \, \mathrm{d}x 
+ \frac{dM}{2}\log (4\pi t) 
+ \frac{\mu_2(t)}{4t}.
\end{align*}
It follows from the estimate \eqref{eq:entropy:estimate} that
\[
E[u(t)] \le -\frac{dM}{2}\log t + C.
\]
Moreover, since $K \in L^\infty(\R^d)$, we apply the Young inequality to obtain
\[
-\frac{1}{2}\int_{\R^d} uK*u \, \mathrm{d}x 
\le \frac{1}{2}\|K\|_{\infty}M^2.
\]
Combining the above estimates with the linear growth estimate \eqref{eq:moment} for the second moment $\mu_2(t)$, we conclude that
\[
\int_{\R^d} u \log \frac{u}{G} \, \mathrm{d}x
\le C + \frac{\mu_2(0)}{4t},
\]
which yields the desired boundedness.
\end{proof}
The next elementary estimate allows us to control the mass of $u$ on measurable sets in terms of the relative entropy with respect to the heat kernel.
\begin{lemma}\label{measure theory}
Let $E \subset \R^d$ be a nonempty measurable set of nonzero measure. Then it follows that
\begin{align*}
\left(\int_E u(x,t) \, \mathrm{d}x\right)
\log \frac{\int_E u(x,t) \, \mathrm{d}x}{\int_E G(x,t) \, \mathrm{d}x}
\le
\int_{\R^d} u\log \frac{u}{G} \, \mathrm{d}x + \frac{1}{e}.
\end{align*}
\end{lemma}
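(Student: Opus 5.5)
This is Jensen's inequality applied to the convex function $\phi(s)=s\log s$, combined with the elementary bound $\phi\ge -1/e$ on the complement of $E$. I will use the shorthand $a=\int_E u\,\d x$ and $b=\int_E G\,\d x$. Since $G(\cdot,t)>0$ everywhere and $|E|>0$, we have $b\in(0,1]$. If $a=0$, the left-hand side vanishes (with the convention $0\log 0=0$). The right-hand side is then at least $-1/e+1/e=0$, because $\int_{\R^d} u\log(u/G)\,\d x=\int_{\R^d}\phi(u/G)\,G\,\d x\ge -1/e$, using $\phi\ge -1/e$ and $\int G=1$. So we may assume $a>0$. We may also assume the relative entropy is finite, since otherwise there is nothing to prove.

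\textbf{Step 1 (Jensen on $E$).} On $E$, consider the probability measure $\d\nu = G\,\d x/b$. By convexity of $\phi$, Jensen's inequality gives
\[
\phi\Bigl(\int_E \frac{u}{G}\,\d\nu\Bigr)\le \int_E \phi\Bigl(\frac uG\Bigr)\d\nu .
\]
The left-hand side equals $\phi(a/b)$. Multiplying through by $b$ yields
\[
a\log\frac ab\le \int_E u\log\frac uG\,\d x .
\]

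\textbf{Step 2 (the complement).} On $E^c$, the pointwise bound $\phi\ge -1/e$ gives
\[
\int_{E^c} u\log\frac uG\,\d x=\int_{E^c}\phi\Bigl(\frac uG\Bigr)G\,\d x\ge -\frac1e\int_{E^c}G\,\d x\ge -\frac1e .
\]
Hence $\int_E u\log(u/G)\,\d x\le \int_{\R^d}u\log(u/G)\,\d x+1/e$, and combining this with Step 1 gives the claim.

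The only point needing care is integrability. The negative part of $u\log(u/G)=\phi(u/G)\,G$ is bounded by $G/e\in L^1$, so splitting the integral over $E$ and $E^c$ is legitimate. The integral over the whole space is well defined in $(-\infty,\infty]$, and in the applications it is finite by Lemma~\ref{lem:relative entropy bound}. Jensen's inequality is valid here because $u/G\in L^1(\nu)$ with integral $a/b$. I do not expect a genuine obstacle in this argument.
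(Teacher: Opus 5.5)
Your proof is correct and follows the paper's argument. Like the paper, you apply Jensen's inequality for $s\mapsto s\log s$ with respect to the probability measure $G\,\d x/b$ on $E$, and then bound the contribution of $E^c$ below by $-1/e$. The only difference is in the $E^c$ step: you use the pointwise bound $s\log s\ge -1/e$ together with $\int_{E^c}G\,\d x\le 1$, whereas the paper applies Jensen a second time on $E^c$ to obtain $(M-a)\log\frac{M-a}{1-b}\ge -1/e$. Your version is slightly cleaner, since it needs no case split on whether $\int_{E^c}G\,\d x<1$.
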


\begin{proof}
Let us denote $a(t)$ and $b(t)$ by
\begin{align*}
a(t) = \int_E u(x,t)  \, \mathrm{d}x,\quad b(t) = \int_E G(x,t)  \, \mathrm{d}x.
\end{align*}
Then the Jensen inequality gives 
\begin{align}
\int_E u\log \dfrac{u}{G}  \, \mathrm{d}x &= \int_E G\left(\dfrac{u}{G}\log \dfrac{u}{G}\right)  \, \mathrm{d}x\notag\\
&\ge b(t) \left(\int_E \dfrac{u}{b(t)}  \, \mathrm{d}x \right)\log \left(\int_E \dfrac{u}{b(t)}  \, \mathrm{d}x\right)\notag\\
&= a(t) \log \dfrac{a(t)}{b(t)}.\label{eq1:measure theory}
\end{align}
If $E = \R^d$, then our claim is immediately obtained. If $E \subsetneqq \R^d$, then it follows from $G(x,t) > 0$ for all $(x,t) \in \R^d \times (0,\infty)$ that
\begin{align*}
b(t) = \int_E G(x,t) < 1\quad \text{for all}\  t > 0.
\end{align*}
Similarly, we obtain
\begin{align}\label{eq2:measure theory}
\int_{E^c} u\log \dfrac{u}{G}  \, \mathrm{d}x &\ge  (M - a(t)) \log \dfrac{M-a(t)}{1-b(t)}.
\end{align}
Therefore combining \eqref{eq1:measure theory} and \eqref{eq2:measure theory},
we get
\begin{align*}
\int_{\R^d} u\log \dfrac{u}{G}  \, \mathrm{d}x \ge a(t) \log \dfrac{a(t)}{b(t)}  + (\|u_0\|_{1} - a(t))\log \dfrac{\|u_0\|_{1} - a(t)}{1-b(t)}.
\end{align*}
Since $a(t) \leq M$ and $b(t) < 1$, we can calculate as follows
\begin{align*}
&(M - a(t) )\log \dfrac{M - a(t)}{1-b(t)}\\
&= (M - a(t)) \log (M - a(t)) -(M - a(t))\log (1-b(t))\\
&\ge -\dfrac{1}{e}.
\end{align*}
Consequently we conclude that
\begin{align*}
\int_{\R^d} u\log \dfrac{u}{G}  \, \mathrm{d}x \ge a(t) \log \dfrac{a(t)}{b(t)} - \dfrac{1}{e},
\end{align*}
which implies our claim.
\end{proof}

It is well known that the $L^p$ norms of solutions to problem \eqref{eq:main} with regular kernels remain uniformly bounded in time. 
For completeness, we recall a  result from \cite{BBKL21}, see also \cite{KS10-spikes} for estimates for the one-dimensional model.

\begin{lemma}[{\cite[Lemma 4.1]{BBKL21}}] \label{lem:bound}
    Let $u$ be a nonnegative solution to problem \eqref{eq:main} with the interaction kernel satisfying $\nabla K \in L^\infty(\mathbb{R}^d)$ and with an initial condition satisfying
\begin{equation}
    u_0 \in L^1(\mathbb{R}^d) \cap L^\infty(\mathbb{R}^d), \quad u_0 \geq 0, \quad M = \int_{\mathbb{R}^d} u_0(x) \, \mathrm{d}x > 0.
\end{equation}
For each $p \in [1, \infty)$ there exists a constant $C_p=C(p, d, M,\|u_0\|_p,\|\nabla K\|_\infty) > 0$ such that
\[
    \|u(t)\|_p \leq C_p \quad\text{for all } t \geq 0.
\]
\end{lemma}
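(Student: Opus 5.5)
We prove the uniform bound on $\|u(t)\|_p$ by an $L^p$ energy estimate followed by an iteration (Moser/Alikakos type) in $p$. Fix $p\in(1,\infty)$; the case $p=1$ is mass conservation, Proposition~\ref{prop:properties}. Multiply the equation in \eqref{eq:main} by $p u^{p-1}$ and integrate. The regularity in Proposition~\ref{prop:properties} justifies the integrations by parts, and gives
\begin{equation*}
\frac{d}{dt}\int_{\R^d} u^p\,\d x + \frac{4(p-1)}{p}\int_{\R^d} |\nabla u^{p/2}|^2\,\d x
= -p(p-1)\int_{\R^d} u^{p-1}\nabla u\cdot(\nabla K*u)\,\d x .
\end{equation*}
We deliberately do not integrate by parts a second time, since that would produce $\Delta K$, about which nothing is assumed. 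Write $u^{p-1}\nabla u = \frac{2}{p}u^{p/2}\nabla u^{p/2}$. Then use $\|\nabla K*u\|_\infty\le \|\nabla K\|_\infty M$ and Cauchy--Schwarz:
\begin{equation*}
\Bigl|p(p-1)\int u^{p-1}\nabla u\cdot(\nabla K*u)\Bigr|\le 2(p-1)\|\nabla K\|_\infty M\,\|u^{p/2}\|_2\,\|\nabla u^{p/2}\|_2 .
\end{equation*}
By Young's inequality, half of this is absorbed into the dissipation. The result is
$\frac{d}{dt}\|u\|_p^p + \frac{2(p-1)}{p}\|\nabla u^{p/2}\|_2^2 \le C(p)\|\nabla K\|_\infty^2M^2\|u\|_p^p$.

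The right-hand side still has linear growth, so the next step is to control $\|u\|_p^p$ by the dissipation plus a lower $L^q$ norm. Set $w=u^{p/2}$. The Gagliardo--Nirenberg (Nash-type) inequality gives
$\|w\|_2\le C\|\nabla w\|_2^{a}\|w\|_{2q/p}^{1-a}$ for some $q<p$ (I would take $q=p/2$, or $q=1$ in the first step). Combining this with Young's inequality yields
\begin{equation*}
C(p)\|\nabla K\|_\infty^2M^2\|w\|_2^2 \le \frac{p-1}{p}\|\nabla w\|_2^2 + C'\bigl(\|u\|_{q}^{q\cdot(\text{power})}\bigr),
\end{equation*}
and also $\|w\|_2^2\le \epsilon\|\nabla w\|_2^2 + C_\epsilon(\ldots)$. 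This produces the differential inequality
$\frac{d}{dt}\|u\|_p^p + \|u\|_p^p \le C\bigl(p,d,\|\nabla K\|_\infty,\sup_s\|u(s)\|_q\bigr)$.
Comparison then gives $\|u(t)\|_p^p\le \|u_0\|_p^p + C$. The key point is that dissipation dominates the source once lower norms are bounded. The initial data enter only through $\|u_0\|_p\le \|u_0\|_1^{1/p}\|u_0\|_\infty^{1-1/p}$.

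To conclude, induct. Start from $q=1$, where $\sup_t\|u(t)\|_1=M$, and obtain the bound for $p=2$. Then pass from $p$ to $2p$, or interpolate between $1$ and any bounded exponent, to reach every finite $p$. The constants are allowed to depend on $p$, so there is no need to track their growth, as one would for $p=\infty$. The main obstacle, though minor, is the bookkeeping in the Gagliardo--Nirenberg step: choosing exponents valid in every dimension $d\ge1$ (including $d=1,2$, where the Sobolev embedding is not used), and making sure the power of $\|\nabla w\|_2$ is strictly below $2$ so that Young's inequality applies. Writing $\|w\|_2^2\le C\|\nabla w\|_2^{2a}\|w\|_1^{2(1-a)}$ with $a=d/(d+2)<1$ handles this uniformly, using $\|w\|_1=\|u\|_{p/2}^{p/2}$ together with the doubling induction. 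Global existence needed to take $t\ge0$ arbitrary follows by the continuation argument mentioned after Proposition~\ref{prop:local:exist}.
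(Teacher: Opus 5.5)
Your argument is correct. The paper does not prove this lemma at all; it imports it from \cite{BBKL21}, so there is no in-paper proof to match. Your route is the standard one. You use the $L^p$ energy identity, control the drift by $\|\nabla K*u\|_\infty\le\|\nabla K\|_\infty M$ without a second integration by parts, absorb into the dissipation, and dominate the linear source with a Nash inequality. This is the same mechanism the paper uses in the proof of Theorem~\ref{thm:regular:decay}.

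The one substantive difference is in the Nash step. You pair $\|\nabla w\|_2$ with $\|w\|_1=\|u\|_{p/2}^{p/2}$, and this has two consequences.
\begin{itemize}
\item You need the doubling induction, and your scheme cannot start below $p=2$, since $\int u^{p/2}\,\d x$ with $p/2<1$ is not controlled by the mass. For $1<p<2$ you must therefore interpolate with $L^2$ and pick up $\|u_0\|_2$.
\item Interpolating a non-dyadic $p$ against a larger dyadic exponent picks up a higher norm of $u_0$. In fact you route everything through $\|u_0\|_\infty$.
\end{itemize}
This is harmless where the paper applies the lemma, since there $u_0\in L^\infty$. It is not quite the dependence $C(p,d,M,\|u_0\|_p,\|\nabla K\|_\infty)$ asserted in the statement.

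You can recover the stated dependence by pairing $\|\nabla w\|_2$ with the conserved quantity $\int w^{2/p}\,\d x=M$ instead, i.e.\ by using the generalized Nash inequality \eqref{generalized-Nash-p}. For $y=\|u\|_p^p$ this gives
\[
y'\le -c_{d,p}\,M^{-\frac{2p}{d(p-1)}}\,y^{1+\frac{2}{d(p-1)}}+\frac{p(p-1)}{2}\|\nabla K\|_\infty^2M^2\,y .
\]
Hence $y(t)\le\max\{\|u_0\|_p^p,\,C(p,d,M,\|\nabla K\|_\infty)\}$ for every $p\in(1,\infty)$ in a single step. There is no induction, and the constant has exactly the stated dependence.
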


The following lemma provides the first step toward understanding the large-time behavior of solutions to problem \eqref{eq:main}.

\begin{lemma}\label{lem:decay}
Assume the hypotheses of Theorem~\ref{thm:regular:decay}.
Let $u$ be a global-in-time solution to problem \eqref{eq:main} such that
\[
    \sup_{t > 0}\|u(t)\|_{r} < \infty \quad \text{for some } r \in (1,\infty).
\]
Then it holds that for all $p \in (1,r)$,
\[
    \lim_{t \to \infty}\|u(t)\|_{p} = 0.
\]
\end{lemma}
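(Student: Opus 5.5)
The plan is to show that the mass of $u(t)$ cannot concentrate on sets of bounded measure. Then $L^p$ decay for $p\in(1,r)$ follows by interpolating with the uniform $L^r$ bound. The tool is Lemma~\ref{measure theory}, combined with the uniform relative-entropy bound of Lemma~\ref{lem:relative entropy bound}.

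\emph{Step 1 (smallness of mass on sets of controlled measure).} Fix $\varepsilon>0$. Let $E\subset\R^d$ be measurable with $|E|\le A\,t^{d/2}$, where $A>0$ is to be chosen. Since $\|G(t)\|_\infty=(4\pi t)^{-d/2}$, we have $b(t)=\int_E G(x,t)\,\d x\le A(4\pi)^{-d/2}$. Lemma~\ref{measure theory} and Lemma~\ref{lem:relative entropy bound} give, for $t\ge t_0$,
\[
a\log\frac{a}{b}\le C_0:=\sup_{s\ge t_0}\int u\log\frac{u}{G}\,\d x+\frac1e .
\]
Suppose $a:=\int_E u\,\d x\ge\varepsilon$. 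Then
\[
\varepsilon\log\frac{\varepsilon}{A(4\pi)^{-d/2}}\le a\log\frac{a}{b}\le C_0 ,
\]
where we used that $a\mapsto a\log(a/b)$ is increasing for $a\ge b/e$. Choosing $A$ small enough in terms of $\varepsilon$ and $C_0$ makes the left-hand side exceed $C_0$, a contradiction. Hence $\int_E u(t)\,\d x<\varepsilon$ for every $E$ with $|E|\le A t^{d/2}$ and every $t\ge t_0$. This uniform-in-$E$ statement is the heart of the argument.

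\emph{Step 2 (level sets).} Let $\lambda>0$. Chebyshev's inequality gives $|\{u(t)>\lambda\}|\le M/\lambda$. Take $\lambda=\lambda(t)=M/(A t^{d/2})$, so that $E_t=\{u(t)>\lambda(t)\}$ satisfies $|E_t|\le At^{d/2}$. By Step 1, $\int_{E_t}u\,\d x<\varepsilon$. Split
\[
\|u\|_p^p=\int_{E_t}u^p+\int_{E_t^c}u^p .
\]
For the second piece, $\int_{E_t^c}u^p\le\lambda(t)^{p-1}M\to0$ as $t\to\infty$, since $p>1$. For the first piece, Hölder's inequality (interpolating $L^1(E_t)$ and $L^r(E_t)$) gives
\[
\int_{E_t}u^p\le\Bigl(\int_{E_t}u\Bigr)^{\theta p}\|u\|_r^{(1-\theta)p}\le \varepsilon^{\theta p}\Bigl(\sup_t\|u(t)\|_r\Bigr)^{(1-\theta)p},
\]
where $\frac1p=\theta+\frac{1-\theta}{r}$ and $\theta\in(0,1]$ because $p<r$. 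Letting $t\to\infty$ and then $\varepsilon\to0$ yields $\|u(t)\|_p\to0$.

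\emph{Main obstacle.} I expect the delicate part to be Step 1. It requires the monotonicity of $a\mapsto a\log(a/b)$ and explicit handling of the dependence of $A$ on $\varepsilon$. One must also verify that the hypotheses of Lemma~\ref{lem:relative entropy bound} hold: $E[u_0]<\infty$ follows from $u_0\log u_0\in L^1$ together with $K\in L^\infty$. The rest is routine interpolation. Note that the argument actually yields more than $\|u\|_p\to0$, namely control at the heat scale. Only the qualitative limit is needed here, presumably to be bootstrapped to the rate in Theorem~\ref{thm:regular:decay} via the Duhamel formula.
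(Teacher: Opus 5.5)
Your proof is correct and follows the paper's strategy. Lemma~\ref{measure theory} together with the relative-entropy bound makes the mass on a high-density set small, and then $\|u(t)\|_p^p$ is split, with $L^1$--$L^r$ interpolation on that set and a pointwise bound on its complement. The only difference is the choice of set: the paper takes $A(t)=\{u\ge G(\cdot,t)\log t\}$, where $a/b\ge\log t$ gives the explicit bound $\int_{A(t)}u\le C/\log\log t$ with no $\varepsilon$--$A$ bookkeeping, whereas you use the level set $\{u>M/(At^{d/2})\}$, controlled by Chebyshev and the monotonicity of $a\log(a/b)$.
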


\begin{proof}
It suffices to consider $t$ large enough such that
\[
    \max\{\|u_0\|_{1}, e\} < \log t.
\]
Define the set
\[
    A(t) := \{x \in \R^d : u(x,t) \ge G(x,t) \log t \}.
\]
We first show that $A(t) \neq \R^d$. Indeed, if $A(t) = \R^d$ for large $t$, then integrating over $\R^d$ yields
\[
    \|u_0\|_{1} = \int_{\R^d}u(x,t)  \, \mathrm{d}x \ge \log t \int_{\R^d} G(x,t)  \, \mathrm{d}x = \log t,
\]
which contradicts our choice of $t$. 
Assume for now that $A(t)$ is nonempty for large $t$. By the definition of $A(t)$, we have
\[
    \int_{A(t)} u(x,t)  \, \mathrm{d}x \ge \log t \int_{A(t)} G(x,t) \, \mathrm{d}x.
\]
This immediately implies
\[
    \left(\int_{A(t)} u(x,t)  \, \mathrm{d}x\right) \log \left(\dfrac{\int_{A(t)} u(x,t)  \, \mathrm{d}x}{\int_{A(t)} G(x,t)  \, \mathrm{d}x}\right) \ge \left(\int_{A(t)} u(x,t)  \, \mathrm{d}x\right) \log \log t.
\]
Since $e < \log t$, we know that $\log \log t > 0$. Applying Lemma~\ref{measure theory} with $E = A(t)$, we obtain
\[
    \int_{A(t)} u(x,t)  \, \mathrm{d}x \leq \dfrac{\int_{\R^d} u\log \left(\frac{u}{G}\right)  \, \mathrm{d}x + \frac{1}{e}}{\log \log t}.
\]
In view of the relative entropy bound \eqref{eq:relative entropy bound}, this implies
\[
    \lim_{t\to\infty} \int_{A(t)} u(x,t) \, \mathrm{d}x = 0.
\]
Therefore, for $p \in (1,r)$, splitting the integral and applying the H\"older inequality (with $\alpha = \frac{r-p}{p(r-1)}$) yields
\begin{align*}
    \int_{\R^d} |u(x,t)|^p  \, \mathrm{d}x &= \int_{A(t)}|u(x,t)|^p  \, \mathrm{d}x + \int_{A(t)^c}|u(x,t)|^p  \, \mathrm{d}x\\
    &\leq \left(\int_{A(t)}|u(x,t)|  \, \mathrm{d}x\right)^{p\alpha} \left(\int_{A(t)}|u(x,t)|^r  \, \mathrm{d}x\right)^\frac{p(1-\alpha)}{r}\\
    &\quad + (\log t)^{p-1}\int_{A(t)^c}u(x,t) G(x,t)^{p-1}  \, \mathrm{d}x.
\end{align*}
Moreover, standard properties of the heat kernel $G$ allow us to estimate the second term as follows:
\begin{align*}
    \int_{\R^d} |u(x,t)|^p  \, \mathrm{d}x &\leq \left(\int_{A(t)}|u(x,t)|  \, \mathrm{d}x\right)^{p\alpha} \left(\int_{A(t)}|u(x,t)|^r  \, \mathrm{d}x\right)^\frac{p(1-\alpha)}{r}\\
    &\quad + (\log t)^{p-1}\|G(t)\|_{\infty}^{p-1} \int_{A(t)^c}u(x,t)  \, \mathrm{d}x\\
    &\leq \left(\int_{A(t)}|u(x,t)|  \, \mathrm{d}x\right)^{p\alpha} \left(\int_{A(t)}|u(x,t)|^r  \, \mathrm{d}x\right)^\frac{p(1-\alpha)}{r}\\
    &\quad + C^{p-1} (\log t)^{p-1} t^{-\frac{d(p-1)}{2}}\|u_0\|_{1}.
\end{align*}
Since the $L^r$ norm is bounded by assumption and $t^{-\frac{d(p-1)}{2}}(\log t)^{p-1} \to 0$ as $t \to \infty$, we conclude that for any $p \in (1,r)$,
\[
    \lim_{t \to \infty}\|u(t)\|_{p} = 0.
\]
Finally, if $A(t)$ is empty for large $t$, the claim easily follows since the estimate on $A(t)^c$ directly applies to the whole space $\R^d$. 
\end{proof}

Finally, we refine Lemma \ref{lem:decay} to obtain optimal decay rates, thereby completing the proof of Theorem \ref{thm:regular:decay}.

\begin{proof}[Proof of Theorem \ref{thm:regular:decay}]
Detailed calculations for $d=1$ can be found in \cite[Thm.~2.5]{KS10-spikes}; 
we therefore assume $d\ge 2$ in what follows.

By Lemma \ref{lem:bound} and Lemma \ref{lem:decay}, we have $\lim_{t \to \infty} \|u(t)\|_p = 0$ for each $p\in (1,\infty)$. 
First, we  establish the optimal decay estimate for every \(p\in [2,\infty)\). 
Set
\[
w=u^{\frac{p}{2}},
\qquad
Y_p(t)=\|u(t)\|_p^p=\|w(t)\|_2^2.
\]
Testing equation \eqref{eq:main} with \(u^{p-1}\) gives
\begin{equation}\label{Lp-energy-identity}
	\begin{split}
		\frac{1}{p}\frac{d}{dt}Y_p(t)
		={}&-\frac{4(p-1)}{p^2}\|\nabla w(t)\|_2^2\\
		&-\frac{2(p-1)}{p}
		\int_{\mathbb R^d}
		w(t)\nabla w(t)\cdot (\nabla K*u(t)) \, \mathrm{d}x.
	\end{split}
\end{equation}
We decompose
\[
L^d(\R^d)\ni \nabla K=(\nabla K-\Psi)+\Psi,
\]
where \(\Psi\in C_c^\infty(\mathbb R^d;\mathbb R^d)\) is chosen so that
\[
\|\nabla K-\Psi\|_d\le \delta.
\]

We first consider \(d\ge 3\). By the H\"older inequality, the Sobolev inequality applied to \(w=u^{p/2}\), and Young's convolution inequality, we obtain
\begin{equation}\label{Lp-first-term-dge3}
	\begin{split}
		\left|
		\int_{\mathbb R^d}
		w\nabla w\cdot ((\nabla K-\Psi)*u) \, \mathrm{d}x
		\right|
		&\le
		\|w\|_{\frac{2d}{d-2}}
		\|\nabla w\|_2
		\|(\nabla K-\Psi)*u\|_d\\
		&\le
		C_S\|\nabla w\|_2^2
		\|\nabla K-\Psi\|_d\|u\|_1\\
		&\le
		C_S\delta M\|\nabla w\|_2^2.
	\end{split}
\end{equation}
Similarly,
\begin{equation}\label{Lp-second-term-dge3}
	\begin{split}
		\left|
		\int_{\mathbb R^d}
		w\nabla w\cdot (\Psi*u) \, \mathrm{d}x
		\right|
		&\le
		\|w\|_{\frac{2d}{d-2}}
		\|\nabla w\|_2
		\|\Psi*u\|_d\\
		&\le
		C_S\|\Psi\|_1\|u(t)\|_d
		\|\nabla w\|_2^2.
	\end{split}
\end{equation}
Consequently,
\begin{equation}\label{Lp-differential-dge3}
	\begin{split}
		\frac{1}{p}\frac{d}{dt}Y_p(t)
		\leq
		\Bigg[
		-\frac{4(p-1)}{p^2}
		+\frac{2(p-1)}{p}C_S
		\Big(
		\delta M+\|\Psi\|_1\|u(t)\|_d
		\Big)
		\Bigg]
		\|\nabla w(t)\|_2^2.
	\end{split}
\end{equation}
We first choose \(\delta>0\) sufficiently small and then use
\[
\lim_{t\to\infty}\|u(t)\|_d=0
\]
to find \(T_p>0\) such that
\[
C_S\Big(
\delta M+\|\Psi\|_1\|u(t)\|_d
\Big)\le \frac1p,
\qquad t\ge T_p.
\]
It follows from \eqref{Lp-differential-dge3} that
\begin{equation}\label{Lp-dissipation}
	\frac{d}{dt}Y_p(t)
	\le
	-c_p\|\nabla u^{\frac{p}{2}}(t)\|_2^2,
	\qquad t\ge T_p,
\end{equation}
where, for example, one may take
\[
c_p=\frac{2(p-1)}p>0.
\]

We now establish the analogous estimate when \(d=2\). We use the following consequences of the two-dimensional Gagliardo--Nirenberg inequality:
\begin{equation}\label{GN-d2-mass-p}
	\|u^\frac{p}{2}\|_4
	\le
	C_p M^{\frac{1}{4}}
	\|\nabla u^{\frac{p}{2}}\|_2^{1-\frac{1}{2p}}
\end{equation}
and
\begin{equation}\label{GN-d2-four-thirds-p}
	\|u\|_{\frac{3}{4}}
	\le
	C_p M^{\frac{3}{4}}
	\|\nabla u^{\frac{p}{2}}\|_2^{\frac{1}{2p}}.
\end{equation}
Multiplying these inequalities gives
\begin{equation}\label{GN-d2-product-mass}
	\|u^{\frac{p}{2}}\|_4\|u\|_{\frac{4}{3}}
	\le
	C_pM\|\nabla u^{p/2}\|_2.
\end{equation}
Therefore,
\begin{equation}\label{Lp-first-term-d2}
	\begin{split}
		\left|
		\int_{\mathbb R^2}
		u^{\frac{p}{2}}\nabla u^{\frac{p}{2}}\cdot
		((\nabla K-\Psi)*u) \, \mathrm{d}x
		\right|
		&\le
		\|u^{\frac{p}{2}}\|_4
		\|\nabla u^{\frac{p}{2}}\|_2
		\|(\nabla K-\Psi)*u\|_4\\
		&\le
		\delta
		\|u^{\frac{p}{2}}\|_4
		\|u\|_{\frac{4}{3}}
		\|\nabla u^{\frac{p}{2}}\|_2\\
		&\le
		C_p\delta M
		\|\nabla u^{\frac{p}{2}}\|_2^2.
	\end{split}
\end{equation}
For the term containing \(\Psi\), we use
\begin{equation}\label{GN-d2-L2-p}
	\|u^{\frac{p}{2}}\|_4
	\le
	C_p\|u\|_2^{\frac{1}{2}}
	\|\nabla u^{\frac{p}{2}}\|_2^{1-\frac1p}
\end{equation}
and
\begin{equation}\label{GN-d2-L4-p}
	\|u\|_4
	\le
	C_p\|u\|_2^{1/2}
	\|\nabla u^{p/2}\|_2^{\frac1p}.
\end{equation}
Hence,
\begin{equation}\label{GN-d2-product-L2}
	\|u^{\frac{p}{2}}\|_4\|u\|_4
	\le
	C_p\|u\|_2
	\|\nabla u^{\frac{p}{2}}\|_2.
\end{equation}
Young's convolution inequality then gives
\begin{equation}\label{Lp-second-term-d2}
	\begin{split}
		\left|
		\int_{\mathbb R^2}
		u^{\frac{p}{2}}\nabla u^{\frac{p}{2}}\cdot(\Psi*u) \, \mathrm{d}x
		\right|
		&\le
		\|u^{\frac{p}{2}}\|_4
		\|\nabla u^{\frac{p}{2}}\|_2
		\|\Psi*u\|_4\\
		&\le
		\|\Psi\|_1
		\|u^{\frac{p}{2}}\|_4
		\|u\|_4
		\|\nabla u^{\frac{p}{2}}\|_2\\
		&\le
		C_p\|\Psi\|_1\|u(t)\|_2
		\|\nabla u^{\frac{p}{2}}\|_2^2.
	\end{split}
\end{equation}
Combining \eqref{Lp-first-term-d2} and
\eqref{Lp-second-term-d2} with
\eqref{Lp-energy-identity}, we obtain
\[
\begin{split}
	\frac{1}{p}\frac{d}{dt}Y_p(t)
	\leq
	\Bigg[
	-\frac{4(p-1)}{p^2}
	+\frac{2(p-1)}p C_p
	\Big(
	\delta M+\|\Psi\|_1\|u(t)\|_2
	\Big)
	\Bigg]
	\|\nabla u^{\frac{p}{2}}(t)\|_2^2.
\end{split}
\]
Since
\[
\lim_{t\to\infty}\|u(t)\|_2=0,
\]
we may again choose \(\delta>0\) sufficiently small and then \(T_p>0\) sufficiently large so that \eqref{Lp-dissipation} holds also in dimension \(d=2\).

We next use the generalized Nash inequality
\begin{equation}\label{generalized-Nash-p}
	\|\nabla u^{\frac{p}{2}}\|_2^2
	\ge
	C_{d,p}
	M^{-\frac{2p}{d(p-1)}}
	\left(\|u\|_p^p\right)^{
		1+\frac{2}{d(p-1)}
	}.
\end{equation}
Indeed, this is the Gagliardo--Nirenberg--Nash inequality applied to \(u^{p/2}\), with the \(L^{\frac{2}{p}}\)-quantity determined by the mass:
\[
\int_{\mathbb R^d}
\left(u^{\frac{p}{2}}\right)^{\frac{2}{p}} \, \mathrm{d}x
=
\int_{\mathbb R^d}u \, \mathrm{d}x=M.
\]
Substitution of \eqref{generalized-Nash-p} into
\eqref{Lp-dissipation} yields
\begin{equation}\label{Lp-final-ODE}
	\frac{d}{dt}Y_p(t)
	\le
	-C_{d,p,M}
	Y_p(t)^{
		1+\frac{2}{d(p-1)}
	},
	\qquad t\ge T_p.
\end{equation}
Let
\[
\alpha_p=\frac{2}{d(p-1)}.
\]
Integrating differential inequality \eqref{Lp-final-ODE}, we obtain
\[
Y_p(t)^{-\alpha_p}
\ge
Y_p(T_p)^{-\alpha_p}
+
\alpha_p C_{d,p,M}(t-T_p),
\qquad t\ge T_p.
\]
Consequently,
\[
Y_p(t)
\le
C_p(t-T_p)^{-\frac{d(p-1)}2},
\qquad t\ge T_p+1.
\]
Since \(Y_p(t)=\|u(t)\|_p^p\), taking the \(p\)-th root gives
\begin{equation}\label{optimal-Lp-decay-pgt2}
	\|u(t)\|_p
	\le
	C_p t^{-\frac{d(p-1)}{2p}}
	=
	C_p t^{-\frac d2\left(1-\frac1p\right)},
	\qquad t\ge T_p+1.
\end{equation}
The {\it a priori} bounds from Lemma~\ref{lem:bound} allow us to enlarge \(C_p\) so that estimate \eqref{optimal-Lp-decay-pgt2} holds for every \(t>0\).

For \(1<p\le2\), the estimate follows from interpolation between \(L^1\) and \(L^2\).
For \(p=1\), estimate \eqref{optimal-Lp-decay-pgt2} reduces to the mass conservation identity
\(
\|u(t)\|_1=M.
\)
For $p=\infty$, the optimal decay of $\|u(t)\|_\infty$ can be directly obtained via the Duhamel representation of solution 
to problem \eqref{eq:main}, in the same way as in the proof of Lemma \ref{lem:nonlinear-term:heat}, below.
\end{proof}

\subsection{Decay for singular-repulsive kernels}

The main result of this subsection, formulated in the following theorem, states that the heat-like $L^p$-decay estimates can also be achieved for certain singular kernels. These singular kernels can be decomposed into a sum of a repulsive part 
(see Definition \ref{def:repulsive kernel})
and a part that is small in the critical Marcinkiewicz space $L^{d,\infty}(\mathbb{R}^d)$.

\begin{theorem}\label{thm:decay:repulsive}
Let \(d\ge2\), and let \(u\) be a nonnegative solution of
\eqref{eq:main}--\eqref{ini:L1Linf}.
Assume also \eqref{nablaK:ass}.
Suppose that \(K=K_1+K_2\), where
\[
\Delta K_1\le0
\quad\text{in the sense of distributions},
\qquad
\nabla K_2\in L^{d,\infty}(\mathbb R^d).
\]
Then there exists \(D_d>0\) such that, if
\[
\|\nabla K_2\|_{L^{d,\infty}}\|u_0\|_1\le D_d,
\]
then, for every \(q\in[1,\infty)\),
\[
\|u(t)\|_q
\le C_q\,t^{-\frac d2(1-\frac1q)},
\qquad t>0,
\]
where \(C_q\) is independent of \(t\).
\end{theorem}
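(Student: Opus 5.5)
The proof will follow the energy method used for Theorem~\ref{thm:regular:decay}. The difference is that the repulsive part is handled by its sign, not by a decay argument, so the differential inequality for $Y_q(t)=\|u(t)\|_q^q$ holds for \emph{all} $t>0$.

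\textbf{Energy identity and the repulsive term.} For $q\ge2$ I test \eqref{eq:main} with $u^{q-1}$, which is justified by Proposition~\ref{prop:properties}. With $w=u^{q/2}$ this gives \eqref{Lp-energy-identity}, with $\nabla K$ split into $\nabla K_1+\nabla K_2$. The $K_1$ contribution is
\[
-(q-1)\int u^{q-1}\nabla u\cdot(\nabla K_1*u)\,\d x=-\frac{q-1}{q}\int \nabla(u^q)\cdot(\nabla K_1*u)\,\d x=\frac{q-1}{q}\int u^q\,(\Delta K_1*u)\,\d x,
\]
and this is $\le0$ because $\Delta K_1\le0$ and $u\ge0$. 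To make the integration by parts rigorous I will mollify $K_1$: set $K_1^\varepsilon=K_1*\rho_\varepsilon$, which is still superharmonic. Using \eqref{nablaK:ass} together with $u(t)\in W^{2,p}\cap L^1$, I then pass to the limit $\varepsilon\to0$. So the repulsive part can simply be dropped from the identity.

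\textbf{The small weak-$L^d$ part.} The $K_2$ term is $-\frac{2(q-1)}{q}\int w\nabla w\cdot(\nabla K_2*u)\,\d x$. By H\"older's inequality it is bounded by $\|w\nabla w\|_{d/(d-1)}\|\nabla K_2*u\|_{d}$, but Young's inequality in weak spaces fails at $L^1*L^{d,\infty}\to L^d$. I would therefore use the Lorentz-space versions. O'Neil's inequality gives $\|\nabla K_2*u\|_{L^{d,\infty}}\le C\|\nabla K_2\|_{L^{d,\infty}}\|u\|_1$, which is still only weak type. So I pair it with $\|w\nabla w\|_{L^{d/(d-1),1}}$.
\begin{itemize}
\item For $d\ge3$, I use $\|w\|_{L^{2d/(d-2),2}}\le C\|\nabla w\|_2$, the sharp Lorentz Sobolev embedding, together with H\"older's inequality in Lorentz spaces: $\|w\nabla w\|_{L^{d/(d-1),1}}\le C\|w\|_{L^{2d/(d-2),2}}\|\nabla w\|_{L^{2,2}}$.
\item For $d=2$, the endpoint is delicate, and this is the main obstacle. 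I would first try H\"older in the form $L^{4,4}\times L^2\times L^{4,?}$ with the convolution placed in $L^{4}$ via $u\in L^{4/3}$, that is, using Young/O'Neil $L^{2,\infty}*L^{4/3}\to L^4$ as in \eqref{Lp-first-term-d2}. The Gagliardo--Nirenberg product bound \eqref{GN-d2-product-mass} then gives $C\|\nabla K_2\|_{L^{2,\infty}}M\|\nabla w\|_2^2$.
\end{itemize}
In both cases the $K_2$ term is $\le C_{d,q}\|\nabla K_2\|_{L^{d,\infty}}M\|\nabla w\|_2^2$. The smallness $\|\nabla K_2\|_{L^{d,\infty}}M\le D_d$ then yields $\frac{d}{dt}Y_q\le -c_q\|\nabla u^{q/2}\|_2^2$ for all $t>0$. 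Making $D_d$ depend only on $d$, and not on $q$, requires checking that the constants $C_{d,q}(q-1)/q$ against $4(q-1)/q^2$ are controlled. I would do this by fixing the threshold for a single exponent, say $q=2$ for $d\ge3$ or $q$ in a fixed range, and then obtaining the other $q$ by interpolation and iteration. This bookkeeping is the second point I would expect trouble with.

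\textbf{Conclusion.} With the dissipation inequality valid for all $t>0$, the generalized Nash inequality \eqref{generalized-Nash-p} gives $Y_q'\le -C\,Y_q^{1+2/(d(q-1))}$. Integrating from $0$ gives $Y_q(t)\le C t^{-d(q-1)/2}$ directly, with no need for the entropy argument and with constants depending only on $q,d,M$. This covers $q\in[2,\infty)$ for the exponents handled above. The range $q\in(1,2)$ follows by interpolation with mass conservation, and $q=1$ is trivial. Global existence follows from these a priori bounds together with the continuation argument of Proposition~\ref{prop:local:exist}.
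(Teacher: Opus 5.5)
Your handling of the repulsive part is fine. Your estimate of the $K_2$ term at a \emph{fixed} exponent is also fine. The pairing of $L^{d/(d-1),1}$ with $L^{d,\infty}$ for $d\ge3$, and the Gagliardo--Nirenberg product bound for $d=2$, both give $\bigl|\int w\nabla w\cdot(\nabla K_2*u)\,\mathrm{d}x\bigr|\le C\,AM\|\nabla w\|_2^2$ with $A=\|\nabla K_2\|_{L^{d,\infty}}$. So $d=2$ is not the real obstacle.

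The gap is the step you call bookkeeping. In \eqref{Lp-energy-identity} the dissipation carries the factor $4(q-1)/q^2$, while the $K_2$ term carries $2(q-1)/q$. Absorption therefore requires $C\,AM<2/q$. This degenerates as $q\to\infty$ (in $d=2$ your $C$ also grows with $q$), so it is not a matter of tracking constants. With $D_d$ fixed independently of $q$, as the statement demands, your argument gives the decay only for $q\le q_0(D_d)$. Interpolation with the conserved $L^1$ norm cannot reach any exponent above $q_0$.

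You do not say what the ``iteration'' is. The obvious candidate is a Duhamel bootstrap as in the $L^\infty$ step of Proposition~\ref{prop:critical:optimal:decay}, but it is unavailable here. The hypotheses only place $\nabla K_1$ in $L^1+L^\infty$ (e.g.\ $K_1=-|x|$), which gives no decaying bound on $\nabla K_1*u$.

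The missing idea is the paper's Step 2. Smallness is used only once, at $p=2$. For higher exponents the $K_2$ term is treated as a perturbation controlled by a low norm whose sharp decay is already known.

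\begin{itemize}
\item Set $v=u^{p/2}$, $y=\|u\|_p^p$ and $b=2d/(2d-1)$. One bounds the $K_2$ term by $CA\|\nabla v\|_2\|v\|_{2d/(d-1)}\|u\|_b\le CA\|u\|_b\|\nabla v\|_2^{3/2}y^{1/4}$.
\item Young's inequality, together with $\|u(t)\|_b\le CMt^{-1/4}$ from the $L^2$ step, gives $y'+c_p\|\nabla v\|_2^2\le C_p(AM)^4t^{-1}y$. This is a scale-critical linear term, and its size does not matter.
\item Nash's inequality with the inductive bound on $\|u\|_{p/2}$ gives $\|\nabla v\|_2^2\ge c_pt^{p-2}y^{1+2/d}$.
\item With $\alpha=d(p-1)/2$, the function $z=t^{\alpha}y$ then satisfies $z'\le t^{-1}\bigl((\alpha+B_p)z-c_pz^{1+2/d}\bigr)$ with $z(0^+)=0$. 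Hence $z\le\bigl((\alpha+B_p)/c_p\bigr)^{d/2}$ for every $p=2^k$, whatever the size of $B_p$.
\end{itemize}

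Your direct integration of $Y_q'\le-CY_q^{1+2/(d(q-1))}$ from $t=0$ is a good substitute for the paper's Step 1 in the range where absorption works. Without this perturbative dyadic induction, however, the proof does not cover all $q\in[1,\infty)$ with a single $q$-independent $D_d$.
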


\begin{proof}
Set
\[
M=\|u_0\|_1,
\qquad
A=\|\nabla K_2\|_{L^{d,\infty}},
\]
and introduce
\[
a=\frac{2d}{d-1},
\qquad
b=\frac{2d}{2d-1}.
\]
By the weak Young inequality,
\begin{equation}\label{eq:wy-compact}
\|\nabla K_2*u\|_{2d}
\le C_d A\|u\|_b.
\end{equation}
In the following, we also 
use the Gagliardo--Nirenberg inequalities
\begin{equation}\label{eq:GN-compact}
\|u\|_a
 \le C_d\|\nabla u\|_2^{\frac{d+1}{d+2}}
          \|u\|_1^{\frac1{d+2}},
\qquad
\|u\|_b
 \le C_d\|\nabla u\|_2^{\frac1{d+2}}
          \|u\|_1^{\frac{d+1}{d+2}},
\end{equation}
which are valid for \(d\ge2\).

\medskip
\noindent
\textit{Step 1: \(L^2\)-decay.}
Multiplying equation \eqref{eq:main} by \(u\) gives
\[
\frac12\frac d{dt}\|u\|_2^2
=
-\|\nabla u\|_2^2
-\int_{\mathbb R^d}
u\nabla u\cdot(\nabla K_1*u) \, \mathrm{d}x
-\int_{\mathbb R^d}
u\nabla u\cdot(\nabla K_2*u) \, \mathrm{d}x.
\]
Since \(u\ge0\) and \(\Delta K_1\le0\),
\[
-\int_{\R^d} u\nabla u\cdot(\nabla K_1*u) \, \mathrm{d}x
=
\frac12\int_{\R^d} u^2(\Delta K_1*u) \, \mathrm{d}x\le0.
\]
Moreover, by the H\"older inequality, inequalities \eqref{eq:wy-compact} and 
\eqref{eq:GN-compact}, we arrive at
\begin{align*}
\left|
\int_{\R^d} u\nabla u\cdot(\nabla K_2*u) \, \mathrm{d}x
\right|
&\le
\|\nabla u\|_2\|u\|_a\|\nabla K_2*u\|_{2d} \\
&\le
C_d A\|\nabla u\|_2\|u\|_a\|u\|_b \\
&\le
C_d AM\|\nabla u\|_2^2.
\end{align*}
Choose \(D_d>0\) so that \(C_dD_d\le1/2\). Then
\begin{equation}\label{eq:L2-energy-compact}
\frac d{dt}\|u\|_2^2+c_d\|\nabla u\|_2^2\le0.
\end{equation}
The Nash inequality
\[
\|u\|_2^{2+\frac4d}
\le C_d M^{\frac4d}\|\nabla u\|_2^2
\]
therefore gives
\[
\frac d{dt}\|u\|_2^2
+c_dM^{-\frac4d}
\bigl(\|u\|_2^2\bigr)^{1+\frac2d}
\le0.
\]
Consequently,
\begin{equation}\label{eq:L2-decay-compact}
\|u(t)\|_2\le C_dM\,t^{-\frac{d}{4}}.
\end{equation}
Interpolation with the conserved \(L^1\)-norm yields
\begin{equation}\label{eq:low-p-decay}
\|u(t)\|_q
\le C_qt^{-\frac d2(1-\frac1q)},
\qquad 1\le q\le2.
\end{equation}
In particular,
\begin{equation}\label{eq:b-decay}
\|u(t)\|_b\le C M t^{-\frac{1}{4}}.
\end{equation}

\medskip
\noindent
\textit{Step 2: dyadic \(L^p\)-estimates.}
Let \(p>2\), set
\[
v=u^{\frac{p}{2}},
\qquad
y(t)=\|u(t)\|_p^p=\|v(t)\|_2^2.
\]
Multiplying the equation by \(pu^{p-1}\), and using
\(\Delta K_1*u\le0\), gives
\begin{equation}\label{eq:p-energy-start}
y'
+\frac{4(p-1)}p\|\nabla v\|_2^2
\le
2(p-1)
\left|
\int_{\R^d} v\nabla v\cdot(\nabla K_2*u) \, \mathrm{d}x
\right|.
\end{equation}
By H\"older, \eqref{eq:wy-compact}, and
\[
\|v\|_a
\le C_d\|\nabla v\|_2^{\frac{1}{2}}\|v\|_2^{\frac{1}{2}},
\]
we obtain
\begin{align*}
\left|
\int_{\R^d} v\nabla v\cdot(\nabla K_2*u) \, \mathrm{d}x
\right|
&\le
C_d A\|\nabla v\|_2\|v\|_a\|u\|_b \\
&\le
C_dA\|u\|_b
\|\nabla v\|_2^{\frac{3}{2}}y^{\frac{1}{4}}.
\end{align*}
The Young inequality and \eqref{eq:b-decay} imply
\begin{equation}\label{eq:p-energy-compact}
y'+c_p\|\nabla v\|_2^2
\le C_pA^4\|u\|_b^4y
\le \frac{C_p(AM)^4}{t}\,y.
\end{equation}

Assume inductively that
\[
\|u(t)\|_{\frac{p}{2}}
\le C_{p/2}
t^{-\frac d2(1-\frac2p)}.
\]
The Nash inequality applied to \(v\) gives
\[
\|\nabla v\|_2^2
\ge
C_d
\frac{y^{1+\frac2d}}
{\|v\|_1^{4/d}}
=
C_d
\frac{y^{1+\frac2d}}
{\|u\|_{p/2}^{2p/d}}
\ge
c_p t^{p-2}y^{1+\frac2d}.
\]
Hence
\begin{equation}\label{eq:ode-compact}
y'
+c_pt^{p-2}y^{1+\frac2d}
\le
\frac{B_p}{t}y.
\end{equation}
Let
\[
\alpha=\frac d2(p-1),
\qquad
z(t)=t^\alpha y(t).
\]
Using
\[
p-2-\frac{2\alpha}{d}=-1,
\]
we infer from \eqref{eq:ode-compact} that
\[
z'(t)
\le
\frac1t
\left((\alpha+B_p)z-c_pz^{1+\frac2d}\right).
\]
Since \(u_0\in L^1 (\R^d)\cap L^\infty(\R^d)\), \(y(t)\) is bounded near
\(t=0\), and therefore \(z(t)\to0\) as \(t\downarrow0\).
The preceding differential inequality then implies
\[
z(t)\le
\left(\frac{\alpha+B_p}{c_p}\right)^{\frac{d}{2}}.
\]
Thus
\[
\|u(t)\|_p^p=y(t)
\le C_pt^{-\frac d2(p-1)},
\]
or equivalently,
\[
\|u(t)\|_p
\le
C_pt^{-\frac d2(1-\frac1p)}.
\]

Starting with \(p=2\), this argument gives the estimate for all
dyadic exponents \(p=2^k\). Interpolation between \(L^1\) and a
sufficiently large dyadic \(L^p\)-space yields the claimed estimate
for every finite \(q\ge1\).
\end{proof}

\section{Heat kernel asymptotics}

 In this section, we complete the proofs of Theorems \ref{thm:regular:asymptotics:intro} and \ref{thm:heat:kernel:intro} using the $L^p$-decay estimates derived previously. We analyze solutions to the nonlinear Cauchy problem \eqref{eq:main} defined by the Duhamel representation \eqref{duhamel}. 
Our primary objective is to establish the conditions under which the nonlinear integral term over the interval $[0, t]$ decays faster than the heat kernel.

\begin{lemma}\label{lem:nonlinear-term:heat}
Let \(d\geq 2\), and suppose that one of the following assumptions holds:
\begin{enumerate}
    \item
    \(\nabla K\in L^{r,\infty}(\mathbb{R}^d)\) for some
    \(r\in(1,d)\);
    \item
    \(\nabla K\in L^d(\mathbb{R}^d)\).
\end{enumerate}
Assume that, for every \(q\in[1,\infty)\), there exists a constant
\(C_q>0\) such that
\begin{equation}\label{lem:heat-decay}
    \|u(t)\|_q
    \leq C_q t^{-\frac d2\left(1-\frac1q\right)}
    \qquad \text{for all } t>0.
\end{equation}
Then, for every \(p\in[1,\infty]\),
\begin{equation}\label{eq:late-nonlinear-term}
    t^{\frac d2\left(1-\frac1p\right)}
    \left\|
        \int_{0}^t
        \nabla\cdot e^{(t-s)\Delta}
        \bigl(u(s)(\nabla K*u(s))\bigr) \, \mathrm{d}s
    \right\|_p
    \longrightarrow 0
\end{equation}
as \(t\to\infty\).
\end{lemma}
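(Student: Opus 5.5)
We split the Duhamel integral into $\int_0^{t/2}$ and $\int_{t/2}^t$ and estimate each separately, using the heat semigroup bound $\|\nabla e^{\tau\Delta}f\|_p\le C\tau^{-\frac12-\frac d2(\frac1m-\frac1p)}\|f\|_m$ for $1\le m\le p\le\infty$. The quantity to control is the nonlinearity $N(s)=u(s)(\nabla K*u(s))$. From the decay hypothesis \eqref{lem:heat-decay} we first derive a decay rate for $\|N(s)\|_m$ strictly faster than the scaling rate $s^{-\frac d2(1-\frac1m)-\frac12}$.

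\emph{Case (1).} Let $\nabla K\in L^{r,\infty}$ with $r<d$. By the weak Young inequality, $\|\nabla K*u\|_\sigma\le C\|u\|_\rho$ with $1+\frac1\sigma=\frac1r+\frac1\rho$. Combining this with H\"older and \eqref{lem:heat-decay} gives
\[
\|N(s)\|_m\le C s^{-\frac d2(1-\frac1m)-\frac d{2}(1-\frac1r)}.
\]
Since $\frac d2(1-\frac1r)>\frac12$ exactly when $r>\frac{d}{d-1}$, this is the relevant gain. If instead $r\le d/(d-1)$, the $L^{r,\infty}$ part must be handled near the origin. There we would write $\nabla K=k_1+k_2$ with $k_1=\nabla K\chi_{|x|\le1}\in L^{1}$ (from Remark \ref{rem:L1Linfty}, since $r>1$) and the tail $k_2\in L^{r,\infty}\cap L^\infty\subset L^{r',\infty}$ for every $r'\in[r,\infty]$. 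Choosing $r'\in(\frac d{d-1},d)$ for $k_2$, the $k_1$ part gives decay like $s^{-\frac d2(1-\frac1m)-\frac d2}$, and $\frac d2>\frac12$ since $d\ge2$. In all sub-cases we obtain $\|N(s)\|_m\le Cs^{-\frac d2(1-\frac1m)-\frac12-\varepsilon}$ for some $\varepsilon>0$.

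\emph{Case (2).} Let $\nabla K\in L^d$. Here there is no power gain: Young gives exactly the critical rate $\|\nabla K*u\|_\infty\lesssim s^{-1/2}$. We approximate $\nabla K=(\nabla K-\Psi)+\Psi$ with $\Psi\in C_c^\infty$ and $\|\nabla K-\Psi\|_d\le\delta$, as in the proof of Theorem \ref{thm:regular:decay}. The $\Psi$ part behaves like case (1) with $r=1$ (since $\Psi\in L^1$) and gains $s^{-\frac d2+\frac12}$, which is strictly better when $d\ge2$. The remainder obeys the critical bound with constant $C\delta$, via $\|(\nabla K-\Psi)*u\|_{\sigma}\le\delta\|u\|_\rho$ with $\frac1\sigma=\frac1\rho-\frac1d$.

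With these bounds we estimate the two time pieces. For $s\in(t/2,t)$, take $m<p$ close to $p$ (or $m=p$ when $p<\infty$) so that the kernel singularity $(t-s)^{-\frac12-\frac d2(\frac1m-\frac1p)}$ is integrable. The integral is then bounded by $Ct^{-\frac d2(1-\frac1p)}\cdot t^{-\varepsilon}$ in the gain case, and by $C\delta t^{-\frac d2(1-\frac1p)}$ for the critical remainder. For $s\in(0,t/2)$, take $m=1$: the kernel factor is $\lesssim t^{-\frac12-\frac d2(1-\frac1p)}$, and $\int_0^{t/2}\|N(s)\|_1ds$ must be shown to be $o(t^{1/2})$.

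We expect the main obstacle to be this small-$s$ piece, since the bounds above blow up as $s\to0$ rather than being integrable there. To handle it we would further split off $\int_0^1$, using the local bound $\|N(s)\|_1\le C$ on $[0,1]$. That bound follows from $u\in L^\infty(0,1;L^1\cap L^\infty)$, which is available because $u_0\in L^1\cap L^\infty$ and by Proposition \ref{prop:local:exist}. On $[1,t/2]$ the estimate $\|N(s)\|_1\lesssim s^{-\frac12-\varepsilon}+\delta s^{-1/2}$ integrates to $O(t^{\frac12-\varepsilon})+C\delta t^{1/2}$. Multiplying by the scaling factor, the limsup of the quantity in \eqref{eq:late-nonlinear-term} is therefore $\le C\delta$, and letting $\delta\to0$ concludes. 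For $p=\infty$ the same scheme works with $m$ large and finite, since \eqref{lem:heat-decay} covers all finite $q$.
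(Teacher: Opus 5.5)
Your overall architecture is the paper's: you split the Duhamel integral at $t/2$, bound $\|u(\nabla K*u)\|_1$ on $[0,t/2]$ after peeling off $[0,1]$ with a local bound, and bound the $L^p$ norm (or a large finite $L^m$ norm when $p=\infty$) on $[t/2,t]$. You gain a power of $s$ in Case (1) and use a $C_c^\infty$ approximation with an $L^d$-small remainder in Case (2). However, your exponent bookkeeping in the Young-type inequalities is wrong: you consistently use the relations for a kernel in the conjugate space $L^{r'}$ instead of $L^{r}$. In Case (1), take $\frac1m=\frac1a+\frac1\sigma$ and $1+\frac1\sigma=\frac1r+\frac1\rho$. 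Then weak Young and the decay hypothesis give
\[
\|N(s)\|_m\le C\|u(s)\|_a\|u(s)\|_\rho\le Cs^{-\frac d2\left(2-\frac1a-\frac1\rho\right)}=Cs^{-\frac d2\left(1-\frac1m\right)-\frac{d}{2r}}.
\]
So the extra decay is $\frac{d}{2r}$, not $\frac d2(1-\frac1r)$. Two checks confirm this. Your formula with $r=1$ gives no gain, which contradicts your own correct claim that an $L^1$ piece gains $\frac d2$. And for $r>2$ your bound asserts more decay than the inequalities provide. Since $\frac{d}{2r}>\frac12$ exactly when $r<d$, which is the hypothesis, the gain holds for every $r\in(1,d)$.

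Consequently the sub-case $r\le d/(d-1)$ is spurious. This matters, because your treatment of it fails for a general $\nabla K\in L^{r,\infty}$. The spatial tail $\nabla K\chi_{\{|x|>1\}}$ need not be bounded, nor lie in $L^{r',\infty}$ for any $r'>r$, since a weak-$L^r$ function may be singular away from the origin. The splitting in Remark~\ref{rem:L1Linfty} that works in general is by level sets, $f\chi_{\{|f|>\lambda\}}+f\chi_{\{|f|\le\lambda\}}$, not by $|x|$. The same conjugate slip appears in Case (2). Young's inequality with an $L^d$ kernel requires $\frac1\sigma=\frac1\rho-\frac{d-1}{d}$, not $\frac1\rho-\frac1d$. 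With the correct relation you do get the critical bound $C\delta\, s^{-\frac d2(1-\frac1m)-\frac12}$ that you state.

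Once these exponents are corrected, the rest of your argument goes through unchanged and coincides with the paper's proof. In Case (1) the paper simply takes $b>r$, $\frac1a+\frac1b=1$ and $1+\frac1b=\frac1r+\frac1m$, which gives $\|u(s)(\nabla K*u(s))\|_1\le Cs^{-\frac{d}{2r}}$ directly, with no further splitting. One small point: if the gain satisfies $\varepsilon\ge\frac12$, then $\int_1^{t/2}s^{-\frac12-\varepsilon}\,\mathrm{d}s$ is $O(\log t)$ rather than $O(t^{\frac12-\varepsilon})$. This is still $o(t^{1/2})$, so your conclusion is unaffected.
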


\begin{proof}
Fix $p \in [1,\infty]$ and decompose 
$$N_p(t) := \left\| \int_0^t \nabla \cdot e^{(t-s)\Delta} \bigl( u(s)(\nabla K * u(s)) \bigr)\, \mathrm{d}s \right\|_p \le N_p^{(1)}(t) + N_p^{(2)}(t)$$ over the subintervals $[0, t/2]$ and $[t/2, t]$, respectively.

\smallskip
\noindent\textit{Step 1: Estimate on $[0,t/2]$.}
Using the standard heat semigroup bound $\|\nabla e^{\tau\Delta} f\|_p \le C \tau^{-\frac{1}{2}-\frac{d}{2}(1-\frac{1}{p})} \|f\|_1$ and $t-s \ge t/2$, we obtain
\begin{equation}\label{eq:early-scaled}
t^{\frac{d}{2}(1-\frac{1}{p})} N_p^{(1)}(t) \le C t^{-\frac{1}{2}} \int_0^{t/2} \|u(s)(\nabla K * u(s))\|_1 \, \mathrm{d}s.
\end{equation}

\textit{Case 1: $\nabla K \in L^{r,\infty}(\mathbb{R}^d)$ with $1 < r < d$.}
Choosing $b > r$ and setting $a^{-1} + b^{-1} = 1$, $1 + b^{-1} = r^{-1} + m^{-1}$, 
the H\"older and the weak Young inequalities along with decay bound \eqref{lem:heat-decay} yield
\[
\|u(s)(\nabla K * u(s))\|_1 \le C \|\nabla K\|_{L^{r,\infty}} \|u(s)\|_a \|u(s)\|_m \le C s^{-\frac{d}{2r}} \qquad \text{for } s \ge 1.
\]
Since $C_0 := \int_0^1 \|u(s)(\nabla K * u(s))\|_1 \, \mathrm{d}s < \infty$ and $\beta := \frac{d}{2r} > \frac{1}{2}$, we have $t^{-1/2} \int_1^{t/2} s^{-\beta}\, \mathrm{d}s \to 0$ as $t \to \infty$. Consequently,
\begin{equation}\label{eq:early-weak-limit}
t^{\frac{d}{2}(1-\frac{1}{p})} N_p^{(1)}(t) \to 0.
\end{equation}

\textit{Case 2: $\nabla K \in L^d(\mathbb{R}^d)$.}
Given $\varepsilon > 0$, decompose $\nabla K = k_1 + k_2$ with $k_1 \in C_c^\infty(\mathbb{R}^d) \subset L^1(\mathbb{R}^d)$ and $\|k_2\|_d \le \varepsilon$. For $s \ge 1$, 
the Young and the H\"older inequalities together with \eqref{lem:heat-decay} yield
\[
\|u(s)(k_1 * u(s))\|_1 \le \|k_1\|_1 \|u(s)\|_2^2 \le C_{k_1} s^{-\frac{d}{2}},
\]
and setting $d' = d/(d-1)$,
\[
\|u(s)(k_2 * u(s))\|_1 \le \varepsilon \|u(s)\|_{d'} \|u(s)\|_1 \le C \varepsilon s^{-\frac{1}{2}}.
\]
Inserting these bounds into \eqref{eq:early-scaled} gives
\begin{equation}\label{eq:early-strong-limsup}
\limsup_{t\to\infty} t^{\frac{d}{2}(1-\frac{1}{p})} N_p^{(1)}(t) \le C \varepsilon \limsup_{t\to\infty} t^{-\frac{1}{2}} \int_1^{t/2} s^{-\frac{1}{2}}\, \mathrm{d}s \le C \varepsilon.
\end{equation}

\smallskip
\noindent\textit{Step 2: Estimate on $[t/2,t]$.}
Set $q = p$ if $p < \infty$ and choose any finite $q > d$ if $p = \infty$, so that $\theta := \frac{1}{2} + \frac{d}{2}(\frac{1}{q} - \frac{1}{p}) < 1$. The heat semigroup estimate yields
\begin{equation}\label{eq:late-general}
N_p^{(2)}(t) \le C \int_{t/2}^t (t-s)^{-\theta} \|u(s)(\nabla K * u(s))\|_q\, \mathrm{d}s.
\end{equation}

\textit{Case 1: $\nabla K \in L^{r,\infty}(\mathbb{R}^d)$ with $1 < r < d$.}
Selecting $a,b,m > 1$ with $q^{-1} = a^{-1} + b^{-1}$ and $1 + b^{-1} = r^{-1} + m^{-1}$, \eqref{lem:heat-decay} implies
\[
\|u(s)(\nabla K * u(s))\|_q \le C \|u(s)\|_a \|u(s)\|_m \le C s^{-\frac{d}{2}(1-\frac{1}{q}+\frac{1}{r})}.
\]
Substituting this into \eqref{eq:late-general} and integrating over $s \in [t/2,t]$ gives $N_p^{(2)}(t) \le C t^{-\frac{d}{2}(1-\frac{1}{p})} t^{\frac{1}{2}-\frac{d}{2r}}$. Since $\frac{1}{2} - \frac{d}{2r} < 0$, we conclude
\begin{equation}\label{eq:late-weak-limit}
t^{\frac{d}{2}(1-\frac{1}{p})} N_p^{(2)}(t) \to 0.
\end{equation}
Combining \eqref{eq:early-weak-limit} and \eqref{eq:late-weak-limit} settles the result under Case 1.

\textit{Case 2: $\nabla K \in L^d(\mathbb{R}^d)$.}
Decompose $\nabla K = k_1 + k_2$ as in Step 1. Choosing exponents $a,b,m > 1$ satisfying $q^{-1} = a^{-1} + b^{-1}$ and $1 + b^{-1} = d^{-1} + m^{-1}$, we bound
\[
\|u(s)(k_2 * u(s))\|_q \le \varepsilon \|u(s)\|_a \|u(s)\|_m \le C \varepsilon s^{-\frac{d}{2}(1-\frac{1}{q})-\frac{1}{2}},
\]
which implies $\limsup_{t\to\infty} t^{\frac{d}{2}(1-\frac{1}{p})} N_{p,2}^{(2)}(t) \le C \varepsilon$. For $k_1 \in L^1(\mathbb{R}^d)$,
\[
\|u(s)(k_1 * u(s))\|_q \le \|k_1\|_1 \|u(s)\|_a \|u(s)\|_b \le C_{k_1} s^{-\frac{d}{2}(2-\frac{1}{q})},
\]
yielding $t^{\frac{d}{2}(1-\frac{1}{p})} N_{p,1}^{(2)}(t) \le C_{k_1} t^{-\frac{d-1}{2}} \to 0$. Therefore,
\[
\limsup_{t\to\infty} t^{\frac{d}{2}(1-\frac{1}{p})} N_p(t) \le C \varepsilon.
\]
Since $\varepsilon > 0$ is arbitrary, $t^{\frac{d}{2}(1-\frac{1}{p})} N_p(t) \to 0$, establishing \eqref{eq:late-nonlinear-term} for all $p \in [1,\infty]$.
\end{proof}

\begin{rem}
    In dimension $d=1$, the convergence in \eqref{eq:late-nonlinear-term} can be established for every kernel satisfying $\nabla K \in L^1(\mathbb{R})$ together with the additional condition $\int_\mathbb{R} \nabla K \, \d x = 0$. This result forms the basis for the findings in \cite{KS10-spikes} concerning the large-time asymptotics (given by the heat kernel \eqref{heat kernel}) of solutions to problem \eqref{eq:main}--\eqref{ini:L1Linf}, considered in one space dimension.
\end{rem}

\begin{proof}[Proof of Theorems \ref{thm:regular:asymptotics:intro} and \ref{thm:heat:kernel:intro}.]
We systematically apply the Duhamel formula \eqref{duhamel}. Given relation \eqref{heat:kernel:asymptotics} and the conservation of mass ($M = \text{const}$), the linear term $e^{{t}\Delta}u_0$ behaves asymptotically like $M G(t)$. Since Theorem \ref{thm:regular:decay} provides decay estimates \eqref{lem:heat-decay}, Lemma~\ref{lem:nonlinear-term:heat} ensures that the nonlinear term decays faster. This completes the proof of Theorem \ref{thm:regular:asymptotics:intro}. The proof of Theorem \ref{thm:heat:kernel:intro} is analogous and relies on the decay estimates established in Theorem \ref{thm:decay:repulsive}.
\end{proof}

\section{Self-similar solutions and self-similar asymptotics}

Next, we consider the Cauchy problem \eqref{eq:main}--\eqref{ini:L1Linf} in $\R^d$ for $d \ge 2$ and with the critical logarithmic interaction kernel
\begin{equation}\label{K=logx}
    K = -\log |x| \quad \text{satisfying} \quad \nabla K(x) = -\frac{x}{|x|^2} \in L^{d,\infty}(\mathbb{R}^d) \setminus L^d(\mathbb{R}^d).
\end{equation}

\begin{rem}\label{rem:scaling}
The first equation in \eqref{eq:main} is invariant under transformation 
\begin{align}\label{def:rescaled-solution}
u_\lambda(x,t):=\lambda^d u(\lambda x,\lambda^2 t),
\qquad x\in\R^d,\ \ t>0
\end{align}
for every $\lambda > 0$. Indeed,
note first that
\begin{align*}
\partial_t u_\lambda(x,t)
&= \lambda^{d+2} (\partial_t u)(\lambda x,\lambda^2 t), \quad
\Delta u_\lambda(x,t)
= \lambda^{d+2} (\Delta u)(\lambda x,\lambda^2 t).
\end{align*}
Moreover, since
\(
\nabla K(x) = -{x}/{|x|^2}
\)
is homogeneous of degree $-1$, we have
\begin{align*}
(\nabla K * u_\lambda)(x,t)
&= \lambda (\nabla K * u)(\lambda x,\lambda^2 t),
\end{align*}
which yields
\begin{align*}
\nabla \cdot \bigl( u_\lambda (\nabla K * u_\lambda) \bigr)(x,t)
&= \lambda^{d+2} \Bigl( \nabla \cdot \bigl( u (\nabla K * u) \bigr) \Bigr) (\lambda x,\lambda^2 t).
\end{align*}
Consequently, $u_\lambda$ satisfies the original equation \eqref{eq:main}.
\end{rem}





\subsection{Preliminary estimates}
We begin our analysis of the asymptotic properties of solutions by establishing their optimal estimates.

\begin{prop} \label{prop:critical:optimal:decay}
Let $u = u(x,t)$ be a non-negative solution to the Cauchy problem \eqref{eq:main}-\eqref{ini:L1Linf}
with the repulsive logarithmic kernel \eqref{K=logx}.
 Then, for every $p \in [1, \infty]$, there exist constants $C_1 > 0$ and $C_2>0$, independent of $t$, such that
\begin{align}\label{eq:Lp-decay}
\|u(t)\|_{p} \le C_1 t^{-\frac{d}{2}\left(1-\frac{1}{p}\right)}
\end{align}
and
\begin{align}\label{eq:gradient:Lp-decay}
\|\nabla u(t)\|_{p} \le C_2 t^{-\frac{d}{2}\left(1-\frac{1}{p}\right)-\frac{1}{2}} 
\end{align}
for all $t>0$. Moreover, if $\int_{\R^d} |x|^2 u_0 (x)  \, \mathrm{d}x < \infty$, then for $M=\int_{\R^d} u_0(x) \, \mathrm{d}x$,
\begin{equation}\label{moment:evolution}
   \int_{\R^d} |x|^2 u (x,t)  \, \mathrm{d}x=\int_{\R^d} |x|^2 u_0 (x)  \, \mathrm{d}x  + (2dM + M^2)t
\end{equation}
for all $t\geq 0$.
\end{prop}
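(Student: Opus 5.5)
The plan is to treat the three claims in turn, relying mainly on Theorem~\ref{thm:decay:repulsive} and the Duhamel formula \eqref{duhamel}.

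\emph{Step 1: $L^p$ decay for finite $p$.} For $K=-\log|x|$ we have $\Delta K=-(d-2)|x|^{-2}\le 0$ when $d\ge3$ and $\Delta K=-2\pi\delta_0\le0$ when $d=2$. Hence $K$ is repulsive in the sense of Definition~\ref{def:repulsive kernel}. Moreover $\nabla K\in L^{d,\infty}$, so assumption \eqref{nablaK:ass} holds by Remark~\ref{rem:L1Linfty}. I apply Theorem~\ref{thm:decay:repulsive} with $K_1=K$ and $K_2=0$, so the smallness condition is trivially satisfied. This gives \eqref{eq:Lp-decay} for every $q\in[1,\infty)$.

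\emph{Step 2: the endpoint $p=\infty$ and the gradient bounds.} For large $t$ I use the Duhamel formula started at time $t/2$:
\[
u(t)=e^{\frac t2\Delta}u(t/2)+\int_{t/2}^t\nabla\cdot e^{(t-s)\Delta}\bigl(u(\nabla K*u)\bigr)(s)\,\d s .
\]
The key ingredient is $\|\nabla K*u(s)\|_m\le C\|u(s)\|_q$ with $1/m=1/q-1/d$, which follows from weak Young. Combined with Step 1, this gives $\|u(\nabla K*u)(s)\|_q\le C s^{-\frac d2(1-\frac1q)-\frac12}$ for suitable finite $q>d$. The factor $(t-s)^{-\theta}$ with $\theta<1$ is integrable on $[t/2,t]$, and its integral contributes $t^{1-\theta}$, so the powers combine to exactly $t^{-\frac d2}$ for the $L^\infty$ norm. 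For $t\le 1$ I use instead the local boundedness coming from $u_0\in L^\infty$ and Proposition~\ref{prop:local:exist}.

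For the gradient I differentiate the same formula. The linear part satisfies $\|\nabla e^{\frac t2\Delta}u(t/2)\|_p\le Ct^{-1/2}\|u(t/2)\|_p$. The Duhamel term contains $\nabla\nabla\cdot e^{(t-s)\Delta}$, whose singularity $(t-s)^{-1}$ is not integrable. I therefore put one derivative on the nonlinearity:
\[
\nabla\cdot\bigl(u\,\nabla K*u\bigr)=\nabla u\cdot(\nabla K*u)+u\,(\Delta K*u).
\]
Here $\Delta K*u$ is a Riesz-potential-type term of order zero, namely $c\,u$ when $d=2$ and $|x|^{-2}*u$ when $d\ge3$, which is bounded via Hardy--Littlewood--Sobolev. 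I then run a bootstrap on $\sup_s s^{\gamma}\|\nabla u(s)\|_q$. I expect this to be the main technical obstacle, because the $\nabla u$ term appears on both sides. I would close the bootstrap with a weighted sup norm on a scaled interval, using that the coefficient gains a factor $(t-s)^{1/2-\dots}$ which is small relative to $t$. Alternatively, and perhaps more cleanly, I would exploit scale invariance: rescaling $u_\lambda$ with $\lambda=\sqrt t$ reduces the estimate at time $t$ to a uniform estimate at time $1$ for the rescaled solutions. Their $L^1\cap L^\infty$ bounds on $[1/2,1]$ are uniform by Step 1, so local parabolic regularity as in Proposition~\ref{prop:properties} yields a uniform bound on $\|\nabla u_\lambda(1)\|_p$. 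Scaling back gives \eqref{eq:gradient:Lp-decay}. I would take this scaling route as the primary argument.

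\emph{Step 3: moment identity.} I justify the differentiation of $\mu_2$ by Proposition~\ref{prop:bound for moment}, and then compute as in Lemma~\ref{lem:moment}:
\[
\frac{d}{dt}\mu_2=2dM-\iint (x-y)\cdot\nabla K(x-y)\,u(x)u(y)\,\d x\,\d y .
\]
Since $z\cdot\nabla K(z)=-1$ identically, the double integral equals $-M^2$, so $\mu_2'=2dM+M^2$ exactly. Integrating in time gives \eqref{moment:evolution}, and in particular the second moment stays finite for all times, so global existence is consistent.
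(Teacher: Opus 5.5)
Your proposal is correct and follows the paper's proof essentially step by step. The paper uses Theorem~\ref{thm:decay:repulsive} with $K_1=K$, $K_2=0$ for finite $p$, the Duhamel formula from $t/2$ for $p=\infty$, and the scaling $u_\lambda$ to reduce the gradient bound to a $\lambda$-uniform local estimate. It obtains that local estimate exactly through your splitting $\nabla u\cdot(\nabla K*u)+u\,(\Delta K*u)$, the bound \eqref{ineq:kernel-estimate_2} on $\|\nabla K*u_\lambda\|_\infty$, and a singular Gr\"onwall inequality, rather than by quoting the qualitative Proposition~\ref{prop:properties}; it then uses $z\cdot\nabla K(z)=-1$ for the moment identity, as you do. One slip: since $\nabla K=-x/|x|^2\in L^{d,\infty}$, the weak Young relation is $\frac1m=\frac1q-\frac{d-1}{d}$ as in Lemma~\ref{kernel-estimate}, not $\frac1m=\frac1q-\frac1d$ (the two coincide only for $d=2$), although the decay rate $s^{-\frac d2(1-\frac1q)-\frac12}$ you state is the correct one.
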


We first recall two convolution estimates used in the proof.

\begin{lemma}\label{kernel-estimate}
Let $d \ge 2$ and \(
\nabla K(x) = -{x}/{|x|^2}.
\)
\begin{enumerate}
    \item \cite[Theorem 4.5.3]{H15}
For all $1 < q < p < \infty$ such that  $\frac{1}{p}=\frac{1}{q}-\frac{d-1}{d}$, 
there exists a constant $C=C(d,p,q)>0$ such that
\begin{equation} \label{ineq:kernel-estimate_1}
\|\nabla K * f\|_{p} \leq C\|f\|_{q}
\end{equation}
for all $f \in L^q(\R^d)$.

\item \cite[Lemma 4.5.4]{H15}
There exists a constant $C=C(d)>0$ such that
\begin{equation}\label{ineq:kernel-estimate_2}
\|\nabla K * f\|_{\infty} \leq C\|f\|_{1}^\frac{d-1}{d} \|f\|_{\infty}^\frac{1}{d} 
\end{equation}
for all $f \in L^1(\R^d) \cap L^\infty(\R^d)$.
\end{enumerate}
\end{lemma}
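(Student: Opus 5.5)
Both estimates follow from the pointwise domination $|\nabla K(x)| = |x|^{-1}$. For every $f$ and a.e. $x$,
\[
|\nabla K * f(x)| \le \int_{\R^d} \frac{|f(y)|}{|x-y|}\, \mathrm{d}y = c_d^{-1}\, I_{d-1}|f|(x),
\]
where $I_\alpha g = c_d \int |x-y|^{\alpha-d} g(y)\,\mathrm{d}y$ is the Riesz potential of order $\alpha = d-1$. Here $0<\alpha<d$ because $d\ge 2$. Since the bounds only involve $|f|$, it suffices to treat $f \ge 0$.

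For part (1), I would apply the Hardy--Littlewood--Sobolev inequality to $I_{d-1}|f|$. The theorem gives $\|I_\alpha g\|_p \le C\|g\|_q$ for $1<q<p<\infty$ with $\frac1p = \frac1q - \frac{\alpha}{d}$. With $\alpha = d-1$, this is exactly the stated relation $\frac1p=\frac1q-\frac{d-1}{d}$, and it is admissible precisely when $1<q<\frac{d}{d-1}$. The one-line derivation is the standard one in \cite{H15}: $|x|^{-1}\in L^{d/(d-1),\infty}(\R^d)$, so the weak Young inequality applies with $1+\frac1p=\frac1q+\frac{d-1}{d}$, and the Marcinkiewicz interpolation step built into that inequality upgrades the output to strong type since $q>1$. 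Combined with the pointwise bound above, this yields \eqref{ineq:kernel-estimate_1}.

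For part (2), I would fix $x$ and a radius $R>0$ and split the integral into the regions $|x-y|<R$ and $|x-y|\ge R$. On the ball, using $\|f\|_\infty$,
\[
\int_{|x-y|<R}\frac{|f(y)|}{|x-y|}\,\mathrm{d}y \le \|f\|_\infty \,|S^{d-1}|\int_0^R \rho^{d-2}\,\mathrm{d}\rho = \frac{|S^{d-1}|}{d-1}\,\|f\|_\infty R^{d-1},
\]
which is finite since $d\ge2$. Outside the ball, $|x-y|^{-1}\le R^{-1}$, so that part is at most $R^{-1}\|f\|_1$. Choosing $R=(\|f\|_1/\|f\|_\infty)^{1/d}$ balances the two terms, and both become comparable to $\|f\|_1^{(d-1)/d}\|f\|_\infty^{1/d}$. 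Taking the supremum over $x$ gives \eqref{ineq:kernel-estimate_2} with $C = 1+|S^{d-1}|/(d-1)$.

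Neither step is a real obstacle, since both results are classical and cited from \cite{H15}. The only points to check are bookkeeping: the admissible range of exponents in (1), and that the bound in (2) is uniform in $x$.
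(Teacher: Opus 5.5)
Your argument is correct and matches the route behind the results the paper cites from \cite{H15}: you dominate $|\nabla K*f|$ by the Riesz potential $I_{d-1}|f|$ and apply Hardy--Littlewood--Sobolev for (1), and you split into near and far regions at the radius $R=(\|f\|_1/\|f\|_\infty)^{1/d}$ for (2). There is one slip in your parenthetical justification of (1). The kernel $|x|^{-1}$ lies in $L^{d,\infty}(\R^d)$, as the paper itself notes for $\nabla K$, and not in $L^{\frac{d}{d-1},\infty}(\R^d)$. The weak Young inequality is therefore applied with $1+\frac1p=\frac1q+\frac1d$, which is exactly $\frac1p=\frac1q-\frac{d-1}{d}$. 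The relation you wrote, $1+\frac1p=\frac1q+\frac{d-1}{d}$, would give $\frac1p=\frac1q-\frac1d$, which contradicts your own HLS computation when $d\ge3$. Your main step, HLS with $\alpha=d-1$, is stated correctly, so the proof stands once this remark is fixed.
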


\begin{proof}[Proof of Proposition \ref{prop:critical:optimal:decay}]
{\it Step 1.~$L^p$-decay of $u$.}~We first note that the kernel $K(x) = -\log |x|$ is repulsive in the sense of Theorem~\ref{thm:decay:repulsive}. Indeed, for $d \ge 3$, a direct computation gives 
$\Delta K(x) = -(d-2)/{|x|^2} \le 0$, whereas for $d = 2$, the logarithmic kernel satisfies $\Delta(-\log |x|) = -2\pi \delta_0 \le 0$ in the sense of distributions. Thus, the decay estimates \eqref{eq:Lp-decay} for $p \in [1, \infty)$ follow immediately from Theorem~\ref{thm:decay:repulsive} applied with $K_1(x) = -\log |x|$ and $K_2 \equiv 0$.

We obtain this estimate in the case $p=\infty$ by using the following direct consequence of the Duhamel principle \eqref{duhamel}: 
\begin{equation}\label{duhamel:t/2}
 u(t) = e^{\frac{t}{2}\Delta}u(t/2) + \int_{t/2}^t \nabla \cdot e^{(t-s)\Delta} (u(s) \nabla K * u(s))  \, \mathrm{d}s.
 \end{equation}
Fix a finite $p>d$ and choose $m$ by
\[
 \frac1m=\frac{d-1}{d}+\frac1{2p}.
\]
Then $1<m<d/(d-1)$ and Lemma~\ref{kernel-estimate} (1), followed by
the H\"older inequality, give
\[
 \|u(\nabla K*u)\|_p
 \le \|u\|_{2p}\|\nabla K*u\|_{2p}
 \le C\|u\|_{2p}\|u\|_m.
\]
The finite-exponent decay already proved implies
\[
 \|u(s)(\nabla K*u(s))\|_p
 \le C s^{-\frac d2(1-\frac1p)-\frac12}.
\]
Consequently, \eqref{duhamel:t/2} yields
\begin{align*}
 \|u(t)\|_\infty
 &\le CMt^{-d/2}
   +C\int_{t/2}^t(t-s)^{-\frac12-\frac d{2p}}
       s^{-\frac d2(1-\frac1p)-\frac12}\,\d s\\
 &\le C t^{-d/2}.
\end{align*}
The time integral is finite because $p>d$.

\medskip
\noindent
\textit{Step 2: Decay of the gradient.}~First, note that by the definition of $u_\lambda$ in \eqref{def:rescaled-solution} and the change of variables $y = \lambda x$, we have for every $p \in [1,\infty]$
\[
\|u_\lambda(t)\|_p = \lambda^{d\left(1-\frac{1}{p}\right)} \|u(\lambda^2 t)\|_p.
\]
Using the decay estimate \eqref{eq:Lp-decay} at time $\lambda^2 t$, we obtain
\begin{equation}\label{u:decay:rescal}
\|u_\lambda(t)\|_p \le C \lambda^{d\left(1-\frac{1}{p}\right)} (\lambda^2 t)^{-\frac{d}{2}\left(1-\frac{1}{p}\right)} = C t^{-\frac{d}{2}\left(1-\frac{1}{p}\right)},
\end{equation}
where the constant $C$ is independent of $t$ and $\lambda$.

We now use the scaling \eqref{def:rescaled-solution} together with estimate \eqref{u:decay:rescal} to prove the gradient decay estimate \eqref{eq:gradient:Lp-decay}. Set
\[
V_\lambda = \nabla K * u_\lambda.
\]
For $0 < t \le 1$, the mild formulation of problem \eqref{eq:main} for the rescaled function $u_\lambda$ starting at time $1$ can be written as
\begin{align*}
u_\lambda(t+1) &= e^{t\Delta} u_\lambda(1) + \int_0^t e^{(t-s)\Delta} \bigl( \nabla u_\lambda \cdot V_\lambda + u_\lambda \nabla \cdot V_\lambda \bigr)(s+1)\, \mathrm{d}s.
\end{align*}
Therefore, for every $p \in [1,\infty]$,
\begin{align}
\|\nabla u_\lambda(t+1)\|_p &\le C t^{-1/2} \|u_\lambda(1)\|_p \notag \\
&\quad + C \int_0^t (t-s)^{-1/2} \|V_\lambda(s+1)\|_\infty \|\nabla u_\lambda(s+1)\|_p \, \mathrm{d}s \notag \\
&\quad + C \int_0^t (t-s)^{-1/2} \|u_\lambda(s+1) \nabla \cdot V_\lambda(s+1)\|_p \, \mathrm{d}s. \label{eq:gradient-local-corrected}
\end{align}
By inequality \eqref{ineq:kernel-estimate_2},
\[
\|V_\lambda(\tau)\|_\infty \le C_d \|u_\lambda(\tau)\|_1^{1-\frac{1}{d}} \|u_\lambda(\tau)\|_\infty^{\frac{1}{d}}.
\]
Thus, estimate \eqref{u:decay:rescal} yields
\begin{equation}\label{eq:uniform-V}
\sup_{\lambda>0} \sup_{1 \le \tau \le 2} \|V_\lambda(\tau)\|_\infty \le C.
\end{equation}

We next estimate $\nabla \cdot V_\lambda$. If $d = 2$, then $\nabla \cdot V_\lambda = -2\pi u_\lambda$, and hence
\[
\|u_\lambda \nabla \cdot V_\lambda\|_p \le 2\pi \|u_\lambda\|_\infty \|u_\lambda\|_p.
\]
If $d \ge 3$, then $\nabla \cdot V_\lambda = -(d-2) |x|^{-2} * u_\lambda$. An argument analogous to the one leading to inequality \eqref{ineq:kernel-estimate_2} gives
\[
\bigl\| |x|^{-2} * f \bigr\|_\infty \le C_d \|f\|_1^{1-\frac{2}{d}} \|f\|_{\infty}^{\frac{2}{d}}.
\]
Therefore,
\[
\|u_\lambda \nabla \cdot V_\lambda\|_p \le C_d \|u_\lambda\|_p \|u_\lambda\|_1^{1-\frac{2}{d}} \|u_\lambda\|_\infty^{\frac{2}{d}}.
\]
In both cases, the $\lambda$-independent estimates \eqref{u:decay:rescal} yield
\begin{equation}\label{eq:uniform-div-V}
\sup_{\lambda>0} \sup_{1 \le \tau \le 2} \|u_\lambda(\tau) \nabla \cdot V_\lambda(\tau)\|_p \le C_p.
\end{equation}

Now, define $F_\lambda(t) = \|\nabla u_\lambda(t+1)\|_p$. Combining inequalities \eqref{eq:gradient-local-corrected}--\eqref{eq:uniform-div-V}, we obtain
\[
F_\lambda(t) \le C_p t^{-\frac{1}{2}} + C \int_0^t (t-s)^{-\frac{1}{2}} F_\lambda(s)\, \mathrm{d}s, \qquad 0 < t \le 1.
\]
The singular Gr\"onwall inequality then implies
\[
F_\lambda(t) \le C_p t^{-\frac{1}{2}}, \qquad 0 < t \le 1,
\]
where $C_p$ is independent of $\lambda$. In particular,
\begin{equation}\label{eq:gradient-rescaled-time-two}
\|\nabla u_\lambda(2)\|_p \le C_p
\end{equation}
for every $\lambda > 0$.

By the definition of $u_\lambda$ and a change of variables, we have
\[
\|\nabla u_\lambda(2)\|_p = \lambda^{d+1-\frac{d}{p}} \|\nabla u(2\lambda^2)\|_p.
\]
Given $T > 0$, setting $\lambda = (T/2)^{1/2}$ in estimate \eqref{eq:gradient-rescaled-time-two} gives
\[
\|\nabla u(T)\|_p \le C_p T^{-\frac{1}{2}\left(d+1-\frac{d}{p}\right)} = C_p T^{-\frac{d}{2}\left(1-\frac{1}{p}\right)-\frac{1}{2}}.
\]
This completes the proof of the gradient decay estimate \eqref{eq:gradient:Lp-decay} for every $p \in [1,\infty]$.

\medskip
\noindent
\textit{Step 3: Evolution of the second moment.}~We denote the second moment of the solution by 
\begin{align*}
\mu_2 (t) := \int_{\R^d}|x|^2 u(x,t)  \, \mathrm{d}x.
\end{align*}
Since $\nabla K = -{x}/{|x|^2}$ for $x \in \R^d$, we obtain the equation
\begin{align*}
\dfrac{d}{dt}\mu_2(t) &= 2dM - 2\int_{\R^d}u(x,t)x\cdot \nabla K * u(x,t)  \, \mathrm{d}x\\
&= 2dM + \int_{\R^d}\int_{\R^d} u(x,t)(x-y) \cdot \dfrac{x-y}{|x-y|^2}u(y,t)  \, \mathrm{d}y  \, \mathrm{d}x\\
&= 2dM + M^2,
\end{align*}
which is equivalent to equation \eqref{moment:evolution}.
\end{proof}

\subsection{Self-similar variables}

Let ${u}=u(x,t)$ be a solution of problem \eqref{eq:main}--\eqref{ini:L1Linf} with  mass $M> 0$ defined in \eqref{mass:intro}.
We introduce the self-similar variables
\begin{equation}\label{eq:ss-change}
\xi=\frac{x}{\sqrt t},\qquad s=\log t,
\end{equation}
and define the new function
\begin{equation}\label{eq:def-W}
W(\xi,s):=e^{\frac d2 s}{u}\left(e^{\frac{s}{2}}\xi,e^s\right)
=t^{\frac{d}{2}}{u}\left(t^{\frac{1}{2}}\xi ,t\right)
\end{equation}
or, equivalently,
\begin{equation}\label{eq:inverse-W}
{u}(x,t)=t^{-\frac{d}{2}}W\!\left(\frac{x}{\sqrt t},\log t\right).
\end{equation}
Note that, by the change of variables $x=e^{s/2}\xi$,
\begin{equation}\label{mass-for-W}
\int_{\mathbb R^d}W(\xi,s) \, \mathrm{d}\xi
=
\int_{\mathbb R^d}{u}(x,e^s) \, \mathrm{d}x
=
M
\quad \mathrm{for\ all}\ s\in \R.
\end{equation}
Moreover, since $\nabla K$ is homogeneous of degree $-1$, we have (see also Remark \ref{rem:scaling})
\begin{equation}\label{eq:kernel-scaling}
(\nabla K*{u})(e^{\frac{s}{2}}\xi,e^s)
=
e^{-\frac{s}{2}}(\nabla K*W(\cdot,s))(\xi).
\end{equation}
Therefore, a direct computation shows that \(W\) satisfies the equation
\begin{equation}\label{eq:FP}
\partial_s W
=
\Delta W+\frac12\nabla\cdot(\xi W)
+\nabla\cdot\bigl(W(\nabla K*W)\bigr)
\quad\text{for } \xi\in\mathbb R^d, \; s\in\mathbb R.
\end{equation}

\begin{lemma}\label{lem:estimates-W}
Let \(W\) be defined by \eqref{eq:def-W}, where \(u=u(x,t)\) is a nonnegative
solution of \eqref{eq:main}--\eqref{ini:L1Linf} with the logarithmic
kernel \eqref{K=logx}. Then, for every \(p\in[1,\infty]\),
\begin{equation}\label{eq1:estimate for W}
    \sup_{s\in\mathbb R}\|W(s)\|_{p}
    <\infty,
    \qquad
    \sup_{s\in\mathbb R}\|\nabla W(s)\|_{p}
    <\infty.
\end{equation}
Moreover, if
\(
    \int_{\mathbb R^d}|x|^2u_0(x)\,dx<\infty,
\)
then, for every  fixed \(s_0\in\mathbb R\),
\begin{equation}\label{eq2:estimate for W}
    \sup_{s\geq s_0}
    \int_{\mathbb R^d}|\xi|^2W(\xi,s)\, \mathrm{d}\xi
    \leq  e^{-s_0}
    \int_{\mathbb R^d}|x|^2u_0(x)\, \mathrm{d}x
    +2dM+M^2
\end{equation}
\end{lemma}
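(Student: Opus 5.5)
The plan is to transfer the estimates of Proposition~\ref{prop:critical:optimal:decay} to $W$ through the explicit change of variables \eqref{eq:def-W}. For fixed $s$, put $t=e^s$ and substitute $x=t^{1/2}\xi$, $\d x = t^{d/2}\,\d\xi$. This gives
\[
\|W(s)\|_p = t^{\frac d2}\Bigl(\int_{\R^d} |u(t^{1/2}\xi,t)|^p\,\d\xi\Bigr)^{1/p} = t^{\frac d2 - \frac{d}{2p}}\|u(t)\|_p = t^{\frac d2(1-\frac1p)}\|u(t)\|_p ,
\]
with the obvious modification for $p=\infty$. Estimate \eqref{eq:Lp-decay} then bounds this by $C_1$, uniformly in $s\in\R$.

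The gradient is handled the same way. Since $\nabla_\xi W(\xi,s) = t^{\frac{d+1}{2}}(\nabla u)(t^{1/2}\xi,t)$, we get
\[
\|\nabla W(s)\|_p = t^{\frac d2(1-\frac1p)+\frac12}\|\nabla u(t)\|_p \le C_2 ,
\]
by \eqref{eq:gradient:Lp-decay}. This proves \eqref{eq1:estimate for W}. The constants in \eqref{eq:Lp-decay} and \eqref{eq:gradient:Lp-decay} hold for all $t>0$, including $t$ near $0$, so the suprema over all $s\in\R$ (not just $s$ large) are finite.

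For the moment bound, the same substitution gives
\[
\int_{\R^d}|\xi|^2 W(\xi,s)\,\d\xi = t^{\frac d2}\int_{\R^d} |\xi|^2 u(t^{1/2}\xi,t)\,\d\xi = t^{-1}\int_{\R^d}|x|^2 u(x,t)\,\d x .
\]
Applying the exact identity \eqref{moment:evolution}, this equals
\[
e^{-s}\mu_2(0) + 2dM + M^2 .
\]
This is nonincreasing in $s$, so for $s\ge s_0$ it is at most $e^{-s_0}\mu_2(0)+2dM+M^2$, which is \eqref{eq2:estimate for W}.

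No step is a real obstacle: everything follows from scaling identities applied to results already proved. The only points to check are the exponent bookkeeping (the factor $t^{d/2}$ combined with the Jacobian $t^{-d/2}$ from $\d\xi = t^{-d/2}\d x$) and that the moment identity \eqref{moment:evolution} is justified for all $t\ge0$, which is part of Proposition~\ref{prop:critical:optimal:decay}.
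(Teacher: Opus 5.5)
Your proposal is correct and matches the paper's proof: the same change of variables $x=e^{s/2}\xi$ transfers \eqref{eq:Lp-decay} and \eqref{eq:gradient:Lp-decay} to $W$. The exact moment identity \eqref{moment:evolution} then gives $\int|\xi|^2W(\xi,s)\,\d\xi=e^{-s}\mu_2(0)+2dM+M^2$, which is nonincreasing in $s$. Your observation that the suprema over all $s\in\R$ need the bounds of Proposition~\ref{prop:critical:optimal:decay} for every $t>0$ (not only large $t$) is right, and the proposition states them in that form.
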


\begin{proof}
By definition \eqref{eq:def-W}, the change of variables
\(x=e^{s/2}\xi\) gives
\begin{align*}
    \|W(s)\|_p
    &=
    e^{\frac d2(1-\frac1p)s}
    \|u(e^s)\|_p.
\end{align*}
Using the decay estimate \eqref{eq:Lp-decay}, we obtain
\[
    \|W(s)\|_p
    \leq
    C
    e^{\frac d2(1-\frac1p)s}
    e^{-\frac d2(1-\frac1p)s}
    =C.
\]

Next,
\[
    \nabla_\xi W(\xi,s)
    =
    e^{\frac{d+1}{2}s}
    \nabla u\left(e^{\frac{s}{2}}\xi,e^s\right).
\]
Therefore, for \(1\leq p\le \infty\),
\[
    \|\nabla W(s)\|_p
    =
    e^{\left[
        \frac d2(1-\frac1p)+\frac12
    \right]s}
    \|\nabla u(e^s)\|_p.
\]
Thus, by gradient decay estimate \eqref{eq:gradient:Lp-decay}, we have
\(
    \|\nabla W(s)\|_p
    \leq C.
\)

It remains to estimate the second moment. Another change of variables
gives
\begin{align*}
    \int_{\mathbb R^d}|\xi|^2W(\xi,s)\,\mathrm{d}\xi
    &=
    e^{-s}
    \int_{\mathbb R^d}|x|^2u(x,e^s)\,\mathrm{d}x.
\end{align*}
Thus, by the second-moment identity \eqref{moment:evolution},
\begin{align*}
    \int_{\mathbb R^d}|\xi|^2W(\xi,s)\,\mathrm{d}\xi
    &\leq
    e^{-s_0}
    \int_{\mathbb R^d}|x|^2u_0(x)\,\mathrm{d}x
    +2dM+M^2
\end{align*}
 for every fixed \(s_0\in\mathbb R\) and all $s\geq s_0$.
\end{proof}

\begin{cor}\label{prop:compact for W}
Let \(W\) be defined by \eqref{eq:def-W}, and assume that
\(
\int_{\mathbb R^d}|x|^2u_0(x)\,dx<\infty.
\)
Then, for every \(s_0\in \R\) and every \(p\in[1,\infty)\), the set
\(
\{W(\cdot,s):s\geq s_0\}
\)
is relatively compact in \(L^p(\mathbb R^d)\). Moreover,
if $s_n\ge s_0$ and $W(s_n)\to W_0$ in $L^1(\R^d)$, then
\begin{align*}
    \lim_{n \to \infty} \int_{\R^d} |\xi|^2 |W(\xi,s_n) - W_0(\xi)| \, \mathrm{d}\xi = 0.
\end{align*}
\end{cor}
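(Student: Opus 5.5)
We will prove the corollary with the Fréchet--Kolmogorov--Riesz compactness criterion in $L^p(\mathbb R^d)$, using the uniform bounds of Lemma~\ref{lem:estimates-W}. Fix $s_0\in\mathbb R$ and $p\in[1,\infty)$, and write $\mathcal F=\{W(\cdot,s):s\ge s_0\}$. There are three conditions to check.

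\emph{Boundedness.} We have $\sup_{s\ge s_0}\|W(s)\|_p<\infty$ directly from \eqref{eq1:estimate for W}.

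\emph{Equicontinuity of translations.} For $h\in\mathbb R^d$,
\[
\|W(\cdot+h,s)-W(\cdot,s)\|_p\le |h|\,\|\nabla W(s)\|_p\le C|h|,
\]
uniformly in $s$, again by \eqref{eq1:estimate for W}. This requires $p\ge1$ and the $W^{1,p}$ regularity from Proposition~\ref{prop:properties}.

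\emph{Uniform tightness.} This is the step that uses the second moment. Set $m:=\sup_{s\ge s_0}\int|\xi|^2W\,\d\xi$, which is finite by \eqref{eq2:estimate for W}. Then
\[
\int_{|\xi|>R}W(\xi,s)\,\d\xi\le R^{-2}m .
\]
For $p>1$ we interpolate with the $L^\infty$ bound, again uniformly in $s\ge s_0$:
\[
\int_{|\xi|>R}W^p\,\d\xi\le \|W(s)\|_\infty^{p-1}\int_{|\xi|>R}W\,\d\xi\le C R^{-2}m .
\]
The three conditions together give relative compactness of $\mathcal F$ in $L^p$.

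For the second claim, suppose $W(s_n)\to W_0$ in $L^1$. Passing to an a.e.\ convergent subsequence and applying Fatou's lemma gives $W_0\ge0$ and $\int|\xi|^2W_0\,\d\xi\le m$. We split the weighted integral at radius $R$:
\[
\int|\xi|^2|W(s_n)-W_0|\,\d\xi\le R^2\|W(s_n)-W_0\|_1+\int_{|\xi|>R}|\xi|^2\bigl(W(s_n)+W_0\bigr)\,\d\xi .
\]
The first term tends to $0$ for each fixed $R$. The tail term is the main obstacle, because a bounded second moment alone does not make the tail uniformly small: one would need $|\xi|^2$-uniform integrability. We handle this with a higher moment bound, in two steps.

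First, we derive an evolution inequality for $\int|\xi|^4W\,\d\xi$. Testing \eqref{eq:FP} against $|\xi|^4$, the drift term contributes $-2\int|\xi|^4W$, the Laplacian contributes $C\int|\xi|^2W$, and the interaction term gives
\[
-\iint 4|\xi|^2\xi\cdot\frac{-(\xi-\eta)}{|\xi-\eta|^2}\,W(\xi)W(\eta)\,\d\xi\,\d\eta .
\]
Symmetrizing this integral bounds it by $C M\int|\xi|^2W$. By Gronwall, $\int|\xi|^4W$ stays bounded for $s\ge s_0$, provided the initial fourth moment is finite.

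Second, since the fourth moment of $u_0$ need not be finite, we use the semigroup structure to avoid that hypothesis. We compare with the solution started from a truncated datum, using the $L^1$ stability of the flow. Alternatively, we argue at the level of $u$: the tail mass of $u(t)$ beyond $R\sqrt t$, weighted by $|x|^2/t$, can be shown to vanish uniformly via the exact identity \eqref{moment:evolution} combined with weak convergence.

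The cleanest route we will try first avoids higher moments entirely. The identity \eqref{moment:evolution} gives
\[
\int|\xi|^2W(\xi,s)\,\d\xi=e^{-s}\mu_2(0)+2dM+M^2\longrightarrow 2dM+M^2 .
\]
Hence the second moments of $W(s_n)$ converge. If we also know $\int|\xi|^2W_0\,\d\xi=2dM+M^2$, then a Brezis--Lieb/Scheffé-type argument applied to the nonnegative measures $|\xi|^2W(s_n)\,\d\xi$ and $|\xi|^2W_0\,\d\xi$ yields $L^1$ convergence of $|\xi|^2W(s_n)$ to $|\xi|^2W_0$, which is the claim. Establishing this equality of moments for the limit is the crux: Fatou only gives the inequality $\le$. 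We expect to get equality from the fourth-moment tightness of the first approach, applied to truncated data together with $L^1$-contraction-type stability of the flow.
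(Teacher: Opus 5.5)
Your proof of the first claim is fine. The Fr\'echet--Kolmogorov--Riesz criterion, with translations controlled by the uniform bound on $\|\nabla W(s)\|_p$ and tails by the second moment together with the $L^\infty$ bound, uses the same ingredients as the paper's Rellich--Kondrachov argument on balls plus tail control.

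The second claim, however, is not proved. Everything hinges on the uniform tail bound
\[
\lim_{R\to\infty}\,\sup_{s\ge s_0}\int_{|\xi|>R}|\xi|^2W(\xi,s)\,\mathrm{d}\xi=0
\]
(for your Scheff\'e route, on the closely related statement that no second moment escapes in the limit), and none of your routes establishes it.
\begin{itemize}
\item The fourth-moment estimate needs $\int|x|^4u_0\,\mathrm{d}x<\infty$, which is not assumed.
\item The repair by truncating $u_0$ presupposes a time-uniform $L^1$-stability estimate, let alone a contraction, for this nonlocal flow, and none is available. In the equation for a difference of two solutions, the term $\nabla\cdot\bigl(v\,\nabla K*(u-v)\bigr)$ destroys the Kato-type contraction, and Gronwall-type stability bounds grow with $s$. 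You also do not say how an unweighted $L^1$ bound would transfer to the $|\xi|^2$-weighted tail, which is the quantity at stake.
\item The missing equality $\int|\xi|^2W_0\,\mathrm{d}\xi=2dM+M^2$ cannot be read off from the virial identity either. That requires $W_0$ to be stationary, which the paper proves later using this very corollary.
\end{itemize}

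The missing idea is that your test-function/Gronwall mechanism already works with a weight of only quadratic growth, provided the weight vanishes near the origin. Then its initial value is small by dominated convergence, using only the second moment of $W(s_0)$. Take, for instance, $\psi_R(\xi)=\bigl((|\xi|-R)_+\bigr)^2$. It satisfies
\[
|\nabla\psi_R|=2\psi_R^{1/2},\qquad 0\le\Delta\psi_R\le 2d\,\mathbf 1_{\{|\xi|>R\}},\qquad \xi\cdot\nabla\psi_R\ge 2\psi_R .
\]
Test \eqref{eq:FP} with $\psi_R$ and set $L=\sup_s\|\nabla K*W(s)\|_\infty$, which is finite by Lemma~\ref{kernel-estimate}(2) and Lemma~\ref{lem:estimates-W}. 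The three terms are bounded as follows:
\begin{itemize}
\item the confinement term contributes at most $-\int\psi_RW$;
\item the diffusion term contributes at most $2d\int_{|\xi|>R}W$;
\item the interaction term contributes at most $2L\int\psi_R^{1/2}W\le\frac12\int\psi_RW+2L^2\int_{|\xi|>R}W$.
\end{itemize}
Since $\int_{|\xi|>R}W\le CR^{-2}$, this gives
\[
\frac{\mathrm{d}}{\mathrm{d}s}\int\psi_RW\,\mathrm{d}\xi\le-\frac12\int\psi_RW\,\mathrm{d}\xi+\frac{C}{R^2},
\qquad
\sup_{s\ge s_0}\int\psi_RW(s)\,\mathrm{d}\xi\le\int\psi_RW(s_0)\,\mathrm{d}\xi+\frac{2C}{R^2}\longrightarrow0 .
\]
As $\psi_R\ge|\xi|^2/4$ for $|\xi|\ge2R$, this is the uniform tail bound. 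Your splitting at radius $R$, with Fatou's lemma for $W_0$, then concludes directly, so the Scheff\'e detour becomes unnecessary. This is essentially the paper's argument.
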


\begin{proof}
Fix $p \in [1,\infty)$, and let $\{s_n\} \subset [s_0,\infty)$ be an arbitrary sequence. Set
\[
W_n(\xi) := W(\xi,s_n).
\]
By \eqref{eq1:estimate for W},
\[
\sup_n \|W_n\|_{W^{1,p}(\mathbb{R}^d)} < \infty.
\]
Hence, by the Rellich--Kondrachov compactness theorem on bounded balls and a diagonal argument, there exists a subsequence (still denoted by $\{W_n\}$) and a function $W_\infty \in L^p_{\mathrm{loc}}(\mathbb{R}^d)$ such that
\[
W_n \to W_\infty \qquad \text{in } L^p_{\mathrm{loc}}(\mathbb{R}^d).
\]

It remains to control the tails. By the uniform second-moment bound \eqref{eq2:estimate for W},
\[
\sup_n \int_{|\xi| \ge R} W_n(\xi) \, \mathrm{d}\xi \le \frac{1}{R^2} \sup_n \int_{\mathbb{R}^d} |\xi|^2 W_n(\xi) \, \mathrm{d}\xi \le \frac{C}{R^2}.
\]
Thus,
\begin{equation}\label{eq:L1-tail-W}
\sup_n \int_{|\xi| \ge R} W_n(\xi) \, \mathrm{d}\xi \to 0 \qquad \text{as } R \to \infty.
\end{equation}

For $p = 1$, \eqref{eq:L1-tail-W} together with local $L^1$-convergence immediately yields convergence in $L^1(\mathbb{R}^d)$.

Now assume $1 < p < \infty$. Choose any $q > p$. By \eqref{eq1:estimate for W},
\[
\sup_n \|W_n\|_{q} \le C_q.
\]
Let $\alpha \in (0,1)$ be determined by
\[
\frac{1}{p} = \alpha + \frac{1-\alpha}{q}, \qquad \alpha = \frac{\frac{1}{p} - \frac{1}{q}}{1 - \frac{1}{q}}.
\]
Interpolating on the set $\{|\xi| \ge R\}$ gives
\begin{align*}
\|W_n\|_{L^p(|\xi| \ge R)} &\le \|W_n\|_{L^1(|\xi| \ge R)}^\alpha \|W_n\|_{L^q(|\xi| \ge R)}^{1-\alpha} \\
&\le C \|W_n\|_{L^1(|\xi| \ge R)}^\alpha.
\end{align*}
Therefore,
\[
\sup_n \|W_n\|_{L^p(|\xi| \ge R)} \to 0 \qquad \text{as } R \to \infty.
\]
Consequently, for every $R > 0$,
\begin{align*}
\|W_n - W_m\|_{p} &\le \|W_n - W_m\|_{L^p(B_R)} + \|W_n\|_{L^p(|\xi| \ge R)} + \|W_m\|_{L^p(|\xi| \ge R)}.
\end{align*}
By first choosing $R$ sufficiently large so that the last two terms are uniformly small, and then taking $n, m \to \infty$, we conclude that $\{W_n\}$ is Cauchy in $L^p(\mathbb{R}^d)$ because it converges in $L^p(B_R)$. Hence, $W_n \to W_\infty$ strongly in $L^p(\mathbb{R}^d)$. It remains to prove convergence in the second-moment weighted norm.

Let $(s_n)$ be a sequence such that $\|W_n - W_0\|_1 \to 0$ as $n \to \infty$. Since $W_n$ is a nonnegative function with mass $M$, we have
\begin{align*}
    W_0\ge0,
    \qquad
    \int_{\mathbb R^d}W_0(\xi)\, \mathrm{d}\xi=M.
\end{align*}
We first show the uniform tightness of the second moment for every $s_0 \in \R$:
\begin{align}\label{eq:uniform-second-moment-tail}
    \lim_{R\to\infty}
    \sup_{s\ge s_0}
    \int_{|\xi|>R}|\xi|^2W(\xi,s)\, \mathrm{d}\xi
    =
    0.
\end{align}
Let $\psi_R \in C^1(\R^d)$ be such that
\[
    \psi_R(\xi)=0 \quad\text{for } |\xi|\le R,
    \qquad
    \psi_R(\xi)=(|\xi|-R)^2 \quad\text{for } |\xi|\ge 2R,
\]
and such that
\[
    0\le \psi_R(\xi)\le C|\xi|^2,
    \qquad
    |\nabla\psi_R(\xi)|\le C\psi_R(\xi)^{\frac{1}{2}},
    \qquad
    |\Delta\psi_R(\xi)|\le C\mathbf 1_{\{|\xi|>R\}}.
\]
Testing the rescaled equation \eqref{eq:FP} by $\psi_R$, we obtain
\begin{align}
    \frac{d}{ds}\int_{\mathbb R^d}\psi_R W\, \mathrm{d}\xi
    &=
    \int_{\mathbb R^d}\Delta\psi_R W\, \mathrm{d}\xi
    -
    \frac12\int_{\mathbb R^d}\xi\cdot\nabla\psi_R W\, \mathrm{d}\xi\\
    &\quad
    -
    \int_{\mathbb R^d}
    \nabla\psi_R\cdot(\nabla K*W)W\, \mathrm{d}\xi .
    \label{eq:test-tail-moment}
\end{align}
Since Lemma~\ref{lem:estimates-W} allows us to apply the estimate \eqref{ineq:kernel-estimate_2} so that $\|\nabla K * W\|_\infty < \infty$ for every $s\in \R$, we get from the Young inequality
\begin{align}
    \left|
    \int_{\mathbb R^d}
    \nabla\psi_R\cdot(\nabla K*W)W\, \mathrm{d}\xi
    \right|
    &\le
    C\int_{\mathbb R^d}\psi_R^{\frac{1}{2}}W\, \mathrm{d}\xi
    \nonumber\\
    &\le
    \varepsilon\int_{\mathbb R^d}\psi_R W\, \mathrm{d}\xi
    +
    C(\varepsilon)\int_{|\xi|>R}W\, \mathrm{d}\xi .
    \label{eq:tail-drift-estimate}
\end{align}
Furthermore the test function $\psi_R$ satisfies 
\[
    -\frac12\xi\cdot\nabla\psi_R
    \le
    -c_0\psi_R
\]
for some $c_0>0$. Hence, using also the uniform second moment bound and choosing a suitable $\varepsilon > 0$, we arrive at
\begin{align}
    \frac{d}{ds}\int_{\mathbb R^d}\psi_R W\, \mathrm{d}\xi
    &\le
    -c\int_{\mathbb R^d}\psi_R W\, \mathrm{d}\xi
    +
    C\int_{|\xi|>R}W\, \mathrm{d}\xi
    \nonumber\\
    &\leq
    -c\int_{\mathbb R^d}\psi_R W\, \mathrm{d}\xi
    +
    \frac{C}{R^2}.
    \label{eq:tail-differential-ineq}
\end{align}
The Gr\"onwall inequality gives
\begin{align}
    \int_{\mathbb R^d}\psi_R W(\xi,s)\, \mathrm{d}\xi
    \le
    e^{-c(s-s_0)}
    \int_{\mathbb R^d}\psi_R W(\xi,s_0)\, \mathrm{d}\xi
    +
    \frac{C}{R^2}
\end{align}
for every $s\ge s_0$. Therefore 
using $\int_{\R^d} |\xi|^2W(\xi,s_0)\,\d\xi<\infty$ and dominated convergence theorem,
we conclude that
\begin{align}
    \sup_{s\ge s_0}
    \int_{\mathbb R^d}\psi_R W(\xi,s)\, \mathrm{d}\xi
    \le
    \int_{\mathbb R^d}\psi_R W(\xi,s_0)\, \mathrm{d}\xi
    +
    \frac{C}{R^2}
    \to 0
\end{align}
as $R\to\infty$. Since $\psi_R(\xi)\ge c|\xi|^2$ for $|\xi|\ge 2R$, this proves
\eqref{eq:uniform-second-moment-tail}.

Finally, we prove the convergence in the second moment. For every $R>0$, we have
\begin{align}
    \int_{\mathbb R^d}|\xi|^2|W_n-W_0|\, \mathrm{d}\xi
    &\leq
    R^2\|W_n-W_0\|_{1}
    +
    \int_{|\xi|>R}|\xi|^2W_n\, \mathrm{d}\xi
    +
    \int_{|\xi|>R}|\xi|^2W_0\, \mathrm{d}\xi .
    \label{eq:weighted-convergence-split}
\end{align}
The first term tends to zero as $n\to\infty$ due to the assumption and the second term is uniformly small for
large $R$ by \eqref{eq:uniform-second-moment-tail}. The third term is also small for
large $R$, by the Fatou lemma and \eqref{eq:uniform-second-moment-tail}. Hence gathering these information clearly enables us to finish the proof.
\end{proof}

\subsection{Lyapunov functional and LaSalle principle}
For the reader's convenience, we now recall the classical LaSalle invariance principle in the form used below.
A simple, classical proof of this principle can be obtained directly by following, for example, the arguments in 
\cite[Section 9]{CazenaveHaraux1998} and
\cite{Ball2016}.

\begin{rem}[LaSalle invariance principle]\label{LaSalle}
Let $X$ be a complete metric space, and let $S(\tau):X\to X $ for $\tau \ge 0$ be a continuous dynamical system. Let $\mathcal{F}:X\to\mathbb R$ be a continuous Lyapunov functional such that
for every $w\in X$, the map $\tau\mapsto \mathcal{F}(S(\tau)w)$
is nonincreasing on $[0,\infty)$ and bounded from below. For $w\in X$, define the $\omega$-limit set by
\begin{align}\label{eq:omega-limit-abstract}
\omega(w)
:=
\{
v\in X:\
S(\tau_n)w\to v \ \text{in }X
\ \text{for some sequence } \tau_n\to\infty\}.
\end{align}
Suppose that the trajectory $\bigcup_{\tau \ge 0} \{S(\tau)w\}$ is relatively compact in $X$. Then the set $\omega(w)$ is nonempty, compact, and invariant under the mapping $S(\tau)$ . Moreover, one obtains
\begin{align}\label{eq:lasalle-conclusion}
\omega(w)\subset \mathcal{E}
:=
\{
v\in X:\
\mathcal{F}(S(\tau)v)=\mathcal{F}(v)
\ \text{for every } \tau\ge0\}
\end{align} 
as well as
\begin{align}
{\rm dist}_{X}\{S(\tau)w, \omega(w)\}\to 0\quad \text{as}\ \tau\to\infty.
\end{align}
\end{rem}

We apply this principle to analyze the asymptotic behavior of the rescaled solution $W$ given by \eqref{eq:def-W} as $s \to \infty$. To this end, we first introduce a Lyapunov functional for equation \eqref{eq:FP}, which serves as a key tool for establishing the convergence of $W$ to a stationary solution.

\begin{lemma}\label{lem:Lyapunov for W}
Let $W$ be defined by \eqref{eq:def-W}, where $u = u(x,t)$ is a solution to problem \eqref{eq:main}--\eqref{ini:L1Linf} with the logarithmic kernel \eqref{K=logx}. Assume that
\begin{equation}\label{initial_moment}
\int_{\mathbb{R}^d} |x|^2 u_0(x)\, \mathrm{d}x < \infty.
\end{equation}
Then
\begin{equation}\label{energy-dissipation-rescal}
\frac{d}{ds} \mathcal{F}[W(s)] = -\mathcal{D}[W(s)],
\end{equation}
where
\begin{align*}
\mathcal{F}[W(s)] &:= \int_{\mathbb{R}^d} W(\xi,s) \log W(\xi,s) \, \mathrm{d}\xi + \int_{\mathbb{R}^d} \frac{|\xi|^2}{4} W(\xi,s) \, \mathrm{d}\xi + \frac{1}{2} \int_{\mathbb{R}^d} W(\xi,s) (K * W)(\xi,s) \, \mathrm{d}\xi, \\
\mathcal{D}[W(s)] &:= \int_{\mathbb{R}^d} W(\xi,s) \left| \nabla \left( \log W(\xi,s) + \frac{|\xi|^2}{4} + (K * W)(\xi,s) \right) \right|^2 \, \mathrm{d}\xi.
\end{align*}
Moreover, for every $s_0\in\R$, the functional $\mathcal{F}[W(s)]$ is bounded from below; more precisely, there exists a constant $C > 0$ depending only on the dimension $d$, the mass $M$, the initial moment \eqref{initial_moment}, and $s_0$ such that
\[
\mathcal{F}[W(s)] \ge -C \quad \text{for all } s \geq s_0.
\]
\end{lemma}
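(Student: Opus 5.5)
The proof has two parts: the dissipation identity \eqref{energy-dissipation-rescal} and a lower bound for $\mathcal F$.

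\emph{Dissipation identity.} Write the rescaled equation \eqref{eq:FP} in gradient-flow form. Since $\Delta W = \nabla\cdot(W\nabla\log W)$ and $\frac12\nabla\cdot(\xi W)=\nabla\cdot\bigl(W\nabla\frac{|\xi|^2}{4}\bigr)$, we have
\begin{equation*}
\partial_s W=\nabla\cdot\Bigl(W\nabla\bigl(\log W+\tfrac{|\xi|^2}{4}+K*W\bigr)\Bigr).
\end{equation*}
The first variation of $\mathcal F$ is $\log W+1+\frac{|\xi|^2}{4}+K*W$. Here we use that $K$ is even, so the quadratic interaction term contributes $K*W$. Mass conservation \eqref{mass-for-W} removes the constant $1$. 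Differentiating $\mathcal F[W(s)]$, using the equation and integrating by parts once gives \eqref{energy-dissipation-rescal}.

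To justify this, I would rely on the regularity in Proposition~\ref{prop:properties}, the uniform bounds of Lemma~\ref{lem:estimates-W}, and the finite second moment \eqref{eq2:estimate for W}. For the logarithmic interaction term, note that $|K*W|\le C(1+\log(1+|\xi|))$, by splitting near and far field and using the moment bound. Then all integrals converge and the boundary terms vanish after a cut-off argument. Alternatively, one could transfer the free-energy identity \eqref{entropy dyssipation} from the original variables via \eqref{eq:inverse-W}. I would do the cut-off argument directly, since it is routine.

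\emph{Lower bound.} Bound each of the three terms of $\mathcal F$ from below for $s\ge s_0$.
\begin{enumerate}
\item The moment term is nonnegative.
\item For the entropy, use the standard Carleman-type estimate: split by $W\ge e^{-|\xi|^2}$ versus $W<e^{-|\xi|^2}$. This yields
\begin{equation*}
\int W\log W\ge -\int |\xi|^2W\,\d\xi-C_d .
\end{equation*}
Alternatively, use Jensen with respect to a Gaussian, $\int W\log(W/g)\ge M\log M$, where $g=(8\pi)^{-d/2}e^{-|\xi|^2/8}$. This gives $\int W\log W\ge M\log M-\frac{d M}{2}\log(8\pi)-\frac18\int|\xi|^2W$, which the $\frac14$ moment term absorbs with room to spare. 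The resulting bound depends on the moment only through \eqref{eq2:estimate for W}.
\item For the interaction term, $-\log|\xi-\eta|\ge -\log(1+|\xi|+|\eta|)\ge -|\xi|-|\eta|$. Hence
\begin{equation*}
\tfrac12\iint K(\xi-\eta)W(\xi)W(\eta)\,\d\xi\,\d\eta\ge -M\int|\xi|W\,\d\xi\ge -M\Bigl(M\int|\xi|^2W\Bigr)^{1/2}.
\end{equation*}
This is bounded by \eqref{eq2:estimate for W}.
\end{enumerate}
The same computation shows $\mathcal F$ is finite. Combining the three bounds gives $\mathcal F[W(s)]\ge -C(d,M,\mu_2(0),s_0)$ for all $s\ge s_0$.

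The main obstacle is justifying \eqref{energy-dissipation-rescal}. The difficulty is the unbounded kernel $K$ and the logarithm near $W=0$. I would handle it by regularizing the logarithm with $W+\varepsilon$ and cutting off in space, then passing to the limit using the uniform $W^{1,p}$ bounds and the second-moment tightness from Corollary~\ref{prop:compact for W}.
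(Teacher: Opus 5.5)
Your proposal is correct and follows essentially the paper's route. The paper likewise treats the dissipation identity as the rescaled analogue of \eqref{entropy dyssipation}, omitting the computation. For the lower bound it applies Jensen against the time-one Gaussian $(4\pi)^{-d/2}e^{-|\xi|^2/4}$, which absorbs exactly the $\frac14$ moment term, and uses $\log|\xi-\eta|\le\frac12\log(1+|\xi|^2)+\frac12\log(1+|\eta|^2)$ with \eqref{eq2:estimate for W}; this differs from your $\log|\xi-\eta|\le|\xi|+|\eta|$ plus Cauchy--Schwarz only cosmetically.

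One small correction: your interaction bound controls only the far field. Finiteness of $\mathcal F$ therefore needs the uniform $L^p$ bounds of Lemma~\ref{lem:estimates-W}, both for the near-diagonal logarithmic singularity and for the upper bound on $\int W\log W$, rather than ``the same computation''.
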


\begin{proof}
The functionals $\mathcal{F}$ and $\mathcal{D}$, as well as relation \eqref{energy-dissipation-rescal}, are analogues of the functionals in \eqref{entropy} and relation \eqref{entropy dyssipation}. 
Since their derivation is entirely analogous, we omit the explicit calculations. 
We emphasize that both $\mathcal{F}$ and $\mathcal{D}$ are well-defined and finite, owing to the properties of the solution $W(s)$ established in Lemma~\ref{lem:estimates-W}. 
Moreover, the map $s \mapsto \mathcal{F}[W(s)]$ is differentiable because $W(\xi,s)$ inherits the regularity of $u(x,t)$ stated in \eqref{regularity}.

Let us verify that the Lyapunov functional $\mathcal{F}$ is bounded from below. 
For the time-one Gaussian $G(\xi)  ={(4\pi)^{-d/2}} \exp(-{|\xi|^2}/{4})$ 
so that $\int_{\R^d} G(\xi) \, \mathrm{d}\xi = 1$, we apply the Jensen inequality to obtain
\begin{align*}
    \int_{\R^d}\dfrac{W}{M}\log \dfrac{W}{MG} \, \mathrm{d}\xi \ge 0.
\end{align*}
Therefore we get
\begin{align}\label{eq1: Lyapunov for W}
    \int_{\R^d} W\log W \, \mathrm{d}\xi + \dfrac{1}{4}\int_{\R^d}|\xi|^2 W \, \mathrm{d}\xi \ge M\log M -\dfrac{d}{2}(\log 4\pi)M =:C(M,d).
\end{align}
We next control the interaction term with $K(x) =-\log |x|$. Using the elementary inequality
\begin{align*}
    \log |\xi-\eta| \leq \dfrac{1}{2}\log (1+ |\xi|^2) + \dfrac{1}{2}\log (1 + |\eta|^2),
\end{align*}
we estimate
\begin{align}
    \int_{\R^d}WK * W \, \mathrm{d}\xi &= -\int_{\R^d}\int_{\R^d}W(\xi)W(\eta)\log |\xi-\eta| \, \mathrm{d}\eta \, \mathrm{d}\xi\notag\\
    & \ge  - \int_{\R^d}\int_{\R^d}W(\xi)W(\eta)\log (1 + |\xi|^2) \, \mathrm{d}\eta \, \mathrm{d}\xi\notag\\
    &\ge  - CM \int_{\R^d}|\xi|^2 W  \, \mathrm{d}\xi.\notag
\end{align}
Thus, in view of the second moment bound \eqref{eq2:estimate for W}, it follows that, for every $s\geq s_0$,
\begin{align}\label{eq2:Lyapunov for W}
\int_{\R^d}WK*W \, \mathrm{d}\xi \ge C
\end{align}
with a number $C\in \R$ depending only on the mass $M$, the dimension $d\geq 2$, and the initial moment \eqref{initial_moment}.

Finally, combining inequalities \eqref{eq1: Lyapunov for W} and \eqref{eq2:Lyapunov for W}, we conclude that the Lyapunov functional $\mathcal{F}$ is bounded from below for all $s\geq s_0$.
\end{proof}

We apply the LaSalle principle as stated in Remark~\ref{LaSalle} to
the dynamical system $S(\tau)$ induced by  the rescaled equation \eqref{eq:FP}.
For every $M > 0$ and some $q \in (1,\infty)$, set 
\begin{align}\label{XM:metric-space}
    \mathcal{X}_M := \{f \in L^1_2(\mathbb R^d)\cap L^q(\mathbb R^d): f \ge 0,\quad \int_{\R^d} f(\xi)\, \mathrm{d}\xi = M\},
\end{align}
where
\begin{align}\label{eq:L12-space}
L^1_2(\mathbb R^d)
:= \{
f\in L^1(\mathbb R^d):
\int_{\mathbb R^d}(1+|\xi|^2)|f(\xi)|\, \mathrm{d}\xi<\infty\}.
\end{align}
We endow $\mathcal{X}_M$ with the metric
\begin{align}\label{eq:Y-metric}
d_{\mathcal{X}_M}(f,g)
:= \|f-g\|_{1} + \|f-g\|_q + \int_{\R^d}|\xi|^2|f(\xi)-g(\xi)|\, \mathrm{d}\xi .
\end{align}
Then $\mathcal X_M$ is a complete metric space.

The rescaled equation \eqref{eq:FP} generates a continuous dynamical system
$S(\tau):\mathcal X_M\to\mathcal X_M$ for each $\tau \ge 0$. Let $W$ be the rescaled solution
defined by \eqref{eq:def-W}. For fixed $s_0\in\mathbb R$, we have
\begin{align*}
S(\tau)W(s_0)=W(s_0+\tau)
\qquad\text{for every }\tau\ge 0 .
\end{align*}
Hence the positive trajectory of $W(s_0)$ is
\begin{align*}
    \bigcup_{\tau\ge0}\{S(\tau)W(s_0)\}
    =
    \{W(s):s\ge s_0\}.
\end{align*}
Moreover, the trajectory is relatively compact in $\mathcal X_M$ by the compactness estimates and the convergence of the second moment established in Corollary~\ref{prop:compact for W}.

The Lyapunov functional $\mathcal F$ given by Lemma~\ref{lem:Lyapunov for W}
is continuous with respect to the $\mathcal X_M$-topology and also is nonincreasing along trajectories. Therefore, the LaSalle principle stated in Remark~\ref{LaSalle} applies to the dynamical system $S(\tau)$ on $\mathcal{X}_M$.

\begin{lemma}\label{lemma:omega-energy-space:new}
Let $W$ be defined by \eqref{eq:def-W},
where $u = u(x,t)$ is a solution to problem \eqref{eq:main}--\eqref{ini:L1Linf} with the logarithmic kernel \eqref{K=logx} and with
a nonnegative initial condition satisfying
$
\int_{\mathbb{R}^d} |x|^2 u_0(x) \, \mathrm{d}x < \infty.
$
Then $\omega(W)\neq \emptyset$ and  
$$\omega(W)\subset \mathcal{X}_M \cap \left(\bigcap_{1 \le p < \infty} L^p(\mathbb{R}^d)\right).$$
\end{lemma}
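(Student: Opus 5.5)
The plan is to take a sequence $s_n\to\infty$ and extract a convergent subsequence of $W(s_n)$ in $\mathcal X_M$; this gives $\omega(W)\neq\emptyset$. The limit is then shown to lie in every $L^p$, $1\le p<\infty$, by lower semicontinuity. Throughout, $W$ is viewed as the trajectory $S(\tau)W(s_0)=W(s_0+\tau)$ for a fixed $s_0\in\R$; this is legitimate because Lemma~\ref{lem:estimates-W} gives $W(s_0)\in L^1_2(\R^d)\cap L^q(\R^d)$, $W(s_0)\ge0$, and mass $M$ by \eqref{mass-for-W}, so $W(s_0)\in\mathcal X_M$.

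\emph{Nonemptiness.} Let $s_n\to\infty$ with $s_n\ge s_0$. Corollary~\ref{prop:compact for W} yields a subsequence and a limit $W_0$ with $W(s_n)\to W_0$ in $L^1(\R^d)$ and in $L^q(\R^d)$. I will pass to further subsequences, using the same corollary for each exponent, and note that all these limits coincide with $W_0$ almost everywhere. The second part of Corollary~\ref{prop:compact for W} then gives $\int|\xi|^2|W(s_n)-W_0|\,\d\xi\to0$. Hence $d_{\mathcal X_M}(W(s_n),W_0)\to0$, so $W_0\in\omega(W)$.

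\emph{Membership in $\mathcal X_M$.} Nonnegativity and $\int W_0=M$ pass to the limit under $L^1$ convergence. We have $W_0\in L^q$ because it is an $L^q$ limit. Finiteness of $\int|\xi|^2W_0$ follows from Fatou's lemma and the uniform bound \eqref{eq2:estimate for W}. (Since $\mathcal X_M$ is complete, this step is in any case automatic.)

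\emph{Membership in every $L^p$.} Fix $p\in[1,\infty)$ and any $v\in\omega(W)$, with $W(\tau_n)\to v$ in $\mathcal X_M$. In particular $W(\tau_n)\to v$ in $L^1$, so a subsequence converges almost everywhere. Fatou's lemma then gives $\|v\|_p\le\liminf\|W(\tau_n)\|_p\le\sup_{s\ge s_0}\|W(s)\|_p<\infty$, where the last bound is \eqref{eq1:estimate for W}. The case $p=\infty$ is not needed, though the same argument gives it too.

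The main point of care, rather than a real obstacle, is consistency of topologies. Elements of $\omega(W)$ are defined through convergence in the metric $d_{\mathcal X_M}$, which includes the weighted $|\xi|^2$ term. That term is handled only by the second-moment part of Corollary~\ref{prop:compact for W}, and hence ultimately by the tightness estimate \eqref{eq:uniform-second-moment-tail}. Everything else is routine.
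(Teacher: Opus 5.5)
Your proposal is correct and follows essentially the same route as the paper. Nonemptiness comes from the relative compactness of the trajectory in $\mathcal X_M$, which rests on Corollary~\ref{prop:compact for W} including its weighted second-moment part. Membership in every $L^p$ comes from a.e.\ convergence along a subsequence, Fatou's lemma, and the uniform bounds \eqref{eq1:estimate for W}; you just write out the steps the paper calls routine.
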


\begin{proof}
Since the trajectory is relatively compact in $\mathcal X_M$, the $\omega$-limit set $\omega(W)$ is nonempty. Let $W_0 \in \omega (W)$, then there exist a sequence
\(\{s_n\}\) such that
\begin{equation}\label{eq:Wn-L1-convergence}
W(\cdot,s_n)\longrightarrow W_0
\qquad\text{in }\mathcal{X}_M.
\end{equation}
After passing to a further subsequence, we may also assume that
\[
W(\xi,s_n)\longrightarrow W_0(\xi)
\qquad\text{for a.e. }\xi\in\mathbb R^d.
\]
By applying the Fatou lemma
and the $L^p$-bounds in \eqref{eq1:estimate for W}, it is routine to show that $W_0 \in \bigcap_{1 \le p < \infty} L^p(\mathbb{R}^d)$.
\end{proof}

The following lemma establishes the first step toward applying the LaSalle principle recalled in Remark~\ref{LaSalle}.

\begin{lemma}\label{lemma:stationaly for W}
Let \(W\) be defined by \eqref{eq:def-W}, where \(u=u(x,t)\) is a
solution of \eqref{eq:main}--\eqref{ini:L1Linf} with the logarithmic
kernel \eqref{K=logx}. Assume that
\(
\int_{\mathbb R^d}|x|^2u_0(x)\, \mathrm{d}x<\infty.
\)
Then every $W_0\in\omega(W)$ is a stationary solution of \eqref{eq:FP} in the weak sense.
In particular, $\mathcal D[W_0]=0$.
\end{lemma}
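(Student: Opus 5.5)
The plan is to combine the LaSalle principle (Remark~\ref{LaSalle}) with the energy identity \eqref{energy-dissipation-rescal}, integrated in time. Since $s\mapsto\mathcal F[W(s)]$ is nonincreasing and bounded below on $[s_0,\infty)$ by Lemma~\ref{lem:Lyapunov for W}, integration gives
\[
\int_{s_0}^\infty \mathcal D[W(s)]\,\d s \le \mathcal F[W(s_0)]+C<\infty .
\]
Fix $W_0\in\omega(W)$ with $W(s_n)\to W_0$ in $\mathcal X_M$. I would work with the time-shifted sequence $W_n(\xi,\tau):=W(\xi,s_n+\tau)$ for $\tau\in[0,1]$.

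First I need compactness of $W_n$ in time. The bounds of Lemma~\ref{lem:estimates-W} show that $W_n$, $\nabla W_n$ and $\nabla K*W_n$ (via \eqref{ineq:kernel-estimate_2}) are bounded in $L^\infty$ uniformly in $n$ and $\tau$. Hence $\partial_\tau W_n$ is bounded in $L^\infty(0,1;H^{-1}_{\rm loc})$. Together with the uniform $W^{1,p}$ bounds and Corollary~\ref{prop:compact for W}, an Aubin--Lions argument should give $W_n\to \overline W$ in $C([0,1];L^p_{\rm loc})$, and, by the uniform second-moment tails \eqref{eq:uniform-second-moment-tail}, also in $C([0,1];L^1)$. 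Here $\overline W(0)=W_0$. Alternatively, the continuity of the semigroup $S(\tau)$ on $\mathcal X_M$ gives directly $W_n(\tau)=S(\tau)W(s_n)\to S(\tau)W_0=:\overline W(\tau)$ for each $\tau$.

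Next I show that the limit is constant in time. Since $\int_0^1\mathcal D[W_n(\tau)]\,\d\tau=\int_{s_n}^{s_n+1}\mathcal D\to0$, I use the identity
\[
W\Bigl|\nabla\bigl(\log W+\tfrac{|\xi|^2}4+K*W\bigr)\Bigr|^2=\bigl|J\bigr|^2/W,
\qquad J:=\nabla W+\tfrac12\xi W+W\nabla K*W .
\]
Cauchy--Schwarz then gives $\|J_n\|_{L^1(B_R\times(0,1))}^2\le \|W_n\|_{L^1}\int_0^1\mathcal D[W_n]\to0$. Since $\partial_\tau W_n=\nabla\cdot J_n$, testing against $\varphi\in C_c^\infty$ yields $\int (W_n(\tau)-W_n(0))\varphi\to0$ uniformly in $\tau$. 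Therefore $\overline W(\tau)=W_0$ for all $\tau\in[0,1]$.

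Finally I identify $W_0$ as a stationary solution and show $\mathcal D[W_0]=0$. The flux $J_n$ converges in distributions to $\nabla W_0+\frac12\xi W_0+W_0\nabla K*W_0$. For the linear terms this uses $W_n\to W_0$ in $L^1\cap L^p$ with uniform gradient bounds, giving weak convergence of $\nabla W_n$. For the nonlinear term, $\nabla K*W_n\to\nabla K*W_0$ in $L^{p}_{\rm loc}$ by Lemma~\ref{kernel-estimate}(1) applied with suitable exponents, since $W_n\to W_0$ in every $L^q$, $1<q<\infty$. Since $J_n\to0$ in $L^1_{\rm loc}$, this gives $\nabla W_0+\frac12\xi W_0+W_0\nabla K*W_0=0$ a.e. In particular $\nabla\cdot$ of this expression vanishes, which is the weak stationary form of \eqref{eq:FP}. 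For $\mathcal D[W_0]=0$, I would use weak lower semicontinuity of the convex functional $(W,J)\mapsto\int|J|^2/W$ under the convergences above, giving $\mathcal D[W_0]\le\liminf\int_0^1\mathcal D[W_n]=0$. Alternatively, it follows directly because the flux of $W_0$ is zero, so $|J|^2/W_0=0$ wherever $W_0>0$.

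The main obstacle is the passage to the limit in the nonlinear flux $W_n\nabla K*W_n$ and in the dissipation. The kernel $\nabla K$ only lies in $L^{d,\infty}$, so its convergence has to come from the strong $L^q$ convergence combined with the Hardy--Littlewood--Sobolev bound of Lemma~\ref{kernel-estimate}. There is also the convention $|J|^2/W=0$ on $\{W=0\}$; I would handle it by treating $\mathcal D$ through the joint convex function $|J|^2/W$ rather than through $\log W$.
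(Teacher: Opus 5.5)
Your proposal is correct, but it takes a genuinely different route from the paper.

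\textbf{The paper's route.} It uses the abstract LaSalle conclusion $\omega(W)\subset\mathcal E$, i.e.\ $\mathcal F(S(\tau)W_0)=\mathcal F(W_0)$ for all $\tau\ge0$. It then needs parabolic regularity and the strong maximum principle for the \emph{new} trajectory $S(\tau)W_0$, and applies the energy identity on $[a,b]$ to get $\mathcal D[S(\tau)W_0]=0$ for a.e.\ $\tau>0$. From this it deduces zero flux for $S(\tau)W_0$. Finally it transfers stationarity back to $W_0$ through $S(t)W_0=S(t-\tau_n)S(\tau_n)W_0=S(\tau_n)W_0$ and continuity as $\tau_n\downarrow0$.

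\textbf{Your route.} You never use invariance of $\omega(W)$ or constancy of $\mathcal F$ on it. You stay on the original orbit and use only $\int_{s_0}^\infty\mathcal D[W(s)]\,\mathrm{d}s<\infty$. You kill the flux of the time-shifted orbits via $\|J_n\|_{L^1(\mathbb R^d\times(0,1))}^2\le M\int_0^1\mathcal D[W_n]\,\mathrm{d}\tau\to0$. You then pass to the limit in $J_n$, handling the nonlinear part through Lemma~\ref{kernel-estimate}(1) and strong $L^q$ convergence. In effect you prove the needed part of LaSalle by hand.

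\textbf{What each approach buys.} Your argument does not depend on several facts the paper asserts without proof:
\begin{itemize}
\item continuity of $\mathcal F$ in the $\mathcal X_M$ metric;
\item the energy identity, regularity and positivity along $S(\tau)W_0$;
\item in your Aubin--Lions variant, even continuity of $S(\tau)$ on $\mathcal X_M$.
\end{itemize}
You also obtain the zero-flux relation $\nabla W_0+\frac12\xi W_0+W_0\nabla K*W_0=0$ for $W_0$ itself. This is the form in which stationarity is actually exploited in Theorem~\ref{thm:uniqueness-stationary-profile}, via $\mathcal D[U]=0$. The paper's argument is shorter once the dynamical-system framework on $\mathcal X_M$ is granted.

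\textbf{Two minor remarks.} First, the Aubin--Lions step can be skipped. Your flux bound already gives $W_n(\tau)\to W_0$ in distributions, uniformly in $\tau$. Corollary~\ref{prop:compact for W} upgrades this to strong $L^p$ convergence for each fixed $\tau$. Together with the uniform bounds of Lemma~\ref{lem:estimates-W} and dominated convergence in $\tau$, that suffices to identify the limit of $J_n$. Second, for $\mathcal D[W_0]=0$ your zero-flux justification (with $|J|^2/W_0:=0$ on $\{W_0=0\}$, where $\nabla W_0=0$ a.e.) is enough; positivity of $W_0$ is not needed here and is recovered later in the uniqueness proof.
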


\begin{proof}
Let $W_0\in\omega(W)$. By the LaSalle principle applied in the complete
metric space $\mathcal X_M$ to the relatively compact positive trajectory of
$W$, we have
\begin{align}\label{eq:F-constant-along-W0}
    \mathcal F(S(\tau)W_0)=\mathcal F(W_0)
    \qquad\text{for every } \tau\ge0,
\end{align}
where the functional $\mathcal{F}$ is established in Lemma~\ref{lem:Lyapunov for W}. Lemma~\ref{lemma:omega-energy-space:new} gives
$W_0\in\mathcal X_M\cap\bigcap_{1\le p<\infty}L^p(\mathbb R^d)$ and in particular, $W_0\ge0$ in $\R^d$ and $\int_{\mathbb R^d}W_0\,\mathrm{d}\xi=M>0$.
Hence the parabolic regularity theorem and the strong maximum principle for
\eqref{eq:FP} imply that $S(\tau)W_0$ is regular and positive in $\mathbb R^d$ for every $\tau>0$. 
Let $0<a<b$. Since $S(\tau)W_0$ is sufficiently regular for $\tau>0$,
the Lyapunov identity in Lemma~\ref{lem:Lyapunov for W} is valid on $[a,b]$.
Using \eqref{eq:F-constant-along-W0}, we obtain
\begin{align*}
    0
    =
    \mathcal F(S(b)W_0)-\mathcal F(S(a)W_0)
    =
    -\int_a^b \mathcal D[S(\tau)W_0]\,\mathrm{d}\tau,
\end{align*}
which implies that
\begin{align}\label{eq:D-zero-ae}
    \mathcal D[S(\tau)W_0]=0
    \qquad\text{for a.e. } \tau>0 .
\end{align}
Thus the definition of $\mathcal{D}[S(\tau)W_0]$ in Lemma~\ref{lem:Lyapunov for W}
and the positivity of $S(\tau)W_0$ allow us to obtain the zero-flux identity
\begin{align}\label{eq:zero-flux-W0-shifted}
    \nabla S(\tau)W_0
    +
    \frac{\xi}{2}S(\tau)W_0
    +
    S(\tau)W_0\bigl(\nabla K*S(\tau)W_0\bigr)
    =
    0
\end{align}
in the sense of distributions. Consequently $S(\tau)W_0$ is a weak stationary
solution of \eqref{eq:FP}.

We now show that $W_0$ itself is stationary solution of \eqref{eq:FP}. Fix $t>0$ and choose a sequence
$\tau_n\downarrow0$ such that \eqref{eq:zero-flux-W0-shifted} holds with
$\tau=\tau_n$ and $\tau_n<t$. Since $S(\tau_n)W_0$ is stationary, we have
\begin{align*}
    S(t-\tau_n)S(\tau_n)W_0
    =
    S(\tau_n)W_0 .
\end{align*}
The semigroup property of the dynamical system gives
\begin{align*}
    S(t)W_0
    =
    S(\tau_n)W_0\qquad \text{for every } t > 0.
\end{align*}
Letting $n\to\infty$ and using the continuity of the map $\tau\mapsto S(\tau)W_0$ in $\mathcal X_M$, we obtain
\begin{align*}
    S(t)W_0=W_0.
\end{align*}
Since $t>0$ is arbitrary, we note that $W_0$ is a stationary solution of \eqref{eq:FP}.

Finally, since $W_0=S(t)W_0$ for every $t>0$, the parabolic regularity of
$S(t)W_0$ implies that $W_0$ is sufficiently regular. Therefore the dissipation
$\mathcal D[W_0]$ is well-defined, and from \eqref{eq:D-zero-ae} we also obtain
\begin{align*}
    \mathcal D[W_0]=0 .
\end{align*}
This completes the proof.
\end{proof}



\subsection{Uniqueness of the self-similar profile}
It remains to prove the uniqueness of stationary solutions to the rescaled
equation \eqref{eq:FP} in the class obtained above. More precisely, we show
that \eqref{eq:FP} admits at most one nonnegative weak stationary solution in
\begin{align}\label{domain:XM}
\mathcal X_M
\cap
\left(\bigcap_{1\le p<\infty}L^p(\mathbb R^d)\right).
\end{align}
Owing to Lemma~\ref{lemma:omega-energy-space:new} and Lemma~\ref{lemma:stationaly for W}, every element of $\omega(W)$ belongs to the class
\eqref{domain:XM} and is a stationary weak solution of \eqref{eq:FP}. Hence,
once the uniqueness is proved, all omega-limit points of $W$ coincide with the
same stationary profile.




To proceed, we establish the following inequality for the logarithmic interaction energy. The sign of the quadratic form below plays a crucial role in the uniqueness argument.

\begin{lemma}\label{lem:fourier inequality}
Let $h \in L^1(\mathbb{R}^d) \cap L^p(\mathbb{R}^d)$ for some $p \in (1,\infty)$. Assume that
\[
\int_{\mathbb{R}^d} h(x) \, \mathrm{d}x = 0
\quad
\text{and}
\quad
\int_{\mathbb{R}^d} \log(1+|x|^2) |h(x)| \, \mathrm{d}x < \infty.
\]
Then
\begin{equation}\label{log:inequality}
-\iint_{\mathbb{R}^d \times \mathbb{R}^d} \log|x-y| \, h(x)h(y) \, \mathrm{d}y \, \mathrm{d}x \ge 0.
\end{equation}
\end{lemma}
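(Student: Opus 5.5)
The plan is to avoid Fourier analysis of the distribution $\log|x|$ directly and instead use an integral representation of the logarithm in terms of Gaussians, for which positivity is elementary. For $r>0$ we have the Frullani-type identity
\[
-\log r = \frac12\int_0^\infty \bigl(e^{-\tau r^2}-e^{-\tau}\bigr)\frac{\d\tau}{\tau},
\]
which follows from $\int_0^\infty (e^{-a\tau}-e^{-b\tau})\,\tau^{-1}\,\d\tau=\log(b/a)$ with $a=r^2$, $b=1$.

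Inserting this with $r=|x-y|$ into the left-hand side of \eqref{log:inequality}, the constant term $e^{-\tau}$ drops out because $\int h=0$. Formally we obtain
\[
-\iint \log|x-y|\,h(x)h(y)\,\d y\,\d x=\frac12\int_0^\infty \Bigl(\iint e^{-\tau|x-y|^2}h(x)h(y)\,\d y\,\d x\Bigr)\frac{\d\tau}{\tau}.
\]
For each fixed $\tau$ the inner double integral is nonnegative. Indeed, $e^{-\tau|z|^2}$ has a positive Fourier transform $c_d\tau^{-d/2}e^{-|\zeta|^2/(4\tau)}$, so by Plancherel the inner integral equals $c\,\tau^{-d/2}\int e^{-|\zeta|^2/4\tau}|\hat h(\zeta)|^2\,\d\zeta\ge0$. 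Equivalently, it equals $c\,\|e^{\delta\Delta}h\|_2^2$ with $4\delta=1/\tau$ (the Gaussian semigroup property). This is legitimate since $h\in L^1$, so $\hat h$ is bounded and continuous. Hence the right-hand side is nonnegative, possibly $+\infty$.

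The main obstacle is justifying the Fubini exchange, since $\iint|\log|x-y||\,|h(x)||h(y)|\,\d x\,\d y$ must be finite and the $\tau$-integrand is not absolutely integrable without cancellation. I would proceed as follows.

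First, the left side is absolutely convergent. For $|x-y|\ge1$ use $\log|x-y|\le \frac12\log(1+|x|^2)+\frac12\log(1+|y|^2)$ together with the logarithmic moment hypothesis. For $|x-y|<1$ use Young's inequality: $\log|z|\mathbf 1_{\{|z|<1\}}\in L^{p'}$ for every finite $p'$, and $h\in L^1\cap L^p$.

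Second, truncate the $\tau$-integral to $[\varepsilon,1/\varepsilon]$. On this range everything is bounded and Fubini applies. We get $\frac12\int_\varepsilon^{1/\varepsilon}(\cdots)\,\d\tau/\tau=\iint k_\varepsilon(x-y)h(x)h(y)$, where
\[
k_\varepsilon(r)=\frac12\int_\varepsilon^{1/\varepsilon}\bigl(e^{-\tau r^2}-e^{-\tau}\bigr)\frac{\d\tau}{\tau}.
\]
The left side of this identity is $\ge0$ by the Plancherel step.

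Third, let $\varepsilon\to0$. We have $k_\varepsilon(r)\to-\log r$ pointwise. A direct estimate gives $|k_\varepsilon(r)|\le C(1+|\log r|)$ uniformly in $\varepsilon$: split according to whether $\tau r^2\lessgtr1$, and bound the differences of exponentials by $\min\{1,\tau|r^2-1|\}$, which yields a bound by $|\log r^2|$ plus a constant. Since $(1+|\log|x-y||)|h(x)||h(y)|$ is integrable by the first step (using also $h\in L^1$), dominated convergence gives $\iint k_\varepsilon(x-y)h(x)h(y)\to -\iint\log|x-y|\,h(x)h(y)$. Thus the limit of nonnegative numbers is nonnegative, which proves \eqref{log:inequality}.
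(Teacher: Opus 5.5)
Your proof is correct, and it takes a different route from the paper.

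\textbf{The paper's route.} It first treats zero-mean Schwartz functions, using the distributional Fourier transform $\widehat{-\log|x|}=A_d\,\mathrm{FP}(|\xi|^{-d})+B_d\delta_0$ with $A_d>0$. The zero mean removes the Dirac mass and turns the finite part into an ordinary integral. It then extends to general $h$ by approximating $h$ with zero-mean Schwartz functions in the norm $\|\cdot\|_1+\|\cdot\|_p+\int\log(1+|x|^2)|\cdot|$. The energy passes to the limit through the near-field/far-field splitting of $\log|x-y|$.

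\textbf{Your route.} You write $-\log r$ as a superposition of Gaussians via Frullani's identity, so positivity reduces to the positive-definiteness of each Gaussian. You regularize the kernel, by truncating in $\tau$, rather than the function $h$. The same near- and far-field estimates then appear only once, as a dominating function.

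\textbf{Comparison.} Your argument is self-contained. You need neither the Fourier transform of $\log|x|$ with its finite-part regularization and the sign of $A_d$, which the paper quotes, nor the density of zero-mean Schwartz functions in the log-weighted space, which the paper asserts without proof. Nothing is lost either. Your $\tau$-integrand on the Fourier side is nonnegative, so Tonelli together with $\int_0^\infty \tau^{-d/2-1}e^{-c|\zeta|^2/\tau}\,\d\tau=\Gamma(d/2)\,(c|\zeta|^2)^{-d/2}$ recovers the paper's representation $A_d\int|\widehat h|^2|\xi|^{-d}\,\d\xi$ for general $h$.

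\textbf{A step to tighten: the uniform bound on $k_\varepsilon$.} As written, the bound $|e^{-\tau r^2}-e^{-\tau}|\le\min\{1,\tau|r^2-1|\}$ does not control the range $\tau\to\infty$. On $[1,1/\varepsilon]$ it contributes a term of order $\log(1/\varepsilon)$, so you must use the exponential decay there. There is a simpler route. For fixed $r>0$, the integrand $e^{-\tau r^2}-e^{-\tau}$ has the sign of $1-r$ for every $\tau>0$. Hence $|k_\varepsilon(r)|\le|\log r|$, and $k_\varepsilon(r)\to-\log r$ monotonically, which gives the domination at once.

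A trivial slip that does not affect positivity: with $G$ as in \eqref{heat kernel}, your heat-semigroup reformulation holds with $8\delta=1/\tau$, not $4\delta=1/\tau$.
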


\begin{proof}
First assume that $h \in \mathcal{S}(\mathbb{R}^d)$ (the Schwartz class) with $\int_{\mathbb{R}^d} h(x) \, \mathrm{d}x = 0$. We adopt the Fourier transform convention
\[
\widehat{f}(\xi) = \int_{\mathbb{R}^d} f(x) e^{-2\pi i x \cdot \xi} \, \mathrm{d}x.
\]
In the sense of tempered distributions, the Fourier transform of the logarithmic kernel satisfies
\[
\widehat{-\log|x|} = A_d \, \mathrm{FP}(|\xi|^{-d}) + B_d \delta_0, \qquad A_d > 0.
\]
Since
\[
\widehat{h}(0) = \int_{\mathbb{R}^d} h(x) \, \mathrm{d}x = 0,
\]
the Dirac mass does not contribute. Moreover, because $\widehat{h}(\xi) = O(|\xi|)$ as $\xi \to 0$, the finite-part distribution reduces to an ordinary integral when evaluated on $|\widehat{h}|^2$. Hence,
\begin{equation}\label{eq:Fourier-log-energy}
-\iint_{\mathbb{R}^d \times \mathbb{R}^d} \log|x-y| \, h(x)h(y) \, \mathrm{d}y \, \mathrm{d}x = A_d \int_{\mathbb{R}^d} \frac{|\widehat{h}(\xi)|^2}{|\xi|^d} \, \mathrm{d}\xi \ge 0.
\end{equation}

We now consider a general function $h$ satisfying the assumptions of the lemma. Choose a sequence $h_n \in \mathcal{S}(\mathbb{R}^d)$ such that
\[
\int_{\mathbb{R}^d} h_n(x) \, \mathrm{d}x = 0
\]
and
\begin{equation}\label{eq:approx-log-h}
\|h_n - h\|_1 + \|h_n - h\|_p + \int_{\mathbb{R}^d} \log(1+|x|^2) |h_n(x) - h(x)| \, \mathrm{d}x \to 0 \qquad \text{as } n \to \infty.
\end{equation}
It remains to pass to the limit in the logarithmic interaction energy.
Set
\[
k_0(z) = |\log|z|| \, \mathbf{1}_{\{|z|<1\}}, \qquad k_\infty(z) = \log|z| \, \mathbf{1}_{\{|z|\ge1\}}.
\]
For the singular part, we have $k_0 \in L^r(\mathbb{R}^d)$ for every $1 \le r < \infty$. Hence, the  Young and the H\"older inequalities imply that
\[
\iint_{|x-y|<1} |\log|x-y|| \, |h_n(x) - h(x)| \, |h_n(y)| \, \mathrm{d}y \, \mathrm{d}x \to 0 \qquad \text{as } n \to \infty.
\]
Indeed, if $1 < p \le 2$, one takes
\[
r = \frac{p}{2(p-1)},
\]
whereas if $p > 2$, one takes $r = p'$ along with the $L^1$-convergence in \eqref{eq:approx-log-h}.

For the far-field part, we use the bound
\[
0 \le \log|x-y| \le \frac{1}{2}\log(1+|x|^2) + \frac{1}{2}\log(1+|y|^2) \qquad \text{for } |x-y| \ge 1.
\]
Therefore, relation \eqref{eq:approx-log-h} implies that
\[
\iint_{|x-y|\ge1} \log|x-y| \, |h_n(x) - h(x)| \, |h_n(y)| \, \mathrm{d}y \, \mathrm{d}x \to 0 \qquad \text{as } n \to \infty.
\]
The same estimates hold with $h_n(y) - h(y)$ in place of $h_n(x) - h(x)$. Consequently,
\[
\iint_{\mathbb{R}^d \times \mathbb{R}^d} \log|x-y| \, h_n(x)h_n(y) \, \mathrm{d}y \, \mathrm{d}x \to \iint_{\mathbb{R}^d \times \mathbb{R}^d} \log|x-y| \, h(x)h(y) \, \mathrm{d}y \, \mathrm{d}x.
\]
Applying inequality \eqref{eq:Fourier-log-energy} to $h_n$ and passing to the limit yields inequality \eqref{log:inequality}.
\end{proof}

We are now ready to prove the uniqueness of stationary solutions to equation \eqref{eq:FP}. Here, the integrability
assumption in \eqref{domain:XM} allows us to choose an exponent larger than
$d$, which will be used to obtain higher regularity of stationary solutions.

\begin{theorem}[Uniqueness of the self-similar profile]
\label{thm:uniqueness-stationary-profile}
Let \(M>0\). Suppose that
\begin{equation}\label{U1U2}
U_1,U_2
\in
\mathcal{X}_M\cap
\bigcap_{1\leq p<\infty}L^p(\mathbb R^d)
\end{equation}
are weak solutions of
\begin{equation}\label{eq:stationary-zero-flux}
\Delta U
+\frac12\nabla\cdot(\xi U)
+\nabla\cdot\bigl(U(\nabla K*U)\bigr)
=0
\qquad\text{in }\mathbb R^d,
\end{equation}
where
\(
K(x)=-\log|x|.
\)
Then
\(
U_1=U_2
\)
a.e. in $\mathbb R^d$.
In particular, equation \eqref{eq:stationary-zero-flux} admits at
most one solution in the class \eqref{U1U2}.
\end{theorem}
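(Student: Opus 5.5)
The plan is the standard convexity argument adapted to the repulsive logarithmic interaction. The strict monotonicity of the logarithm will be paired with the nonnegativity of the logarithmic energy on zero-mass perturbations from Lemma~\ref{lem:fourier inequality}.

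\emph{Step 1: regularity, positivity and the zero-flux identity.} Since $U_j\in\bigcap_{p<\infty}L^p$, Lemma~\ref{kernel-estimate} gives $V_j:=\nabla K*U_j\in L^\infty$. For $d\ge3$ we also have $\nabla\cdot V_j=-(d-2)|x|^{-2}*U_j\in L^\infty$, and for $d=2$ we have $\nabla\cdot V_j=-2\pi U_j$.

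Viewing \eqref{eq:stationary-zero-flux} as a linear elliptic equation $\Delta U+\nabla\cdot(U(\xi/2+V_j))=0$ with locally bounded drift, local elliptic regularity (bootstrapping with an exponent $p>d$) yields $U_j\in W^{2,p}_{\rm loc}$. The strong maximum principle together with $\int U_j=M>0$ then gives $U_j>0$.

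Next we show that the flux $J_j=\nabla U_j+U_j(\xi/2+V_j)$ vanishes. Write $J_j=U_j\nabla\bigl(\log U_j+|\xi|^2/4+K*U_j\bigr)$. Then $\nabla\cdot J_j=0$ and $J_j\in L^1$, by the moment bound and $\nabla U_j\in L^1$ (a local estimate combined with the weighted integrability). From this I would like to conclude $J_j\equiv0$. The cleanest route I see is to redo the LaSalle argument: $U_j$ is a stationary point of $S(\tau)$, so $\mathcal D[U_j]=\frac{d}{ds}\mathcal F$ along a constant trajectory vanishes. This uses the Lyapunov identity of Lemma~\ref{lem:Lyapunov for W} applied to $S(\tau)U_j\equiv U_j$. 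Hence $\nabla(\log U_j+|\xi|^2/4+K*U_j)=0$ a.e., and since $\mathbb R^d$ is connected,
\[
\log U_j+\frac{|\xi|^2}{4}+K*U_j=c_j\qquad\text{in }\mathbb R^d .
\]
This step is the main obstacle. It must be justified that $\mathcal D[U_j]$ is finite and that the Lyapunov identity holds for a general weak stationary solution in the class \eqref{U1U2}, not just for $\omega$-limit points. I would try first to invoke the parabolic regularity and well-posedness of $S(\tau)$ on $\mathcal X_M$. Failing that, I would test the equation directly against $\log U_j+|\xi|^2/4+K*U_j$ with cutoffs.

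\emph{Step 2: the convexity identity.} Set $h=U_1-U_2$. Then $\int h=0$, and $\int\log(1+|\xi|^2)|h|<\infty$ by the second moments, so Lemma~\ref{lem:fourier inequality} applies to $h$. Subtract the two relations from Step 1, multiply by $h$ and integrate. The constants drop out because $\int h=0$, and so do the quadratic terms. This gives
\[
\int_{\mathbb R^d}(\log U_1-\log U_2)(U_1-U_2)\,\d\xi+\int_{\mathbb R^d}h\,(K*h)\,\d\xi=0 .
\]
Each integral is finite. Indeed, $\log U_j=c_j-|\xi|^2/4-K*U_j$, and $|K*U_j(\xi)|\le C(1+\log(1+|\xi|^2))$ by splitting the kernel near and far as in the proof of Lemma~\ref{lem:fourier inequality}; multiplying by $|h|$ with finite second moments is integrable.

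The first integrand is nonnegative and vanishes only where $U_1=U_2$, by strict monotonicity of $\log$. The second term equals $-\iint\log|x-y|\,h(x)h(y)\ge0$ by Lemma~\ref{lem:fourier inequality}. Both terms therefore vanish, so $(\log U_1-\log U_2)(U_1-U_2)=0$ a.e., which gives $U_1=U_2$ a.e.
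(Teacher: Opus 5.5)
Your proposal follows the paper's proof essentially step for step. The paper also gets $\mathcal D[U]=0$ by applying the Lyapunov identity \eqref{energy-dissipation-rescal} to the constant trajectory $W(\cdot,s)=U$; it asserts this without the justification you flag as the main obstacle. It then derives $\log U+|\xi|^2/4+K*U=c$ and pairs $h=U_1-U_2$ with the difference of these relations, using the monotonicity of $\log$ and Lemma~\ref{lem:fourier inequality}. The only differences are minor: you obtain $U_j>0$ beforehand from the strong maximum principle, and you check finiteness of the integrals in Step~2 explicitly. The paper instead works on a connected component of $\{U>0\}$ and rules out a boundary point because $\log U\to-\infty$ there while $c_0-|\xi|^2/4-K*U$ stays finite, and it leaves finiteness implicit.
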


\begin{proof}
Let \(U\) be a weak solution of \eqref{eq:stationary-zero-flux}
belonging to the class \eqref{U1U2}. We first show that \(U\) is
continuous.
Combining the estimates from Lemma \ref{kernel-estimate} with the H\"older inequality, 
 for every finite \(p\), we obtain
\(
U(\nabla K*U)\in L^p(\mathbb R^d)
\)
and likewise
\(
\xi U\in L^p_{\mathrm{loc}}(\mathbb R^d).
\)
Equation \eqref{eq:stationary-zero-flux} can therefore be written as
\[
\Delta U
=
-\nabla\cdot
\left(
\frac{\xi}{2}U+U(\nabla K*U)
\right),
\]
where the vector field on the right-hand side belongs to
\(L^p_{\mathrm{loc}}(\mathbb R^d)\). Standard local elliptic
regularity yields
\(
U\in W^{1,p}_{\mathrm{loc}}(\mathbb R^d)
\)
for every $p<\infty$.
Choosing  \(p>d\) and applying the Morrey embedding, we conclude that $U$ is H\"older continuous.
 In particular, \(U\) is continuous.

We shall also use the fact that \(K*U\) is continuous. Indeed, the
singularity of \(\log|x|\) is locally integrable, while the finite
second moment implies
\[
\int_{\mathbb R^d}
\log(1+|x|^2)U(x)\,\mathrm{d}x<\infty.
\]
Splitting the convolution into a neighborhood of the singularity
and its complement gives
\(
K*U\in C(\mathbb R^d).
\)

A stationary solution \(U\) defines the constant trajectory
\(W(\cdot,s)=U\) of equation \eqref{eq:FP}. Since \(U\in \mathcal{X}_M\) and
belongs to all finite \(L^p\)-spaces, the  identity
\eqref{energy-dissipation-rescal} applies and gives
\begin{equation}\label{eq:D-U-zero}
\mathcal D[U]=0.
\end{equation}

Set
\[
\Omega:=\{\xi\in\mathbb R^d:U(\xi)>0\}.
\]
Since \(U\) is continuous, \(\Omega\) is open. It is nonempty because
\(
\int_{\mathbb R^d}U(\xi)\,\mathrm{d}\xi=M>0.
\)
Let \(\Omega_0\) be a connected component of \(\Omega\).
By relation \eqref{eq:D-U-zero}, it follows that 
\[
\nabla
\left(
\log U+\frac{|\xi|^2}{4}+K*U
\right)
=0
\qquad\text{a.e. in }\Omega_0.
\]
Therefore, there exists \(c_0\in\mathbb R\) such that
\begin{equation}\label{eq:positivity for U}
\log U(\xi)
+\frac{|\xi|^2}{4}
+(K*U)(\xi)
=
c_0
\qquad\text{for }\xi\in\Omega_0.
\end{equation}
The identity holds pointwise because the terms involved are
continuous on \(\Omega_0\).

We claim that \(\Omega_0=\mathbb R^d\). Suppose otherwise. Then there
exists
\(
\xi_0\in\partial\Omega_0.
\)
Choose \(\xi_n\in\Omega_0\) such that \(\xi_n\to\xi_0\). Since \(U\)
is continuous and \(\xi_0\notin\Omega_0\),
\(
U(\xi_n)\longrightarrow U(\xi_0)=0,
\)
and hence
\(
\log U(\xi_n)\longrightarrow-\infty.
\)
On the other hand, relation \eqref{eq:positivity for U} gives
\[
\log U(\xi_n)
=
c_0-\frac{|\xi_n|^2}{4}-(K*U)(\xi_n).
\]
Since \(K*U\) is continuous, the right-hand side has a finite limit
as \(n\to\infty\), which is a contradiction. Thus
\(
\Omega_0=\mathbb R^d;
\)
 consequently,
\(
U(\xi)>0
\)
for every $\xi\in\R^d$,
and there exists \(c\in\R\) such that
\begin{equation}\label{eq:EL-stationary}
\log U(\xi)
+\frac{|\xi|^2}{4}
+(K*U)(\xi)
=
c
\qquad\text{for every }\xi\in\mathbb R^d.
\end{equation}

Let \(U_1\) and \(U_2\) satisfy the assumptions of the theorem.
We have already proved that
 there exist constants \(c_1,c_2\in\mathbb R\) such that
\[
\log U_i
+\frac{|\xi|^2}{4}
+K*U_i
=
c_i,
\qquad i=1,2.
\]
Since \(U_1\) and \(U_2\) have the same mass, the function
\[
h:=U_1-U_2
\quad \text{satisfies}\quad 
\int_{\mathbb R^d}h(\xi)\,\mathrm{d}\xi=0.
\]
Subtracting the two identities for $U_i$ gives
\begin{equation}\label{eq:difference-EL}
\log U_1-\log U_2+K*h=c_1-c_2.
\end{equation}
Multiplying this equation by \(h\), integrating over $\R^d$, using  zero mass of $h$, and  
 \(K(x)=-\log|x|\),
we
obtain
\begin{equation}\label{eq:key-identity-uniqueness-log}
\int_{\mathbb R^d}
(U_1-U_2)(\log U_1-\log U_2)\,\mathrm{d}\xi
=
\iint_{\mathbb R^d\times\mathbb R^d}
\log|\xi-\eta|\,h(\xi)h(\eta)\,\mathrm{d}\eta\,\mathrm{d}\xi.
\end{equation}
The function $h$ satisfies 
 the assumptions of Lemma~\ref{lem:fourier inequality}, thus the right-hand side of equation \eqref{eq:key-identity-uniqueness-log}
 is non-positive.
On the other hand, since \(r\mapsto\log r\) is strictly increasing
on \((0,\infty)\),
 the left-hand side of equation
\eqref{eq:key-identity-uniqueness-log} is nonnegative.
This is possible only if 
\[
\int_{\mathbb R^d}
(U_1-U_2)(\log U_1-\log U_2)\,\mathrm{d}\xi=0.
\]
Strict monotonicity of the logarithm implies
\(
U_1(\xi)=U_2(\xi)
\)
a.e.~on $\R^d$.
Since both functions are continuous, they agree everywhere in
\(\R^d\). 
\end{proof}


\subsection{Self-similar large time behavior}

Using the tools introduced in the previous subsection, we are now in a position to obtain the large-time self-similar behavior of solutions to problem \eqref{eq:main}--\eqref{ini:L1Linf} with the critical interaction kernel.

\begin{proof}[Proof of Theorem~\ref{main thm:self-similar}]
We begin the proof by applying the LaSalle invariance principle recalled in Remark~\ref{LaSalle} to the dynamical system generated by equation \eqref{eq:FP} in the metric space $\mathcal{X}_M$ defined in \eqref{XM:metric-space}. As a consequence of Lemma~\ref{lemma:stationaly for W} and Theorem~\ref{thm:uniqueness-stationary-profile}, combined with the LaSalle principle, we obtain that the solution $W(s)$ with mass $M > 0$ to equation \eqref{eq:FP} converges in $L^1(\mathbb{R}^d)$ toward the unique stationary solution $\Phi_M$ with mass $M$ belonging to class \eqref{domain:XM}; namely,
\begin{equation}
    \|W(s)-\Phi_M\|_1 \to 0 \qquad \text{as } s \to \infty.
\end{equation}
Combining this fact with the uniform bound for $L^r$-norms in \eqref{eq1:estimate for W} and the H\"older inequality, we obtain
\begin{equation}\label{convergence:W:Lp}
    \|W(s)-\Phi_M\|_p \to 0 \qquad \text{as } s \to \infty
\end{equation}
for each $p \in [1,\infty)$. Using the uniform gradient estimate in \eqref{eq1:estimate for W} for $p > d$ and the standard 
estimate of the 
$L^\infty$-norm by the Sobolev norm, we immediately obtain the convergence \eqref{convergence:W:Lp} for $p = \infty$ as well.

We now translate this convergence result to solutions to equation \eqref{eq:main} using the scaling relations \eqref{eq:def-W} and \eqref{eq:inverse-W}.
Define
\[
    U_M(x,t)
    :=
    t^{-\frac d2}\Phi_M\left(\frac{x}{\sqrt t}\right).
\]
Then, by the change of variables \(x=\sqrt t\,\xi\) and the convergence \eqref{convergence:W:Lp}, we obtain
\begin{align}\label{proof1:main thm self similar}
    t^{\frac d2(1-\frac1p)}
    \|u(t)-U_M(t)\|_{p}
    =
    \left\|
    W(\cdot,\log t)-\Phi_M
    \right\|_{p}
    \to0\quad\text{as } t \to \infty
\end{align}
for every \(p\in[1,\infty]\).
\end{proof}


\section*{Declarations}

\textbf{Data Availability.}
Data sharing is not applicable to this article since no datasets were generated or analyzed in this study.

\textbf{Conflict of interest.}
The authors declare that there are no conflicts of interest.


\bibliographystyle{siam}
\bibliography{biblio}


\end{document}